\definecolor{labelkey}{rgb}{0,0.08,0.45}
\definecolor{refkey}{rgb}{0,0.6,0.0}
\definecolor{Brown}{rgb}{0.45,0.0,0.05}
\definecolor{lime}{rgb}{0.00,0.8,0.0}
\definecolor{lblue}{rgb}{0.5,0.5,0.99}
\providecommand{\xso}{x_1^*}
\providecommand{\xst}{x_2^*}
\newcommand{\To}{\ensuremath{\rightrightarrows}}
\newcommand{\scal}[2]{\langle{{#1},{#2}}\rangle}
\newcommand{\RR}{\ensuremath{\mathbb R}}
\newcommand{\RX}{\ensuremath{\,\left]-\infty,+\infty\right]}}
\newcommand{\NN}{\ensuremath{\mathbb N}}
\newcommand{\menge}[2]{\big\{{#1} \mid {#2}\big\}}
\newcommand{\dom}{\ensuremath{\operatorname{dom}}}
\newcommand{\closu}{\ensuremath{\operatorname{cl}}}
\newcommand{\gra}{\ensuremath{\operatorname{gra}}}
\newcommand{\inte}{\ensuremath{\operatorname{int}}}
\newcommand{\reli}{\ensuremath{\operatorname{ri}}}
\newcommand{\aff}{\ensuremath{\operatorname{aff}}}
\newcommand{\suchthat}{\ensuremath{\;|\;}}
\newcommand{\ran}{\ensuremath{\operatorname{ran}}}
\newcommand{\rec}{\ensuremath{\operatorname{rec}}}
\newcommand{\conv}{\ensuremath{\operatorname{conv}}}
\renewcommand{\iff}{\ensuremath{\Leftrightarrow}}
\renewcommand{\phi}{\ensuremath{\varphi}}
\newcommand{\bx}{\ensuremath{\mathbf{x}}}
\providecommand{\abs}[1]{\lvert#1\rvert}
\providecommand{\stb}[1]{\left\{#1\right\}}
\providecommand{\LA}{\Leftarrow}
\providecommand{\RA}{\Rightarrow}
\providecommand{\grad}{\nabla}
\providecommand{\eps}{\epsilon}
\providecommand{\lam}{\lambda}
\providecommand{\RR}{\mathbb{R}}
\providecommand{\aff}{\operatorname{aff}}
\providecommand{\conv}{\operatorname{conv}}
\providecommand{\ran}{\operatorname{ran}}
\providecommand{\intr}{\operatorname{int}}
\providecommand{\dom}{\operatorname{dom}}
\providecommand{\gra}{\operatorname{gra}}
\providecommand{\fady}{\varnothing}
\providecommand{\rras}{\rightrightarrows}
\providecommand{\To}{\rightrightarrows}
\providecommand{\NN}{\mathbb{N}}
\providecommand{\ran}{\operatorname{ran}}
\providecommand{\rec}{\operatorname{rec}}
\providecommand{\core}{\operatorname{core}}
\providecommand{\pt}{{\partial}}
\providecommand{\spn}{\operatorname{span}}
\providecommand{\rra}{\rightrightarrows}
\providecommand{\fady}{\varnothing}
\providecommand{\ri}{\operatorname{ri}}
\providecommand{\RR}{\mathbb{R}}
\providecommand{\NN}{\mathbb{N}}
\newtheorem{theorem}{Theorem}[section]
\newtheorem{lemma}[theorem]{Lemma}
\newtheorem{lem}[theorem]{Lemma}
\newtheorem{corollary}[theorem]{Corollary}
\newtheorem{cor}[theorem]{Corollary}
\newtheorem{proposition}[theorem]{Proposition}
\newtheorem{prop}[theorem]{Proposition}
\newtheorem{definition}[theorem]{Definition}
\newtheorem{thm}[theorem]{Theorem}
\theoremstyle{plain}{\theorembodyfont{\rmfamily}
}
\theoremstyle{plain}{\theorembodyfont{\rmfamily}
}
\theoremstyle{plain}{\theorembodyfont{\rmfamily}
}
\theoremstyle{plain}{\theorembodyfont{\rmfamily}
\newtheorem{example}[theorem]{Example}}
\newtheorem{fact}[theorem]{Fact}
\theoremstyle{plain}{\theorembodyfont{\rmfamily}
\newtheorem{remark}[theorem]{Remark}}
\newcommand{\pluss}{{\hskip1pt \raise1pt\vbox{\hrule width6pt \vskip1pt
\hrule width6pt}\kern-4pt{\lower1pt\hbox{\vrule height6pt \kern1pt\vrule
height6pt}}\hskip5pt}}
\begin{document}

\title{{\sffamily Nearly convex sets: fine properties and domains or ranges of subdifferentials of convex functions}}

\author{
Sarah M.\ Moffat\thanks{Mathematics, Irving K.\ Barber School, University
of British Columbia Okanagan, Kelowna, British Columbia V1V 1V7,
Canada. E-mail: \texttt{sarah.moffat@ubc.ca}.},
Walaa M.\ Moursi\thanks{Mathematics, Irving K.\ Barber School, University
of British Columbia Okanagan, Kelowna, British Columbia V1V 1V7,
Canada. E-mail: \texttt{walaa.moursi@ubc.ca}.},
and
Xianfu Wang\thanks{Mathematics, Irving K.\ Barber School, University
of British Columbia Okanagan, Kelowna, British Columbia V1V 1V7,
Canada. E-mail: \texttt{shawn.wang@ubc.ca}.}
\\
\\
Dedicated to R.T. Rockafellar on the occasion of his 80th birthday.
}

\date{\today}

\maketitle


\begin{abstract} \noindent
Nearly convex sets play important roles in convex analysis, optimization and theory of monotone operators.
We give a systematic study of nearly convex sets, and construct examples
of subdifferentials of lower semicontinuous convex functions whose domain
or ranges are nonconvex.
\end{abstract}

\noindent {\bfseries 2000 Mathematics Subject Classification:}
Primary 52A41, 26A51, 47H05; Secondary 47H04, 52A30, 52A25.

\noindent {\bfseries Keywords:} Maximally monotone operator, nearly convex set, nearly equal, relative interior, recession cone,
subdifferential with nonconvex domain or range.

\section{Introduction}
In 1960s Minty and Rockafellar coined nearly convex sets \cite{minty,Rockvirtual}.
Being a generalization of convex sets, the notion of near convexity or almost convexity has been gaining popularity in the optimization community, see \cite{bmw-nenc, Bot2006, Bot2007, Bot2008, Frenk}. This can be attributed to the applications of generalized convexity in economics problems, see for example, \cite{John, Martinez}.
One reason to study nearly convex sets is that
for a proper lower semicontinuous convex function
its subdifferential domain
is
always nearly convex \cite[Theorem 23.4, Theorem 6.1]{Rock70}, and the same is true for the domain of each maximally monotone operator \cite[Theorem 12.41]{Rock98}. Maximally monotone operators are extensively studies recently \cite{bartz07, bbw2007, bwy2012,
BC2011, Si2}. Another reason is that to study possibly nonconvex functions, a first endeavor perhaps should be to study functions
whose epigraphs are nearly convex, see, e.g., \cite{Bot2008}.

All these motivate our systematic study of nearly convex sets.
Some properties of nearly convex sets have been partially studied
in \cite{bmw-nenc, Bot2006, Bot2007,Bot2008} from different perspectives. The purpose of this paper is to
give new proofs to some known results, provide further characterizations, and extend known results on calculus, relative interiors,
recession cones, and applications.
Although nearly convex sets need not be convex, many results on
convex sets do extend.
We also construct proper lower semicontinuous convex functions whose subdifferential mappings have domains
being neither closed nor open; or highly nonconvex.

We remark that
nearly convex was called \emph{almost convex} in \cite{Bot2006, Bot2007, Bot2008}. Here, we adopt the term \emph{nearly convex}
rather than \emph{almost convex} because of the relationship with nearly equal sets which was noted in \cite{bmw-nenc}.
Note that this definition of nearly convex does not coincide with the one provided in \cite[Definition 2]{Bot2006} and \cite{Bot2007},
where nearly convex is a generalization of midpoint convexity.

The remainder of the paper is organized as follows. Some basic notations and facts about convex sets and nearly convex sets
are given in Section~\ref{s:facts}.
Section~\ref{s:characterization:basic} gives new characterizations of nearly convex sets. In Section~\ref{s:calculus:interior},
we give calculus of nearly convex sets and relative interiors.  In Section~\ref{s:recession:cone}, we study recessions
of nearly convex sets.
Section~\ref{s:application:max:further} is devoted to  apply results in Section~\ref{s:calculus:interior}
and Section~\ref{s:recession:cone} to study maximality of sum of several maximally monotone operators and closedness of nearly convex sets under a linear mapping.  In Section~\ref{s:range:domain}, we construct examples of proper lower semicontinuous
convex functions with prescribed nearly convex sets being their subdifferential domain. As early as 1970s, Rockafellar
provided a convex function whose subdifferential domain is not convex \cite{Rock70}. We give a detailed analysis of
his classical example and use it to generate new examples with pathological subdifferential domains.  Open problems appear in Section~\ref{s:openproblems}. Appendix A contains some proofs of Section~\ref{s:range:domain}.

\section{Preliminaries}\label{s:facts}
\subsection{Notation and terminology}
Throughout this paper, we work in the Euclidean space $\RR^n$ with norm $\|\cdot\|$ and inner product $\langle \cdot, \cdot\rangle$.
For a set $C \subseteq \RR^n$ let $\closu{C}$ denote the closure of $C$, and $\aff C$ the \emph{affine hull} of $C$; that is, the smallest affine set containing $C$.
The key object we shall study is:
\begin{definition}[near convexity]
A set $E \subseteq \RR^n$ is \emph{nearly convex} if there exists a convex set $C \subseteq \RR^n$ such that
$$C \subseteq E \subseteq \closu{C}.$$
\end{definition}
Obviously, every convex set is nearly convex, but they are many nearly convex sets which are not convex. See Figure~\ref{fig:c:set} for two nearly convex sets.
\begin{figure}[H]
\begin{center}
\begin{tabular}{c c c}
\includegraphics[scale=0.5]{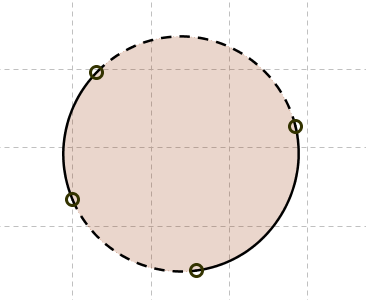}    & &
\includegraphics[scale=0.5]{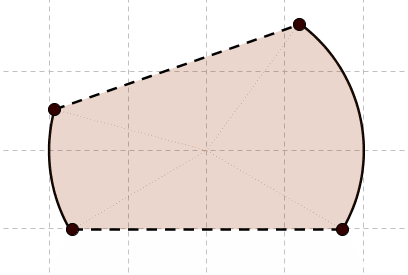}   \\
\end{tabular}
\end{center}
 \caption{A \texttt{GeoGebra} \cite{geogebra} snapshot.
Left: Neither open nor closed convex set. Right: Nearly convex but not convex set}
\label{fig:c:set}
\end{figure}
Note that nearly convex sets do not have nice algebra as convex sets do \cite[Section 3]{Rock70}, as
the following two simple examples illustrate.
\begin{example} The nearly convex set  $C\subseteq \RR^2$ given by
$$C:=\menge{(x_{1},x_{2})}{-1<x_{1}<1, x_{2}>0}\cup\{(-1,0), (1,0)\}.$$
has
$2C\neq C+C$, since
\begin{align*}
2C & =\menge{(x_{1},x_{2})}{-2<x_{1}<2, x_{2}>0}\cup\{(-2,0), (2,0)\},\\
C+C & =2C\cup{(0,0)}.
\end{align*}
On the contrary, $2C=C+C$ whenever $C$ is a convex set \cite[Theorem 3.2]{Rock70}.
\end{example}
\begin{example}
Define
$$E_{1}:=\menge{(x_{1},x_{2})}{x_{1}\geq 0, x_{2}\in\RR}\setminus\menge{(0,x_{2})}{|x_{2}|<1},$$
$$E_{2}:=\menge{(x_{1},x_{2})}{x_{1}\leq 0, x_{2}\in\RR}\setminus\menge{(0,x_{2})}{|x_{2}|<1}.$$
The set $E_{1}\cap E_{2}=\menge{(0,x_{2})}{|x_{2}|\geq 1}$ is not nearly convex.
On the contrary, $E_{1}\cap E_{2}$ is convex if both $E_{1}, E_{2}$ are convex.
\end{example}

Let $B(x,\varepsilon)\subset\RR^n$ be the closed ball with radius $\varepsilon>0$ and centered at $x$, and let
$I$ be an index set
$I:=\{1,2,\dots,m\}$
for some integer $m$.
We use $\conv C$ for the convex hull of $C$. The interior of $C$ is $\inte C$, the core is $$\core C:=\{ x \in C \suchthat (\forall y \in \RR^n)(\exists \varepsilon > 0) \left[x-\varepsilon y, x+\varepsilon y\right] \subseteq C\},$$ and the relative interior is $$\ri C:= \{ x \in \aff C \suchthat \exists \varepsilon>0, (x+\varepsilon B(0,1)) \cap (\aff C) \subseteq C\}.$$
The recession cone of $C$ is
\begin{equation}\label{eq:rec:d}
\rec C := \{ y \in \RR^n \suchthat (\forall \lam \geq 0)~\lam y+C\subseteq C\}.
\end{equation}
The lineality space of $C$ is the largest subspace contained in $\rec C$, see \cite[page 65]{Rock70} for more on lineality spaces.
We denote the projection operator onto the set $C\subseteq \RR^n$ by $P_C$ and the normal cone operator by $N_C$.

For a set-valued mapping $A: \RR^n \To \RR^n$, the domain is $\dom A := \{ x\in \RR^n \suchthat Ax \neq \varnothing\}$, the range is $\ran A := \bigcup_{x\in \RR^n} Ax,$ and the graph is $\gra A :=\{(x,u) \in \RR^n \times \RR^n \suchthat u \in Ax \}.$ $A$ is monotone if $(\forall (x,u) \in \gra A)(\forall (y,v)\in \gra A) \quad \scal{x-y}{u-v} \geq 0$, and maximally monotone
if there exists no monotone operator $B$ such that $\gra A$ is a proper subset of $\gra B$.

\subsection{Auxiliary results on convex sets}
Properties of convex sets play a prominent role in the paper, we need to review some key results.
\begin{fact}[Rockafellar] \label{f:ri}
Let $C$ and $D$ be convex subsets of $\RR^n$,
and let $\lambda \in \RR$.
Then the following hold:
\begin{enumerate}
\item
\label{f:ri:i--}
$\reli C$ and $\closu{C}$ are convex.
\item
\label{f:ri:i-}
$C\neq\varnothing$ $\Rightarrow$ $\reli C \neq\varnothing$.
\item
\label{f:ri:i}
$\closu{(\reli C)} = \closu{C}$.
\item
\label{f:ri:iii}
$\reli C = \reli(\closu{C})$.
\item
\label{f:ri:ii}
$\aff(\reli C) = \aff C = \aff(\closu{C})$.
\item
\label{f:ri:vi}
$\reli C = \reli D$
$\Leftrightarrow$
$\closu{C}= \closu{D}$
$\Leftrightarrow$
$\reli C \subseteq D \subseteq \closu{C}$.
\item \label{f:ri:scal} $\reli \lambda C=\lambda \reli C$.
\item \label{f:ri:sum}
$\reli(C+D)=\reli C+\reli D.$
\end{enumerate}
\end{fact}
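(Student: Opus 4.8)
The plan is to derive the entire list from one geometric lemma — the \emph{line segment principle}: if $C\subseteq\RR^n$ is convex, $x\in\reli C$, and $y\in\closu C$, then $(1-\lambda)x+\lambda y\in\reli C$ for every $\lambda\in[0,1[$ — after first disposing of the nonemptiness statement \ref{f:ri:i-}. For the latter I would pass to the affine hull: translating so that $0\in C$ and restricting to the linear subspace $\spn(\aff C)$, one has $\reli C=\inte C$ relative to that subspace, and choosing a maximal affinely independent family $x_{0},\dots,x_{d}\in C$ (which spans $\aff C$ by maximality), the barycenter $\tfrac{1}{d+1}\sum_{i}x_{i}$ lies in the relative interior of the simplex $\conv\{x_{0},\dots,x_{d}\}\subseteq C$, hence in $\reli C$. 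Convexity of $\closu C$ in \ref{f:ri:i--} is the routine fact that closures of convex sets are convex, and convexity of $\reli C$ is the line segment principle with both endpoints in $\reli C$. The line segment principle itself I would prove by reducing to $\aff C=\RR^n$ and establishing $\lambda\closu C+(1-\lambda)\inte C\subseteq\inte C$: a small ball around $(1-\lambda)x+\lambda y$ is covered by convex combinations, with coefficient $1-\lambda$, of a ball around $x$ contained in $C$ with points of $C$ arbitrarily near $y$, the slack being absorbed because $x$ is interior. This single estimate is the one genuinely geometric step.

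Everything else is bookkeeping on top of it. For \ref{f:ri:iii}: $\closu(\reli C)\subseteq\closu C$ is trivial, and conversely each $y\in\closu C$ is $\lim_{\lambda\uparrow1}\big((1-\lambda)x+\lambda y\big)$ with the approximants in $\reli C$ for a fixed $x\in\reli C$. For \ref{f:ri:ii}: $\reli C\subseteq C\subseteq\closu C\subseteq\aff C$ (affine subsets of $\RR^n$ being closed) gives $\aff(\reli C)\subseteq\aff C=\aff(\closu C)$, while the reverse inclusion follows after reducing to $\aff C=\RR^n$, where $\reli C=\inte C$ is nonempty and open, hence affinely spans $\RR^n$. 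For \ref{f:ri:i}: \ref{f:ri:ii} shows $C$ and $\closu C$ share an affine hull; $\reli C\subseteq\reli(\closu C)$ is immediate from the definition, and $\reli(\closu C)\subseteq\reli C$ comes from applying the line segment principle to $\closu C$ and then using \ref{f:ri:iii} to land inside $\reli C$. Then \ref{f:ri:vi} is formal: $\closu C=\closu D$ forces $\reli C=\reli(\closu C)=\reli(\closu D)=\reli D$ via \ref{f:ri:i}; $\reli C=\reli D$ forces $\closu C=\closu(\reli C)=\closu(\reli D)=\closu D$ via \ref{f:ri:iii}; and $\reli C\subseteq D\subseteq\closu C$ gives, on taking closures and invoking \ref{f:ri:iii}, $\closu D=\closu C$, which closes the chain of equivalences. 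The scaling rule \ref{f:ri:scal} follows because for $\lambda\neq0$ the map $z\mapsto\lambda z$ is a homeomorphism of $\RR^n$ taking $\aff C$ onto $\aff(\lambda C)$, hence $\reli C$ onto $\reli(\lambda C)$; for $\lambda=0$ both sides equal $\{0\}$ (or $\varnothing$).

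The summation rule \ref{f:ri:sum} is the one item I expect to cost real work. I would deduce it from the commutation of relative interior with linear images: for a linear $L\colon\RR^{m}\to\RR^{n}$ and convex $S\subseteq\RR^{m}$, $\reli(L(S))=L(\reli S)$, applied to the addition map $L(x,y)=x+y$ on $S=C\times D$ together with the product identity $\reli(C\times D)=\reli C\times\reli D$ (which is straightforward from the definition). In the linear-image statement, $L(\reli S)\subseteq\reli(L(S))$ again follows from the line segment principle and the density statement \ref{f:ri:iii}, while the reverse inclusion requires a pull-back argument: given $z\in\reli(L(S))$ and any $w\in L(\reli S)$, one pushes the segment from $w$ through $z$ slightly past $z$ while staying in $L(S)$, lifts the two endpoints to points of $\reli S$ and $\closu S$ respectively, and applies the line segment principle inside $S$ to conclude $z\in L(\reli S)$. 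Combining, $\reli(C+D)=L(\reli(C\times D))=L(\reli C\times\reli D)=\reli C+\reli D$. The main obstacle throughout is really the single inclusion $\lambda\closu C+(1-\lambda)\inte C\subseteq\inte C$ underlying the line segment principle and, for \ref{f:ri:sum}, the pull-back half of the linear-image identity; once those are in hand the remaining items are short formal deductions.
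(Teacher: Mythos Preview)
Your argument is correct and self-contained, whereas the paper does not actually prove this Fact: each item is simply referenced to the relevant result in Rockafellar's \emph{Convex Analysis} (Theorems~6.2 and~6.3, Corollaries~6.3.1, 6.6.1, 6.6.2). Your route---establishing the line segment principle first, deriving \ref{f:ri:i--}--\ref{f:ri:vi} from it, and obtaining \ref{f:ri:sum} via the linear-image identity $\reli(L(S))=L(\reli S)$ applied to the addition map on $C\times D$---is exactly the development in those sections of Rockafellar, so you have essentially reconstructed the cited proofs rather than produced an alternative approach.

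One minor bookkeeping slip: you have interchanged the labels \ref{f:ri:i} and \ref{f:ri:iii}. In the statement, \ref{f:ri:i} is $\closu(\reli C)=\closu C$ and \ref{f:ri:iii} is $\reli C=\reli(\closu C)$, but the arguments you attach to these two labels establish the respective other identity (and your internal cross-references to them in the proofs of \ref{f:ri:vi} and of the linear-image inclusion are consistent with this swap). The mathematics is unaffected; just relabel.
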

\begin{proof}
\ref{f:ri:i--}\&\ref{f:ri:i-}: See \cite[Theorem~6.2]{Rock70}.
\ref{f:ri:i}\&\ref{f:ri:iii}:
See \cite[Theorem~6.3]{Rock70}.
\ref{f:ri:ii}: See \cite[Theorem~6.2]{Rock70}.
\ref{f:ri:vi}:
See \cite[Corollary~6.3.1]{Rock70}.
\ref{f:ri:scal}:
See \cite[Corollary~6.6.1]{Rock70}.
\ref{f:ri:sum}:
See \cite[Corollary~6.6.2]{Rock70}.
\end{proof}

\begin{fact}\emph{\cite[Theorem 6.5]{Rock70}}
\label{f:riintersec}
Let $C_i$ be a convex set in $\RR^n$ for $i=1, \ldots, m$  such that $\bigcap\limits_{i=1}^m \reli C_i \neq \varnothing$. Then
$$\closu{ \big(\bigcap\limits_{i=1}^m C_i\big)} = \bigcap\limits_{i=1}^m \closu{C_i},$$
and
$$\reli \big(\bigcap\limits_{i=1}^m C_i\big) = \bigcap\limits_{i=1}^m \reli C_i.$$
\end{fact}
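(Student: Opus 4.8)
The plan is to reduce the statement to the case $m=2$ by induction on $m$, and then to handle two convex sets by means of the classical line-segment principle for convex sets.

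\textbf{Reduction.} For $m=1$ there is nothing to prove. Assume the statement for $m-1$ sets and suppose $\bigcap_{i=1}^{m}\ri C_{i}\neq\varnothing$. Put $D:=\bigcap_{i=1}^{m-1}C_{i}$, a convex set. Since $\bigcap_{i=1}^{m-1}\ri C_{i}\supseteq\bigcap_{i=1}^{m}\ri C_{i}\neq\varnothing$, the inductive hypothesis gives $\ri D=\bigcap_{i=1}^{m-1}\ri C_{i}$ and $\closu{D}=\bigcap_{i=1}^{m-1}\closu{C_{i}}$; in particular $\ri D\cap\ri C_{m}=\bigcap_{i=1}^{m}\ri C_{i}\neq\varnothing$. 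Applying the two-set case to $D$ and $C_{m}$, and using $\bigcap_{i=1}^{m}C_{i}=D\cap C_{m}$, completes the step. So it remains to prove: \emph{if $C_{1},C_{2}$ are convex with $\ri C_{1}\cap\ri C_{2}\neq\varnothing$, then $\closu{(C_{1}\cap C_{2})}=\closu{C_{1}}\cap\closu{C_{2}}$ and $\ri(C_{1}\cap C_{2})=\ri C_{1}\cap\ri C_{2}$.}

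\textbf{Two convex sets.} The workhorse is the \emph{line-segment principle}: for convex $C$, if $x\in\ri C$ and $y\in\closu{C}$ then $(1-\lambda)x+\lambda y\in\ri C$ for every $\lambda\in\left[0,1\right[$; this is classical and follows directly from the definitions of $\ri$ and $\closu{}$, using that affine hulls are closed so that $\closu{C}\subseteq\aff C$. Fix $x_{0}\in\ri C_{1}\cap\ri C_{2}$ and set $A:=C_{1}\cap C_{2}$. The inclusion $\closu{A}\subseteq\closu{C_{1}}\cap\closu{C_{2}}$ is trivial; for the reverse, given $y\in\closu{C_{1}}\cap\closu{C_{2}}$, the line-segment principle applied in each $C_{i}$ puts $(1-\lambda)x_{0}+\lambda y$ in $\ri C_{1}\cap\ri C_{2}\subseteq A$ for all $\lambda\in\left[0,1\right[$, and letting $\lambda\uparrow1$ gives $y\in\closu{A}$; hence $\closu{A}=\closu{C_{1}}\cap\closu{C_{2}}$. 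For $\ri C_{1}\cap\ri C_{2}\subseteq\ri A$: given $x$ in the left-hand set and any $y\in\closu{A}=\closu{C_{1}}\cap\closu{C_{2}}$, in each $C_{i}$ the point $x$ admits a short prolongation $x+\varepsilon_{i}(x-y)\in C_{i}$ (because $x\in\ri C_{i}$ and $y\in\aff C_{i}$), so with $\varepsilon:=\min\{\varepsilon_{1},\varepsilon_{2}\}>0$ convexity yields $x+\varepsilon(x-y)\in A$; by the prolongation characterization of the relative interior this gives $x\in\ri A$.

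\textbf{The remaining inclusion — the main obstacle.} What is left is $\ri A\subseteq\ri C_{1}\cap\ri C_{2}$, and this is where the hypothesis $\ri C_{1}\cap\ri C_{2}\neq\varnothing$ is genuinely needed (the inclusion is false without it, e.g.\ for a square and one of its edges). I expect to argue by contradiction using a supporting hyperplane: if $x\in\ri A$ but, say, $x\notin\ri C_{1}$, then $x$ is a relative boundary point of $C_{1}$, so there is a vector $a$ that is non-constant on $\aff C_{1}$ with $\langle a,\cdot\rangle$ attaining its maximum over $C_{1}$ at $x$; since $x_{0}\in\ri C_{1}$ forces $\langle a,x_{0}\rangle<\langle a,x\rangle$ while $x_{0},x\in A$, the same $a$ is non-constant on $\aff A$ and attains its maximum over $A\subseteq C_{1}$ at $x$, so $x$ lies in the relative boundary of $A$, contradicting $x\in\ri A$. (Alternatively one first replaces each $C_{i}$ by $\closu{C_{i}}$, using $\ri A=\ri\closu{A}$ and $\ri C_{i}=\ri\closu{C_{i}}$ from Fact~\ref{f:ri}\ref{f:ri:iii}, and argues in the closed case.) Assembling the four inclusions gives the two identities for $m=2$, and the induction finishes the proof.
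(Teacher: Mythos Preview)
The paper does not prove this statement: Fact~\ref{f:riintersec} is simply quoted from \cite[Theorem~6.5]{Rock70} as background. So there is no ``paper's own proof'' to compare against; your argument must stand on its own, and it does. The induction to $m=2$, the line-segment principle for the closure identity, and the prolongation characterization for $\ri C_{1}\cap\ri C_{2}\subseteq\ri A$ are all correct and are in fact the route Rockafellar himself takes.

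Two remarks on the final inclusion $\ri A\subseteq\ri C_{1}\cap\ri C_{2}$. Your supporting-hyperplane argument is valid, but it imports a separation theorem that is heavier than needed. A cleaner finish, using only what you have already established plus Fact~\ref{f:ri}\ref{f:ri:iii}, is this: your closure step actually shows the stronger fact that every $y\in\closu{A}$ is a limit of points in $\ri C_{1}\cap\ri C_{2}$, hence $\closu{A}=\closu{(\ri C_{1}\cap\ri C_{2})}$; then
\[
\ri A=\ri(\closu{A})=\ri\bigl(\closu{(\ri C_{1}\cap\ri C_{2})}\bigr)=\ri(\ri C_{1}\cap\ri C_{2})\subseteq \ri C_{1}\cap\ri C_{2},
\]
which avoids hyperplanes entirely. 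Second, it may interest you that the paper later reproves the nearly convex generalization (Corollary~\ref{ncriintersecm}) by an altogether different device: embed via the diagonal map $A\colon x\mapsto(x,\ldots,x)$ into the product $E_{1}\times\cdots\times E_{m}$ and read off the intersection as a linear preimage, so that the result follows from Theorem~\ref{201409121} and Proposition~\ref{p:productset}. That product-space trick trades the two-set analysis and induction for a single application of the inverse-image calculus, at the cost of relying on Fact~\ref{f:convlinearinv}; your direct approach is more self-contained.
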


\begin{fact}\emph{\cite[Theorem 6.1]{Rock70}}
\label{l:access}
Let $C$ be a convex set in $\RR^n$, $x\in \ri C$, and $y\in \closu{C}$. Then
$$[x,y[\subseteq \ri C.$$
\end{fact}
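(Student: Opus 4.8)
The plan is to prove the statement by passing to the affine hull $M := \aff C$, on which relative interior coincides with ordinary interior: by the definition recalled above, $z \in \ri C$ iff $z \in M$ and $(z + \varepsilon B(0,1)) \cap M \subseteq C$ for some $\varepsilon > 0$. Fix $\lzo$, set $z_{\lambda} := \lambda x + (1-\lambda) y$, and note that $[x,y[$ is exactly $\{ z_{\lambda} \suchthat \lzo \}$, so it suffices to prove $z_{\lambda} \in \ri C$. If $\lambda = 1$ then $z_{1} = x \in \ri C$ and there is nothing to do, so assume $\lambda \in \opint{0,1}$. Since $x \in \ri C$, choose $\varepsilon > 0$ with $(x + \varepsilon B(0,1)) \cap M \subseteq C$; since $y \in \closu{C}$ and $\aff(\closu{C}) = \aff C = M$ by Fact~\ref{f:ri}\ref{f:ri:ii}, we have $y \in M$, and for $\delta := \lambda \varepsilon / (2(1-\lambda)) > 0$ there is $y' \in C$ with $\|y - y'\| < \delta$; finally $z_{\lambda} \in M$ as an affine combination of $x, y \in M$.

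Next I would produce a relative ball about $z_{\lambda}$ that lies in $C$. Take any $u \in M$ with $\|u - z_{\lambda}\| < \lambda \varepsilon / 2$ and put $w := \tfrac{1}{\lambda} u - \tfrac{1-\lambda}{\lambda} y'$; the coefficients sum to $1$, so $w$ is an affine combination of $u, y' \in M$ and hence $w \in M$, while $u = \lambda w + (1-\lambda) y'$. A direct computation yields
\[
w - x = \tfrac{1}{\lambda}\bigl( (u - z_{\lambda}) + (1-\lambda)(y - y') \bigr),
\]
so that $\|w - x\| \le \tfrac{1}{\lambda}\bigl( \|u - z_{\lambda}\| + (1-\lambda)\|y - y'\| \bigr) < \tfrac{1}{\lambda}\bigl( \tfrac{\lambda \varepsilon}{2} + (1-\lambda)\delta \bigr) = \varepsilon$. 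Hence $w \in (x + \varepsilon B(0,1)) \cap M \subseteq C$, and $u = \lambda w + (1-\lambda) y'$ is then a convex combination of two points of $C$, so $u \in C$ by convexity of $C$. This shows $(z_{\lambda} + \tfrac{\lambda \varepsilon}{2} B(0,1)) \cap M \subseteq C$, and together with $z_{\lambda} \in M = \aff C$ we conclude $z_{\lambda} \in \ri C$, as required.

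The proof reduces to the elementary convex-combination estimate displayed above; the only delicate points are the affine-hull bookkeeping — ensuring that relative interior genuinely behaves like an interior and that $y$ together with every $z_{\lambda}$ lies in $M$, which is where $\aff(\closu{C}) = \aff C$ enters — and separating out the trivial case $\lambda = 1$. One could alternatively replace $C$ by $\closu{C}$ at the outset using $\ri C = \ri(\closu{C})$ (Fact~\ref{f:ri}\ref{f:ri:iii}), making $y$ an honest element of a closed convex set, but this does not simplify the core computation, so I would present the direct argument.
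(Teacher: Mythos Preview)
Your argument is correct; the only cosmetic slip is that you prove containment for the \emph{open} relative ball of radius $\lambda\varepsilon/2$ but then state it for the closed ball $z_{\lambda}+\tfrac{\lambda\varepsilon}{2}B(0,1)$ --- harmless, since any strictly smaller radius works for the definition of $\ri C$.

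As for comparison: the paper gives no proof of this fact at all --- it is recorded as a citation to \cite[Theorem~6.1]{Rock70} and used as a black box. Your write-up is essentially the classical proof found there: pass to the affine hull, approximate $y\in\closu{C}$ by some $y'\in C$, and use the convex-combination identity $u=\lambda w+(1-\lambda)y'$ to pull a small relative ball around $z_{\lambda}$ back to the known relative ball around $x$. So you have supplied exactly the argument the paper defers to, and there is no methodological difference to discuss.
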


\begin{fact}\emph{\cite[Theorem 6.6]{Rock70}}\label{f:convlinear}
Let $C$ be a convex set in $\RR^n$ and let $A$ be a linear transformation from $\RR^n$ to $\RR^m$. Then
$$\reli (AC) = A (\reli C),$$
and
$$A(\closu{C}) \subseteq \closu{(AC)}.$$
\end{fact}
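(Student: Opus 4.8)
The plan is to prove the two assertions in turn. The inclusion $A(\closu C)\subseteq\closu{(AC)}$ is immediate from the continuity of the linear map $A$: if $x\in\closu C$, pick $x_k\in C$ with $x_k\to x$; then $Ax_k\in AC$ and $Ax_k\to Ax$, so $Ax\in\closu{(AC)}$. (In fact $A(\closu S)\subseteq\closu{(AS)}$ for \emph{every} set $S$, which is all I will use below.) For the equality $\reli(AC)=A(\reli C)$, observe first that both sides are empty when $C=\varnothing$, so assume $C\neq\varnothing$; then $AC$ and $A(\reli C)$ are convex, and $\reli C\neq\varnothing$ by Fact~\ref{f:ri}\ref{f:ri:i-}.

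The first half is a ``closure'' computation. From $\reli C\subseteq C\subseteq\closu{\reli C}$ (Fact~\ref{f:ri}\ref{f:ri:i}), applying $A$ and the continuity inclusion above with $S=\reli C$, we get $A(\reli C)\subseteq AC\subseteq A(\closu{\reli C})\subseteq\closu{(A(\reli C))}$. Taking closures throughout yields $\closu{(AC)}=\closu{(A(\reli C))}$, whence, by Fact~\ref{f:ri}\ref{f:ri:iii} applied to the convex sets $AC$ and $A(\reli C)$, $\reli(AC)=\reli\big(\closu{(AC)}\big)=\reli\big(\closu{(A(\reli C))}\big)=\reli(A(\reli C))$. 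Since $\reli(A(\reli C))\subseteq A(\reli C)$, this already gives the inclusion $\reli(AC)\subseteq A(\reli C)$.

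For the reverse inclusion it remains to show that $A(\reli C)$ is relatively open, i.e. $A(\reli C)\subseteq\reli(A(\reli C))$; combined with the identity $\reli(AC)=\reli(A(\reli C))$ just established, this gives $A(\reli C)\subseteq\reli(AC)$. Here I would invoke the prolongation principle (\cite[Theorem~6.4]{Rock70}, which follows quickly from Fact~\ref{l:access} together with the ball-description of $\reli$): a point $z$ of a convex set $D$ lies in $\reli D$ if and only if for every $u\in D$ there is $\mu>1$ with $\mu z-(\mu-1)u\in D$. The set $\reli C$ is convex (Fact~\ref{f:ri}\ref{f:ri:i--}) and relatively open, because $\reli(\reli C)=\reli(\closu{\reli C})=\reli(\closu C)=\reli C$ by Fact~\ref{f:ri}\ref{f:ri:i} and \ref{f:ri:iii}. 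Now fix $x\in\reli C$ and let $u$ range over $\reli C$; applying the prolongation principle to $x\in\reli(\reli C)$ produces, for each such $u$, some $\mu>1$ with $\mu x-(\mu-1)u\in\reli C$, and applying $A$ gives $\mu(Ax)-(\mu-1)(Au)\in A(\reli C)$. Since every element of $A(\reli C)$ is of the form $Au$ with $u\in\reli C$, the prolongation principle in the reverse direction (for the convex set $A(\reli C)$) yields $Ax\in\reli(A(\reli C))$, as required.

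The only step that needs genuine work beyond the quoted Facts is the prolongation principle together with the consequent stability of relative openness under linear images; everything else is bookkeeping with closures and relative interiors, and I expect this to be the main obstacle. If one prefers to avoid citing \cite[Theorem~6.4]{Rock70}, the relative openness of $A(\reli C)$ can be proved directly by an open-mapping argument: translating so that $0\in C$, the restriction $A|_{\aff C}$ is a surjective linear map onto $\aff(AC)=A(\aff C)$, hence admits a linear bound $\|u\|\le\kappa\|Au\|$ on a complement of its kernel inside $\aff C$; pushing a relative $\varepsilon$-ball around $x$ contained in $\reli C$ through this bound produces a relative $(\varepsilon/\kappa)$-ball around $Ax$ contained in $A(\reli C)$.
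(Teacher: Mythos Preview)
Your proof is correct. Note, however, that the paper does not supply its own argument for this statement: it is recorded as a Fact with a reference to \cite[Theorem~6.6]{Rock70} and no proof is given. So there is nothing to compare your approach against on the paper's side; you have simply filled in what the paper (legitimately) outsourced to Rockafellar. Your route---reducing to $\reli(AC)=\reli(A(\reli C))$ via closures and then showing $A(\reli C)$ is relatively open via the prolongation characterization (\cite[Theorem~6.4]{Rock70})---is essentially the standard one and is how Rockafellar himself proceeds in \cite{Rock70}.
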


\begin{fact}\emph{\cite[Theorem 6.7]{Rock70}} \label{f:convlinearinv}
Let $C$ be a convex set in $\RR^n$ and let $A$ be a linear transformation from $\RR^n$ to $\RR^m$ such that $A^{-1}(\reli C) \neq \varnothing$. Then
$$\reli(A^{-1}C) = A^{-1}(\reli C),$$
and
$$\closu{({A^{-1}C})} = A^{-1}(\closu{C}).$$
\end{fact}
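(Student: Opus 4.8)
The plan is to reduce the statement to Facts~\ref{f:riintersec} and~\ref{f:convlinear} by the standard device of passing to the graph of $A$. Put $G := \gra A = \{(x,Ax) \mid x \in \RR^n\}$, which is a linear subspace of $\RR^n \times \RR^m$, and let $P_1 \colon \RR^n \times \RR^m \to \RR^n \colon (x,y)\mapsto x$ be the coordinate projection; note that $P_1$ is linear and continuous. For every set $D \subseteq \RR^m$ one has
\[
A^{-1}D = P_1\big(G \cap (\RR^n \times D)\big),
\]
because $(x,y) \in G \cap (\RR^n \times D)$ precisely when $y = Ax$ and $Ax \in D$. I will also use the two elementary product identities $\reli(\RR^n \times D) = \RR^n \times \reli D$ and $\closu{(\RR^n \times D)} = \RR^n \times \closu{D}$, which follow at once from the definitions of $\reli$ and of closure on a Cartesian product, together with the fact that a subspace coincides with both its relative interior and its closure, so $\reli G = \closu{G} = G$.

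First I would verify that Fact~\ref{f:riintersec} applies to the convex sets $G$ and $\RR^n \times C$: their relative interiors meet iff $G \cap (\RR^n \times \reli C) \neq \varnothing$, i.e.\ iff some $x$ has $Ax \in \reli C$, i.e.\ iff $A^{-1}(\reli C) \neq \varnothing$, which is exactly the hypothesis. Hence Fact~\ref{f:riintersec} gives
\[
\reli\big(G \cap (\RR^n \times C)\big) = G \cap (\RR^n \times \reli C) \qquad\text{and}\qquad \closu{\big(G \cap (\RR^n \times C)\big)} = G \cap (\RR^n \times \closu{C}).
\]

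Now apply Fact~\ref{f:convlinear} to the convex set $E := G \cap (\RR^n \times C)$ under the linear map $P_1$. From $\reli(P_1E) = P_1(\reli E)$ and the first displayed equality, $\reli(A^{-1}C) = P_1\big(G \cap (\RR^n \times \reli C)\big) = A^{-1}(\reli C)$, which is the first assertion. For the closure, Fact~\ref{f:convlinear} yields only the inclusion $P_1(\closu{E}) \subseteq \closu{(P_1E)}$; together with the second displayed equality this reads $A^{-1}(\closu{C}) \subseteq \closu{(A^{-1}C)}$. The opposite inclusion needs no convexity: $A^{-1}(\closu{C})$ is closed, being the preimage of a closed set under the continuous map $A$, and it contains $A^{-1}C$, hence $\closu{(A^{-1}C)} \subseteq A^{-1}(\closu{C})$. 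This establishes $\closu{(A^{-1}C)} = A^{-1}(\closu{C})$.

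The main thing to be careful about is precisely this asymmetry in Fact~\ref{f:convlinear}: relative interiors transport exactly under linear images, but closures only up to inclusion, so the closure identity cannot be read off from the graph computation alone and must be closed by a separate (easy) topological argument. If one prefers to stay within the convex-analytic toolkit, the inclusion $A^{-1}(\closu{C}) \subseteq \closu{(A^{-1}C)}$ can instead be obtained from $\closu{(A^{-1}C)} = \closu{(\reli(A^{-1}C))} = \closu{(A^{-1}(\reli C))}$ (using Fact~\ref{f:ri}\ref{f:ri:i} and the relative-interior identity just proved) followed by Fact~\ref{l:access}: for $x$ with $Ax \in \closu{C}$ and any fixed $x_0 \in A^{-1}(\reli C)$, each point $(1-t)x_0 + tx$ with $t \in [0,1[$ lies in $A^{-1}(\reli C)$ and tends to $x$ as $t \uparrow 1$.
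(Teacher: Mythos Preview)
Your proof is correct. The paper does not supply its own proof of this fact; it is stated as a citation to \cite[Theorem~6.7]{Rock70} and used as a black box. Your argument via the graph $G=\gra A$, the intersection formula $A^{-1}D=P_1(G\cap(\RR^n\times D))$, and the combination of Facts~\ref{f:riintersec} and~\ref{f:convlinear} is in fact the standard proof (essentially Rockafellar's own), and both your topological and your convex-analytic completions of the closure identity are valid.
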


\begin{fact}\emph{\cite[Theorem~8.1 \& Theorem~8.2]{Rock70}}\label{f:recC}
Let $C$ be a nonempty convex subset in $\RR^n$.
Then $\rec C$ is a convex cone and $0\in \rec C$.
If in addition $C$ is closed then $\rec C$ is closed.
\end{fact}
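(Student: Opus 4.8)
The plan is to verify each of the three assertions directly from the definition \eqref{eq:rec:d} of $\rec C$, invoking the convexity of $C$ only where it is genuinely needed and its closedness only for the final claim. I would begin with the easy observation that $0 \in \rec C$: for every $\lambda \geq 0$ one has $\lambda\cdot 0 + C = C \subseteq C$, so the defining condition is trivially met.

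Next I would show that $\rec C$ is a convex cone by checking stability under nonnegative scalar multiples and under addition. If $y \in \rec C$ and $\alpha \geq 0$, then for every $\lambda \geq 0$ we have $\lambda(\alpha y) + C = (\lambda\alpha) y + C \subseteq C$ since $\lambda \alpha \geq 0$, so $\alpha y \in \rec C$. If $y_1, y_2 \in \rec C$, then for every $\lambda \geq 0$, $\lambda(y_1+y_2)+C = \lambda y_1 + (\lambda y_2 + C) \subseteq \lambda y_1 + C \subseteq C$, so $y_1 + y_2 \in \rec C$; note this step does not even use convexity of $C$. Closure under nonnegative scalings and sums gives convexity of $\rec C$: for $t \in [0,1]$ and $y_1, y_2 \in \rec C$, both $t y_1$ and $(1-t)y_2$ lie in $\rec C$, hence so does $t y_1 + (1-t)y_2$. (Convexity of $C$ can instead be used for a more direct argument: for $x \in C$ and $\lambda \geq 0$, $\lambda(ty_1 + (1-t)y_2) + x = t(\lambda y_1 + x) + (1-t)(\lambda y_2 + x)$ is a convex combination of two points of $C$.)

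For the last statement, assume $C$ is closed and let $(y_k)$ be a sequence in $\rec C$ with $y_k \to y$. Fix $\lambda \geq 0$ and $x \in C$; then $\lambda y_k + x \in C$ for every $k$, and since $\lambda y_k + x \to \lambda y + x$ and $C$ is closed, $\lambda y + x \in C$. As $x \in C$ and $\lambda \geq 0$ were arbitrary, $\lambda y + C \subseteq C$ for all $\lambda \geq 0$, i.e. $y \in \rec C$, so $\rec C$ is closed. None of these steps is a real obstacle; the only point needing care is to carry the universal quantifier ``$\forall\,\lambda \geq 0$'' from \eqref{eq:rec:d} through each manipulation, and, in the closedness argument, to hold $\lambda$ and $x$ fixed before passing to the limit and invoking closedness of $C$.
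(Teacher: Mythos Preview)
Your argument is correct. The paper does not supply its own proof of this statement; it records it as a \emph{Fact} and simply cites \cite[Theorem~8.1 \& Theorem~8.2]{Rock70}. Your direct verification from the definition~\eqref{eq:rec:d} is a clean self-contained proof, and you correctly observe that with this particular definition the cone and convexity properties of $\rec C$ do not actually require convexity of $C$ (convexity of $C$ is what makes the alternative characterization $\{y: y+C\subseteq C\}$ coincide with~\eqref{eq:rec:d}, as the paper discusses just after this Fact).
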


\begin{fact}\emph{\cite[Theorem~8.3]{Rock70}}\label{f:recC:II}
Let $C$ be a nonempty convex subset in $\RR^n$.
Then $\rec (\ri C)=\rec (\closu{C})$.
\end{fact}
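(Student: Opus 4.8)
The plan is to prove $\rec(\ri C)=\rec(\closu{C})$ by a direct two-inclusion argument straight from the definition~\eqref{eq:rec:d} of the recession cone, the only external ingredients being the line segment principle (Fact~\ref{l:access}), the density statement $\closu{(\ri C)}=\closu{C}$, and the nonemptiness $\ri C\neq\varnothing$ (Fact~\ref{f:ri}).

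First I would show $\rec(\ri C)\subseteq\rec(\closu{C})$: take $y\in\rec(\ri C)$, fix $\lambda\geq 0$ and $z\in\closu{C}=\closu{(\ri C)}$, choose $z_k\in\ri C$ with $z_k\to z$, and note that $\lambda y+z_k\in\ri C\subseteq\closu{C}$, so $\lambda y+z\in\closu{C}$ by closedness of $\closu{C}$; hence $\lambda y+\closu{C}\subseteq\closu{C}$ for all $\lambda\geq 0$, i.e.\ $y\in\rec(\closu{C})$. For the reverse inclusion, take $y\in\rec(\closu{C})$, fix $\lambda\geq 0$ and $x\in\ri C$. Since $x\in\closu{C}$ and $y\in\rec(\closu{C})$, the point $x+(\lambda+1)y$ lies in $\closu{C}$, so Fact~\ref{l:access} applied to $x\in\ri C$ and $x+(\lambda+1)y\in\closu{C}$ gives $[x,\,x+(\lambda+1)y\,[\,\subseteq\ri C$. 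Because $x+\lambda y$ is the point of the segment from $x$ to $x+(\lambda+1)y$ at parameter $\tfrac{\lambda}{\lambda+1}\in[0,1[$, it follows that $x+\lambda y\in\ri C$; as $x\in\ri C$ and $\lambda\geq 0$ were arbitrary, $\lambda y+\ri C\subseteq\ri C$ for all $\lambda\geq 0$, i.e.\ $y\in\rec(\ri C)$.

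The argument is short and I expect no serious obstacle; the one point needing care is in the second inclusion, where the segment endpoint must be pushed strictly past $x+\lambda y$ (using $\lambda+1$ in place of $\lambda$) so that Fact~\ref{l:access}, which only controls the \emph{half-open} segment $[x,\cdot[$, still covers the point of interest. An alternative would be to reduce to the case of closed $C$ via $\ri C=\ri(\closu{C})$ (Fact~\ref{f:ri}\ref{f:ri:iii}) and then invoke the basepoint-independence of recession cones of closed convex sets, but the direct route above avoids needing any such extra fact.
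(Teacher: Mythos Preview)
Your proof is correct. Note, however, that the paper does not give its own proof of this statement: it is recorded as a \emph{Fact} and simply cites \cite[Theorem~8.3]{Rock70}. So there is no in-paper argument to compare against; your two-inclusion argument from the definition~\eqref{eq:rec:d}, using Fact~\ref{f:ri}\ref{f:ri:i} and Fact~\ref{l:access}, is a clean self-contained substitute for the citation. The device of overshooting to $x+(\lambda+1)y$ so that Fact~\ref{l:access} covers $x+\lambda y$ at parameter $\lambda/(\lambda+1)\in[0,1[$ is exactly the right way to handle the half-open segment, and your first inclusion via $\closu{(\ri C)}=\closu{C}$ is standard.
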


\begin{fact}\emph{\cite[Theorem~9.1]{Rock70}}\label{f:recC:CL:I}
Let $C$ be a nonempty convex subset in $\RR^n$
and let $A$ be a linear transformation from $\RR^n$ to $\RR^m$.
Suppose that $(\forall z\in \rec(\closu{C})\setminus\stb{0})$
with $Az=0$ we have that $z$ belongs to the lineality space of
$\closu{C}$. Then
$$\closu{(AC)}=A(\closu{C}),$$
and
$$\rec A(\closu{C})=A[\rec(\closu{C})].$$
\end{fact}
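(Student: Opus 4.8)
Throughout write $D:=\closu{C}$, a nonempty closed convex set. By Fact~\ref{f:convlinear} we always have $A(\closu{C})\subseteq\closu{(AC)}$, and since $AC\subseteq AD$ this gives $\closu{(AC)}\subseteq\closu{(AD)}$; hence the first identity will follow once $AD$ is shown to be closed. My plan is to reduce both parts of the theorem to the case in which $\ker A$ meets the recession cone of the set only at the origin, and to settle that line-free case first.

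\emph{Two lemmas.} Let $E\subseteq\RR^n$ be nonempty closed convex with $\rec E\cap\ker A=\{0\}$. I claim (a) $AE$ is closed, and (b) $\rec(AE)=A(\rec E)$; both are proved by the same normalization argument. For (a): if $Ax_k\to y$ with $x_k\in E$ and $(x_k)$ unbounded, pass to a subsequence with $\|x_k\|\to\infty$ and $x_k/\|x_k\|\to d$; then $\|d\|=1$, $d\in\rec E$ because $E$ is closed and convex, and $Ad=\lim (Ax_k)/\|x_k\|=0$ since $(Ax_k)$ is bounded, contradicting $\rec E\cap\ker A=\{0\}$; so $(x_k)$ has a convergent subsequence whose limit lies in $E$ and maps to $y$. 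For (b): $A(\rec E)\subseteq\rec(AE)$ is immediate from linearity; conversely, given $w\in\rec(AE)$ and any $x_0\in E$, pick $x_k\in E$ with $Ax_k=Ax_0+kw$, and the same blow-up shows $\big(\|x_k-x_0\|/k\big)_k$ is bounded — otherwise a normalized limit lies in $\rec E\cap\ker A\setminus\{0\}$ — so a subsequence of $(x_k-x_0)/k$ tends to some $d$ with $Ad=w$ and $d\in\rec E$ (using closedness of $E$ when $d\ne 0$, and $w=A0$ when $d=0$).

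\emph{Reduction.} Let $L$ be the lineality space of $D$ and set $L_0:=L\cap\ker A$; the hypothesis says exactly that $\rec D\cap\ker A\subseteq L$, whence $\rec D\cap\ker A=L_0$ (a subspace). Fix a subspace $W$ with $\RR^n=L_0\oplus W$. Since $L_0$ is a subspace contained in $\rec D$, one has $D+L_0=D=D-L_0$, so decomposing $x\in D$ along $L_0\oplus W$ gives $x-x_0\in D\cap W$ and hence $D=(D\cap W)+L_0$; as $L_0\subseteq\ker A$ this yields $AD=A(D\cap W)$. One then checks, for the closed convex set $D\cap W$, that $\rec(D\cap W)=\rec D\cap W$ (the nontrivial inclusion via the single-point characterization of recession directions) and that $\rec D=\rec(D\cap W)+L_0$ (using that $\rec D$ is a convex cone, Fact~\ref{f:recC}); the first gives $\rec(D\cap W)\cap\ker A=(\rec D\cap\ker A)\cap W=L_0\cap W=\{0\}$, and the second gives $A(\rec D)=A(\rec(D\cap W))$. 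Now apply the lemmas with $E:=D\cap W$ inside $W$: by (a), $AD=A(D\cap W)$ is closed, so $\closu{(AC)}\subseteq\closu{(AD)}=AD=A(\closu{C})$, which with Fact~\ref{f:convlinear} gives $\closu{(AC)}=A(\closu{C})$; by (b), $\rec(AD)=\rec(A(D\cap W))=A(\rec(D\cap W))=A(\rec D)$, i.e.\ $\rec A(\closu{C})=A[\rec(\closu{C})]$.

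\emph{Main obstacle.} The normalization arguments of the two lemmas are routine once one observes that the line-free hypothesis forces the blow-up directions into $\rec E\cap\ker A=\{0\}$. The delicate part — and the only place the full hypothesis enters — is the reduction: the identities $\rec(D\cap W)=\rec D\cap W$ and $\rec D=\rec(D\cap W)+L_0$ are its technical heart, each elementary for closed convex sets but requiring verification, and it is precisely the assumption that $\rec D\cap\ker A$ lies in the lineality space of $\closu{C}$ that makes $L_0$ a subspace and legitimizes the splitting $D=(D\cap W)+L_0$ on which the whole argument rests.
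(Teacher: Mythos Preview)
The paper does not supply a proof of this statement: it is stated as a \emph{Fact} and attributed to \cite[Theorem~9.1]{Rock70}, so there is no in-paper argument to compare against. Your proof is correct and complete, and it is essentially the standard argument (the one Rockafellar himself gives): split off the subspace $L_0=\rec(\closu{C})\cap\ker A$, which the hypothesis guarantees lies in the lineality space of $\closu{C}$, to reduce to a closed convex set whose recession cone meets $\ker A$ only at the origin, and then handle that line-free case by a routine normalization/compactness argument. The verifications you flag as the ``technical heart'' --- $\rec(D\cap W)=\rec D\cap W$ and $\rec D=\rec(D\cap W)+L_0$ --- are indeed the places where care is needed, and your justifications for them are sound (the second uses that a convex cone $K$ satisfies $K+K=K$ and that $-L_0=L_0$). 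One small point worth making explicit in lemma~(b): when you invoke $w\in\rec(AE)$ to find $x_k\in E$ with $Ax_k=Ax_0+kw$, you are using that $AE$ is already closed by lemma~(a), so that $Ax_0+kw\in AE$ rather than merely in $\closu{AE}$; this is implicit in your ordering of the lemmas but could be said outright.
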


\begin{fact}\emph{\cite[Corollary~9.1.1]{Rock70}}\label{f:recC:Cl:II}
Let $(E_i)_{i\in I}$,  be a family of
nonempty convex subsets in $\RR^n$
satisfying the following condition:
if $(\forall i\in I)( \exists z_i\in \rec(\closu{E_i}))$
and $\sum_{i\in I} z_i=0$ then $(\forall i \in I) z_i$ belongs to the lineality space
of $\closu{E_i}$. Then
\begin{align}
\closu{(E_1+\cdots+E_m)}&=\closu{E_1}+\cdots+\closu{E_m},\\
\rec[\closu{(E_1+\cdots+E_m)}]&=\rec(\closu{E_1})+\cdots+\rec(\closu{E_m}).
\end{align}
\end{fact}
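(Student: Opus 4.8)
The plan is to deduce Fact~\ref{f:recC:Cl:II} from Fact~\ref{f:recC:CL:I} via the standard ``product space $+$ summation map'' device, which handles all $m$ at once. Work in $\RR^{nm}$, put $C := E_1 \times \cdots \times E_m$, and let $A \colon \RR^{nm} \to \RR^n$ be the linear transformation $A(x_1,\ldots,x_m) := x_1 + \cdots + x_m$. Since each $E_i$ is a nonempty convex subset of $\RR^n$, the set $C$ is a nonempty convex subset of $\RR^{nm}$, so Fact~\ref{f:recC:CL:I} will apply to $C$ and $A$ once its hypothesis has been verified.

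First I would record the elementary product identities $\closu{C} = \closu{E_1} \times \cdots \times \closu{E_m}$, $\rec(\closu{C}) = \rec(\closu{E_1}) \times \cdots \times \rec(\closu{E_m})$, and that the lineality space of $\closu{C}$ equals the product of the lineality spaces of the $\closu{E_i}$; all three are immediate from the definitions and should be stated but not belabored. With these in hand, the kernel condition required by Fact~\ref{f:recC:CL:I}, namely that every $z = (z_1,\ldots,z_m) \in \rec(\closu{C}) \setminus \{0\}$ with $Az = 0$ lies in the lineality space of $\closu{C}$, translates verbatim into the hypothesis of the present statement: if $z_i \in \rec(\closu{E_i})$ for every $i$ and $\sum_{i \in I} z_i = 0$, then each $z_i$ lies in the lineality space of $\closu{E_i}$ (the excluded case $z = 0$ being trivial).

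Then Fact~\ref{f:recC:CL:I} yields $\closu{(AC)} = A(\closu{C})$ and $\rec(A(\closu{C})) = A[\rec(\closu{C})]$, and it remains only to unwind notation. Since $AC = E_1 + \cdots + E_m$ and $A(\closu{C}) = \closu{E_1} + \cdots + \closu{E_m}$, the first identity reads $\closu{(E_1 + \cdots + E_m)} = \closu{E_1} + \cdots + \closu{E_m}$, which is the first displayed claim. Substituting this back gives $A(\closu{C}) = \closu{(E_1 + \cdots + E_m)}$, while $A[\rec(\closu{C})] = \rec(\closu{E_1}) + \cdots + \rec(\closu{E_m})$; combining the two yields the second displayed claim.

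I do not anticipate a genuine obstacle: the substance is entirely contained in Fact~\ref{f:recC:CL:I}, and the only points needing care are (a) the product identities for closure, recession cone, and lineality space, and (b) matching the quantifiers precisely, including the harmless treatment of the zero vector that Fact~\ref{f:recC:CL:I} excludes. An alternative, should one prefer to avoid the product identities, is to prove the case $m = 2$ directly and then induct on $m$, but the summation-map argument is cleaner and uniform in $m$.
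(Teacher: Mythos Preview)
Your proposal is correct and is exactly the standard argument. The paper does not supply its own proof of this statement---it is recorded as a Fact and attributed directly to \cite[Corollary~9.1.1]{Rock70}---but Rockafellar's proof there is precisely the product-space $+$ summation-map reduction to Theorem~9.1 (Fact~\ref{f:recC:CL:I}) that you outline, and the paper itself uses the same device in Corollary~\ref{nc:recC:Cl:II} to prove the nearly-convex generalization.
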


\subsection{Auxiliary results on nearly convex sets}
Near equality introduced in \cite{bmw-nenc} provides a convenient tool to study
nearly convex sets and ranges of maximally monotone operators.

\begin{definition}[near equality]
\label{d:ne}
Let $C$ and $D$ be subsets of $\RR^n$.
We say that $C$ and $D$ are \emph{nearly
equal}\index{nearly equal}, if
\begin{equation}
\closu{C} = \closu{D}\text{ and }\reli C = \reli D.
\end{equation}
and denote this by $C\approx D$.
\end{definition}

\begin{fact}\emph{\cite[Lemma 2.7]{bmw-nenc}}
\label{l:nearconset}
Let $E$ be a nearly convex subset of $\RR^n$, say
$C\subseteq E\subseteq \closu{C}$, where
$C$ is a convex subset of $\RR^n$.
Then
\begin{equation}
E\approx\closu{E}\approx\reli E\approx\conv E
\approx\reli\conv E \approx C.
\end{equation}
In particular, the following hold.
\begin{enumerate}
\item\label{o:c} $\closu{E}$ and $\reli E$ are convex.
\item\label{t:c} If $E\neq\varnothing$, then $\reli E\neq\varnothing$.
\end{enumerate}
\end{fact}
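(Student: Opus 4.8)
The plan is to isolate one elementary squeezing statement and then apply it a few times. Call it the \emph{key observation}: if $D\subseteq\RR^n$ is \emph{any} set with $C\subseteq D\subseteq\closu C$ for some convex $C$, then $D\approx C$, i.e.\ $\closu D=\closu C$ and $\reli D=\reli C$. Granting this, the displayed chain is short. First, $C\subseteq\conv E\subseteq\closu C$, because $\closu C$ is convex and contains $E$, hence contains $\conv E$; so applying the key observation with $D:=E$ and with $D:=\conv E$ yields $E\approx C$ and $\conv E\approx C$, and in particular $\closu E=\closu C$, $\reli E=\reli C$, and $\reli\conv E=\reli C$. Since $\closu C\approx C$ and $\reli C\approx C$ (immediate from Fact~\ref{f:ri}\ref{f:ri:i} and \ref{f:ri:iii}) and $\approx$ is plainly an equivalence relation, transitivity promotes these equalities to $\closu E\approx C$, $\reli E\approx C$, and $\reli\conv E\approx C$. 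Thus all six sets are nearly equal to $C$, hence to one another, which is the asserted chain.

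To prove the key observation, dispatch the hulls first. Taking closures in $C\subseteq D\subseteq\closu C$ and using $\closu{\closu C}=\closu C$ gives $\closu D=\closu C$; taking affine hulls, using monotonicity of $\aff$ together with $\aff(\closu C)=\aff C$ from Fact~\ref{f:ri}\ref{f:ri:ii}, gives $\aff D=\aff C=:L$. For the relative interiors I will argue two inclusions straight from the set-theoretic definition of $\ri$, since $D$ is not assumed convex. If $x\in\reli C$, then $x\in L$ and $(x+\varepsilon B(0,1))\cap L\subseteq C\subseteq D$ for some $\varepsilon>0$, so $x\in\reli D$; hence $\reli C\subseteq\reli D$. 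Conversely, if $x\in\reli D$, then $x\in L$ and $(x+\varepsilon B(0,1))\cap L\subseteq D\subseteq\closu C$ for some $\varepsilon>0$; because $\aff(\closu C)=L$, this is precisely the statement $x\in\reli(\closu C)$, and $\reli(\closu C)=\reli C$ by Fact~\ref{f:ri}\ref{f:ri:iii}, so $\reli D\subseteq\reli C$. Therefore $\reli D=\reli C$, and with $\closu D=\closu C$ we conclude $D\approx C$.

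The two ``in particular'' assertions are then immediate corollaries. Convexity of $\closu E$ and $\reli E$ follows from $\closu E=\closu C$, $\reli E=\reli C$, and the convexity of $\closu C$ and $\reli C$ recorded in Fact~\ref{f:ri}\ref{f:ri:i--}. For the nonemptiness claim, if $E\neq\varnothing$ then $C\neq\varnothing$, since $C=\varnothing$ would force $\closu C=\varnothing$ and then $E\subseteq\closu C$ would force $E=\varnothing$; hence $\reli E=\reli C\neq\varnothing$ by Fact~\ref{f:ri}\ref{f:ri:i-}.

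The one step that needs genuine care is the relative-interior comparison, precisely because $E$, $\reli E$, and $\reli\conv E$ are honestly possibly nonconvex, so one cannot feed them into the convex relative-interior calculus of Fact~\ref{f:ri}; the argument has to be run from the definition of $\ri$, and the step that makes it go through symmetrically is the identity $\aff D=\aff C$. Everything else is bookkeeping with Fact~\ref{f:ri} and with the transitivity of near equality.
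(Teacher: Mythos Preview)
Your proof is correct. Note that the paper does not supply its own argument for this statement---it is recorded as a Fact cited from \cite{bmw-nenc}---so there is no in-paper proof to compare against. Your key observation (that $C\subseteq D\subseteq\closu C$ with $C$ convex forces $D\approx C$, proved by first pinning down $\aff D=\aff C$ and then running the two relative-interior inclusions straight from the definition) is the right engine, and the deduction of the displayed chain and of items \ref{o:c}--\ref{t:c} from it is clean. The one place worth a remark is your appeal to $\reli C\approx C$: the identity $\reli(\reli C)=\reli C$ is not listed verbatim in Fact~\ref{f:ri}, but it does follow from \ref{f:ri:i} and \ref{f:ri:iii} as you indicate, via $\reli(\reli C)=\reli(\closu{\reli C})=\reli(\closu C)=\reli C$.
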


\begin{fact}\emph{\cite[Proposition~2.12(i),(ii),(iii)]{bmw-nenc}} \label{ri:part}
Let $E_1$ and $E_2$
be nearly convex subsets of $\RR^n$. Then
\begin{equation}
E_1\approx E_2 \iff \ri E_1= \ri E_2\iff\closu{E_1}=\closu{E_2}.
\end{equation}
\end{fact}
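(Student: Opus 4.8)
The plan is to prove the chain of equivalences in Fact~\ref{ri:part} by exploiting Fact~\ref{l:nearconset}, which says that every nearly convex set is nearly equal to its closure, its relative interior, its convex hull, and the convex set $C$ sandwiched between $E$ and $\closu{C}$. Write $E_1$ with witness $C_1$ (so $C_1\subseteq E_1\subseteq\closu{C_1}$ and $C_1$ convex) and $E_2$ with witness $C_2$. By Fact~\ref{l:nearconset} we have $\closu{E_i}=\closu{C_i}$ and $\reli E_i=\reli C_i$ for $i=1,2$, and moreover $\closu{E_i}$ and $\reli E_i$ are convex. This immediately reduces everything to a statement about the two convex sets $C_1,C_2$ (equivalently $\closu{E_1},\closu{E_2}$, or $\reli E_1,\reli E_2$), so the real engine is Fact~\ref{f:ri}\ref{f:ri:vi}: for convex sets, equal relative interiors $\iff$ equal closures $\iff$ the sandwich relation.

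First I would show $E_1\approx E_2\ \Rightarrow\ \ri E_1=\ri E_2$: this is immediate from Definition~\ref{d:ne}, since near equality of $E_1,E_2$ by definition includes $\reli E_1=\reli E_2$. Next, $\ri E_1=\ri E_2\ \Rightarrow\ \closu{E_1}=\closu{E_2}$: by Fact~\ref{l:nearconset} and Fact~\ref{f:ri}\ref{f:ri:i}, $\closu{E_i}=\closu{\reli C_i}$; applying Fact~\ref{f:ri}\ref{f:ri:i} to the convex sets $\reli C_i=\reli E_i$, and using the hypothesis $\reli E_1=\reli E_2$, we get $\closu{E_1}=\closu{(\reli E_1)}=\closu{(\reli E_2)}=\closu{E_2}$. (One should note the degenerate case $E_1=\varnothing$: then $\reli E_1=\varnothing$, the hypothesis forces $\reli E_2=\varnothing$, hence by Fact~\ref{l:nearconset}\ref{t:c} $E_2=\varnothing$, and all four sets are empty.) Finally, $\closu{E_1}=\closu{E_2}\ \Rightarrow\ E_1\approx E_2$: from $\closu{E_1}=\closu{E_2}$ and Fact~\ref{f:ri}\ref{f:ri:iii} applied to the convex sets $\closu{E_i}$, we get $\reli E_i=\reli(\closu{E_i})=\reli(\closu{E_1})$ for both $i$, so $\reli E_1=\reli E_2$ as well, which together with $\closu{E_1}=\closu{E_2}$ is exactly $E_1\approx E_2$ by Definition~\ref{d:ne}. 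Closing the cycle $\approx\ \Rightarrow\ \ri=\ \Rightarrow\ \closu{}=\ \Rightarrow\ \approx$ establishes all three equivalences.

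I do not anticipate a serious obstacle here; the only point requiring care is making sure every application of the convex-set facts (Fact~\ref{f:ri}\ref{f:ri:i},\ref{f:ri:iii},\ref{f:ri:vi}) is applied to an honestly convex set, which is guaranteed by Fact~\ref{l:nearconset}\ref{o:c} for $\closu{E_i}$ and $\reli E_i$, and by hypothesis for $C_i$. The empty-set bookkeeping is the one place a reader could trip, so I would dispatch it explicitly at the start. Everything else is a direct substitution chase through the cited facts.
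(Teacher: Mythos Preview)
Your argument is correct. The cycle $\approx\ \Rightarrow\ \reli=\ \Rightarrow\ \closu{}=\ \Rightarrow\ \approx$ goes through exactly as you describe, and each step is justified by the facts you cite: the first implication is the definition of $\approx$; the second uses $\closu{E_i}=\closu{(\reli E_i)}$, which is part of $E_i\approx\reli E_i$ from Fact~\ref{l:nearconset}; the third uses $\reli E_i=\reli(\closu{E_i})$, which is part of $E_i\approx\closu{E_i}$ from the same fact. The detour through the witnesses $C_i$ and Fact~\ref{f:ri}\ref{f:ri:i},\ref{f:ri:iii} is not strictly necessary---everything you need is already packaged in the near-equality chain of Fact~\ref{l:nearconset}---but it does no harm. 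The empty-set discussion is also harmless but unnecessary: your closure and relative-interior identities hold verbatim when all sets involved are empty.

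As for comparison with the paper: there is nothing to compare. The paper records this statement as a \emph{Fact} cited from \cite[Proposition~2.12(i),(ii),(iii)]{bmw-nenc} and gives no proof of its own. Your write-up therefore supplies a self-contained argument where the paper simply invokes an external reference; in that sense it is strictly more informative than what appears in the text.
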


\begin{fact}\emph{\cite[Proposition~2.5]{bmw-nenc}} \label{sq:thm}
Let $A$, $B$ and $C$
be subsets of $\RR^n$ such that $A\approx C$ and $A\subseteq B\subseteq C$. Then
$A\approx B\approx C$.
\end{fact}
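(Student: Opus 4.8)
The plan is to verify directly the two defining conditions of near equality---equality of closures and equality of relative interiors---for the pairs $(A,B)$ and $(B,C)$, propagating them from the hypothesis $A\approx C$ by a squeezing argument along $A\subseteq B\subseteq C$.

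First I would dispose of the closures. Since the closure operator is monotone and $A\subseteq B\subseteq C$, we get $\closu A\subseteq\closu B\subseteq\closu C$; as $A\approx C$ gives $\closu A=\closu C$, all three closures coincide, so in particular $\closu A=\closu B$ and $\closu B=\closu C$.

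Next comes the relative interiors, which is the delicate part, because $\ri$ is not monotone under inclusion for arbitrary (non-convex) sets. The remedy is to first show that $A$, $B$, $C$ share a common affine hull. For any $S\subseteq\RR^n$ one has $\aff(\closu S)=\aff S$: indeed $\aff S$ is an affine subspace, hence closed in $\RR^n$, so $\closu S\subseteq\aff S$ and thus $\aff(\closu S)\subseteq\aff S$, while the reverse inclusion is immediate. Applying this with $\closu A=\closu C$ yields $\aff A=\aff(\closu A)=\aff(\closu C)=\aff C$; since $A\subseteq B\subseteq C$ forces $\aff A\subseteq\aff B\subseteq\aff C$, we conclude $\aff A=\aff B=\aff C=:L$. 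With this common affine hull, $\ri$ does become monotone along the chain: if $x\in\ri A$ then $x\in L$ and there is $\varepsilon>0$ with $(x+\varepsilon B(0,1))\cap L\subseteq A\subseteq B$, whence $x\in\ri B$; similarly $\ri B\subseteq\ri C$. Therefore $\ri A\subseteq\ri B\subseteq\ri C$, and since $\ri A=\ri C$ all three relative interiors agree, giving $\ri A=\ri B$ and $\ri B=\ri C$. Combining with the closure identities yields $A\approx B$ and $B\approx C$.

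I expect the only real obstacle to be this non-monotonicity of $\ri$ for general sets; everything else is routine. The whole argument turns on the observation that the closure equality automatically upgrades to an equality of affine hulls, after which the relative interiors can simply be squeezed. Note that no convexity or near convexity of $A$, $B$, $C$ is used anywhere.
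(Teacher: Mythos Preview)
Your argument is correct. The closure part is immediate, and you rightly isolate the only real issue: relative interior is not monotone for arbitrary sets, so one must first pin down a common affine hull. Your observation that $\aff S=\aff(\closu S)$ (affine subspaces of $\RR^n$ being closed) together with $\closu A=\closu C$ forces $\aff A=\aff B=\aff C$, after which the squeeze $\reli A\subseteq\reli B\subseteq\reli C$ goes through verbatim from the definition.

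As for comparison: the paper does not prove this statement. It is recorded as a Fact quoted from \cite[Proposition~2.5]{bmw-nenc}, with no argument given here. So there is no ``paper's own proof'' to set yours against; your write-up simply supplies what the present paper omits by citation.
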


\begin{fact}
 {\rm(See \cite[Theorem~12.41]{Rock98}.)}
 \label{D-R-nc}
Let $A:\RR^n \rras \RR^n$ be maximally monotone.
Then $\dom A$ and $\ran A$
are nearly convex.
\end{fact}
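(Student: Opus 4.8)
The plan is to produce the convex witness $C:=\reli\conv(\dom A)$ and establish the sandwich $C\subseteq\dom A\subseteq\closu{C}$. The set $C$ is convex by Fact~\ref{f:ri}\ref{f:ri:i--}, and the right-hand inclusion is immediate: $\dom A\subseteq\conv(\dom A)\subseteq\closu{\conv(\dom A)}=\closu{\reli\conv(\dom A)}=\closu{C}$, using Fact~\ref{f:ri}\ref{f:ri:i}. So the whole substance is the inclusion $\reli\conv(\dom A)\subseteq\dom A$. For the range, note $\ran A=\dom(A^{-1})$ and that $A^{-1}$ is again maximally monotone (monotonicity and maximality are symmetric under inverting the graph), so the statement for $\ran A$ follows from the one for $\dom A$ applied to $A^{-1}$; I therefore focus on $\dom A$.

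First I would reduce to the full-dimensional case. Fix $a\in\dom A$, let $V$ be the linear subspace parallel to $\aff(\dom A)$, and consider the partial compression $\tilde A\colon V\rras V$, $x\mapsto P_V\big(A(a+x)\big)$. Monotonicity of $\tilde A$ is routine, maximality follows from Minty's theorem (surjectivity of $\Id+A$ forces surjectivity of $\Id_V+\tilde A$, which on $V$ is equivalent to maximality), $\dom\tilde A=\dom A-a$ has affine hull $V$, and $a+x\in\dom A\iff x\in\dom\tilde A$. Since $\reli\conv(\dom A)=a+\inte\conv(\dom\tilde A)$, it is enough to prove: if $A$ is maximally monotone with $\inte\conv(\dom A)\neq\varnothing$, then $\inte\conv(\dom A)\subseteq\dom A$.

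The analytic engine is Rockafellar's local boundedness theorem \cite{Rock98}: a monotone operator on $\RR^n$ is locally bounded at every point of $\inte\conv(\dom A)$. Granting it, fix $x_0\in\inte\conv(\dom A)$ and assume $x_0\notin\dom A$. Every point of $\inte\conv(\dom A)$ is a limit of points of $\dom A$ (a companion consequence of local boundedness together with the maximality argument below, or of the convexity of $\closu{\dom A}$), so pick $x_k\to x_0$ with $x_k\in\dom A$ and choose $u_k\in Ax_k$; by local boundedness $(u_k)$ is bounded, hence $u_k\to u_0$ along a subsequence. Passing to the limit in the monotonicity inequalities gives $\scal{x_0-y}{u_0-v}\geq0$ for all $(y,v)\in\gra A$, i.e.\ $\gra A\cup\{(x_0,u_0)\}$ is monotone; since $x_0\notin\dom A$ this strictly enlarges $\gra A$, contradicting maximality. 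Therefore $x_0\in\dom A$, and the inclusion is proved. (An alternative route replaces local boundedness by the theory of representative/Fitzpatrick functions, exploiting that $\dom A$ lies between the relative interior and the closure of the first-coordinate projection of the convex set $\dom F_A$.)

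The main obstacle is the intertwining of ``$\reli\conv(\dom A)\subseteq\dom A$'' with ``$\reli\conv(\dom A)\subseteq\closu{\dom A}$'' (equivalently, the convexity of $\closu{\dom A}$): neither is elementary, both rest on the local boundedness theorem plus the maximality trick, and the cleanest exposition proves the package --- convexity of $\closu{\dom A}$, local boundedness on $\reli\conv(\dom A)$, and the inclusion $\reli\conv(\dom A)\subseteq\dom A$ --- simultaneously rather than in sequence. The dimension-reduction step also needs care, since a ``restriction'' of a maximally monotone operator is not automatically maximally monotone without the Minty/surjectivity input.
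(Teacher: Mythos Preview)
The paper does not prove this statement: it is recorded as a \emph{Fact} with a bare citation to \cite[Theorem~12.41]{Rock98}, so there is no in-paper proof to compare against. Your write-up is therefore not a comparison but an attempted independent proof of the cited result.

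As a sketch of the standard argument your plan is on the right track, and the reduction to the range via $A^{-1}$ and the use of local boundedness plus maximality are exactly the ingredients in Rockafellar--Wets. However, the gap you yourself flag is genuine and not merely cosmetic: to run the limiting argument at $x_0\in\inte\conv(\dom A)$ you need $x_0\in\closu{\dom A}$, and you invoke either the convexity of $\closu{\dom A}$ or a ``companion consequence'' that amounts to the same thing. That convexity is precisely the Minty--Rockafellar virtual-convexity theorem (see \cite{minty,Rockvirtual}), and it is not a byproduct of the displayed argument; in the standard treatments it is proved first (typically via the resolvent $J_A=(\Id+A)^{-1}$, which is single-valued, nonexpansive with full domain, and satisfies $\ran J_A=\dom A$, so that $\closu{\dom A}=\closu{\ran J_A}$ is convex), and only then does the local-boundedness/maximality step yield $\reli\conv(\dom A)\subseteq\dom A$. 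Your dimension-reduction via $\tilde A=P_V\circ A(a+\cdot)$ is fine (monotonicity is the computation you indicate, and surjectivity of $\Id_V+\tilde A$ follows because any solution $y$ of $a+v\in y+Ay$ must lie in $\dom A\subseteq a+V$), but it does not remove the need for the virtual-convexity input. In short: the proposal is a correct outline once one supplies the convexity of $\closu{\dom A}$ as a separate lemma; as written it is circular at the point you identify.
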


\begin{remark}
\label{BH:local:op:cl} Fact~\ref{D-R-nc} can not be localized.
Suppose that $A:=P_{B(0,1)}$ is the projection onto the unit ball in $\RR^2$, which is
a gradient mapping of the continuous differentiable convex function $f:\RR^2\rightarrow \RR$
 given by
$$f(x):=\begin{cases}
\|x\|^2/2 & \text{ if $\|x\|\leq 1$,}\\
\|x\|-1/2 & \text{ if $\|x\|>1$.}
\end{cases}
$$
\begin{enumerate}
\item Let $S:=\menge{(x,y)\in\RR^2}{x+y>2, x>0, y>0}$ be open convex. The set
$\ran P_{B(0,1)}(S)=\menge{(x,y)\in \RR^2}{x^2+y^2=1, x>0, y>0}$ is not nearly convex.
\item Let  $S:=\menge{(x,y)\in\RR^2}{x+y\geq 2, x\geq 0, y\geq 0}$ be closed convex. The set
$\ran P_{B(0,1)}(S)=\menge{(x,y)\in \RR^2}{x^2+y^2=1, x\geq 0, y\geq 0}$ is not nearly convex.
\end{enumerate}
\end{remark}
We refer readers to \cite{lewis, bzhu, BC2011, Rock70, Si2} for more materials on convex analysis and
monotone operators.

\section{Characterizations and basic properties of nearly convex sets}\label{s:characterization:basic}

Utilizing near equality, in \cite{bmw-nenc} the authors provide the following characterizations of
nearly convex sets.
\begin{fact}[characterization of near convexity]\emph{\cite[Lemma 2.9]{bmw-nenc}}
\label{l:ruger}
Let $E\subseteq \RR^n$.
Then the following are equivalent:
\begin{enumerate}
\item
\label{l:rugeri}
$E$ is nearly convex.
\item
\label{l:rugerii}
$E\approx\conv E$.
\item
\label{l:rugerii+}
$E$ is nearly equal to a convex set.
\item
\label{l:rugerii++}
$E$ is nearly equal to a nearly convex set.
\item
\label{l:rugeriii}
$\reli\conv E \subseteq E$.
\end{enumerate}
\end{fact}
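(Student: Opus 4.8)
The plan is to prove the chain of equivalences $\ref{l:rugeri}\Rightarrow\ref{l:rugerii}\Rightarrow\ref{l:rugerii+}\Rightarrow\ref{l:rugerii++}\Rightarrow\ref{l:rugeriii}\Rightarrow\ref{l:rugeri}$, so that each implication is as short as possible. Most of the work is really bookkeeping with Fact~\ref{l:nearconset}, Fact~\ref{ri:part}, and Fact~\ref{sq:thm}; the one genuinely substantive step is $\ref{l:rugeriii}\Rightarrow\ref{l:rugeri}$, which is where the definition of near convexity has to be produced out of thin air.

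For $\ref{l:rugeri}\Rightarrow\ref{l:rugerii}$: if $E$ is nearly convex, then Fact~\ref{l:nearconset} literally lists $E\approx\conv E$ among its conclusions, so there is nothing to do. The implications $\ref{l:rugerii}\Rightarrow\ref{l:rugerii+}$ and $\ref{l:rugerii+}\Rightarrow\ref{l:rugerii++}$ are immediate: $\conv E$ is convex, and a convex set is in particular nearly convex (take $C=$ the set itself in the definition). For $\ref{l:rugerii++}\Rightarrow\ref{l:rugeriii}$: suppose $E\approx F$ with $F$ nearly convex. By Fact~\ref{l:nearconset} applied to $F$ we have $F\approx\reli\conv F$, hence by transitivity of $\approx$ (which is clear from the definition, since equality of closures and of relative interiors are both transitive) $E\approx\reli\conv F$; in particular $\reli E=\reli(\reli\conv F)$. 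Now $\closu E=\closu F$ gives $\conv\closu E=\conv\closu F$, and combining with $\reli E = \reli F$ one checks $\reli\conv E=\reli\conv F$ using Fact~\ref{f:ri}\ref{f:ri:iii},\ref{f:ri:i} (the relative interior and closure of a convex set determine each other, and $\conv E$ and $\conv F$ have the same closure because $\closu{\conv E}=\closu{\conv(\closu E)}$ depends only on $\closu E$). Since $F$ is nearly convex, $\reli\conv F\subseteq F$; but we want containment in $E$, so instead argue directly: from $E\approx\conv E$ (which we must first establish — see below) we get $\reli\conv E=\reli E\subseteq E$. Actually the cleanest route is to prove $\ref{l:rugerii++}\Rightarrow\ref{l:rugerii}$ first (near equality is transitive and a nearly convex $F$ satisfies $F\approx\conv F$ by Fact~\ref{l:nearconset}, so $E\approx\conv F$; then $\closu{\conv E}=\closu E=\closu F=\closu{\conv F}$ and Fact~\ref{f:ri} forces $\reli\conv E=\reli\conv F=\reli F = \reli E$, giving $E\approx\conv E$), and then $\ref{l:rugerii}\Rightarrow\ref{l:rugeriii}$ is trivial since $E\approx\conv E$ means $\reli\conv E=\reli E\subseteq E$.

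The remaining and main step is $\ref{l:rugeriii}\Rightarrow\ref{l:rugeri}$: assume $\reli\conv E\subseteq E$ and produce a convex $C$ with $C\subseteq E\subseteq\closu C$. The natural candidate is $C:=\reli\conv E$, which is convex by Fact~\ref{f:ri}\ref{f:ri:i--}. The hypothesis gives $C\subseteq E$ directly. For the upper inclusion $E\subseteq\closu C$: every point of $E$ lies in $\conv E$, and $\closu C=\closu{\reli\conv E}=\closu{\conv E}$ by Fact~\ref{f:ri}\ref{f:ri:i} applied to the convex set $\conv E$; hence $E\subseteq\conv E\subseteq\closu{\conv E}=\closu C$. (One should note $\conv E\neq\varnothing$ forces $\reli\conv E\neq\varnothing$ by Fact~\ref{f:ri}\ref{f:ri:i-}, and the empty set is vacuously nearly convex, so the degenerate case is fine.) I expect the only place demanding care is making sure the relative-interior manipulations in $\ref{l:rugerii++}\Rightarrow\ref{l:rugerii}$ are justified purely from $\closu E=\closu F$ plus Fact~\ref{f:ri}, rather than secretly assuming $E$ is already known to be nearly convex — that is the subtle circularity to avoid.
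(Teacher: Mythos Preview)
The paper does not actually prove this statement: it is recorded as a Fact with a citation to \cite[Lemma~2.9]{bmw-nenc} and no argument is given here. So there is nothing in the present paper to compare your approach against.

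On its own merits your argument is correct, though the exposition in the middle is a little tangled. The chain \ref{l:rugeri}$\Rightarrow$\ref{l:rugerii}$\Rightarrow$\ref{l:rugerii+}$\Rightarrow$\ref{l:rugerii++} is immediate from Fact~\ref{l:nearconset} and the trivial inclusions. Your handling of \ref{l:rugerii++}$\Rightarrow$\ref{l:rugerii} is sound: the identity $\closu{\conv E}=\closu{\conv(\closu E)}$ (which you justify correctly) shows $\closu{\conv E}$ depends only on $\closu E$, whence $\closu{\conv E}=\closu{\conv F}$; then Fact~\ref{f:ri}\ref{f:ri:vi} gives $\reli\conv E=\reli\conv F$, and Fact~\ref{l:nearconset} applied to the nearly convex $F$ supplies $\reli\conv F=\reli F=\reli E$ and $\closu{\conv F}=\closu F=\closu E$. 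There is no circularity here: you only invoke Fact~\ref{l:nearconset} for $F$, which is assumed nearly convex, not for $E$. The step \ref{l:rugerii}$\Rightarrow$\ref{l:rugeriii} is then trivial, and your \ref{l:rugeriii}$\Rightarrow$\ref{l:rugeri} with $C:=\reli\conv E$ is exactly the natural argument. I would suggest cleaning up the presentation by committing from the outset to the route \ref{l:rugerii++}$\Rightarrow$\ref{l:rugerii}$\Rightarrow$\ref{l:rugeriii} rather than narrating the abandoned direct attempt at \ref{l:rugerii++}$\Rightarrow$\ref{l:rugeriii}.
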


We now provide further characterizations of nearly convex sets.
\begin{thm}\label{nc:charac}
Let $E$ be a nonempty subset of $\RR^n$.
Then the following are equivalent:
\begin{enumerate}
\item\label{t:nc:charac:1}
$E$ is nearly convex.
\item\label{t:nc:charac:2}
$\ri E$ is convex and $ \closu{(\ri E)}=\closu{E}$.
\item\label{t:nc:charac:3}
$(\forall x\in \ri E) $ $(\forall y\in E)$ $[x,y[\subseteq \ri E$.
\item\label{t:nc:charac:4}
$\closu{E}$ is convex and $\ri (\closu{ E})\subseteq\ri{E}$.
\end{enumerate}
\end{thm}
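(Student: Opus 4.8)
The plan is to prove the cyclic chain \ref{t:nc:charac:1} $\Rightarrow$ \ref{t:nc:charac:2} $\Rightarrow$ \ref{t:nc:charac:3} $\Rightarrow$ \ref{t:nc:charac:4} $\Rightarrow$ \ref{t:nc:charac:1}. The two engines will be Fact~\ref{l:nearconset}, which turns near convexity of $E$ into the near equality $E\approx\reli E$ with $\reli E$ and $\closu E$ convex, and the line-segment principle Fact~\ref{l:access}, which manufactures the half-open segments demanded by \ref{t:nc:charac:3}; throughout I use the relative-interior calculus of Fact~\ref{f:ri}, in particular $\closu{(\reli C)}=\closu C$ and $\reli C=\reli(\closu C)$ for convex $C$, together with the elementary fact that a nonempty relative interior is relatively open, i.e.\ $\reli(\reli S)=\reli S$ whenever $\reli S\neq\varnothing$.

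The step \ref{t:nc:charac:1} $\Rightarrow$ \ref{t:nc:charac:2} should be immediate: near convexity of $E$ gives, by Fact~\ref{l:nearconset}, that $\reli E$ is convex and $E\approx\reli E$, and unravelling the latter through Definition~\ref{d:ne} yields $\closu{(\reli E)}=\closu E$. For \ref{t:nc:charac:2} $\Rightarrow$ \ref{t:nc:charac:3}, note first that since $E\neq\varnothing$, \ref{t:nc:charac:2} forces $\reli E\neq\varnothing$ and hence $\reli(\reli E)=\reli E$, while $\closu E=\closu{(\reli E)}$ is the closure of the convex set $\reli E$ and so is convex by Fact~\ref{f:ri}\ref{f:ri:i--}; given $x\in\reli E=\reli(\reli E)$ and $y\in E\subseteq\closu E=\closu{(\reli E)}$, Fact~\ref{l:access} applied to the convex set $\reli E$ gives $[x,y[\,\subseteq\reli(\reli E)=\reli E$. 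For \ref{t:nc:charac:4} $\Rightarrow$ \ref{t:nc:charac:1}, put $C:=\reli(\closu E)$; this set is convex by Fact~\ref{f:ri}\ref{f:ri:i--}, it satisfies $C=\reli(\closu E)\subseteq\reli E\subseteq E$ by \ref{t:nc:charac:4}, and $\closu C=\closu{(\reli(\closu E))}=\closu{(\closu E)}=\closu E\supseteq E$ by Fact~\ref{f:ri}\ref{f:ri:i}, so $C\subseteq E\subseteq\closu C$ with $C$ convex exhibits $E$ as nearly convex.

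The substantive step is \ref{t:nc:charac:3} $\Rightarrow$ \ref{t:nc:charac:4}. Convexity of $\reli E$ will be free: for $x_1,x_2\in\reli E\subseteq E$, \ref{t:nc:charac:3} gives $[x_1,x_2[\,\subseteq\reli E$, and since $x_2\in\reli E$ also $[x_1,x_2]\subseteq\reli E$. The real content is $\closu{(\reli E)}=\closu E$: the inclusion ``$\subseteq$'' is trivial, and for ``$\supseteq$'' one fixes some $x\in\reli E$ and notes that, for each $y\in E$, \ref{t:nc:charac:3} gives $[x,y[\,\subseteq\reli E$, whence $y=\lim_{t\to1^-}\big((1-t)x+ty\big)\in\closu{(\reli E)}$; thus $E\subseteq\closu{(\reli E)}$ and so $\closu E\subseteq\closu{(\reli E)}$. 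Then $\closu E=\closu{(\reli E)}$ is convex by Fact~\ref{f:ri}\ref{f:ri:i--}, and Fact~\ref{f:ri}\ref{f:ri:iii} applied to the convex set $\reli E$ yields $\reli(\closu E)=\reli\big(\closu{(\reli E)}\big)=\reli(\reli E)\subseteq\reli E$, which is \ref{t:nc:charac:4}. The one place where care is genuinely needed is the choice of the point $x\in\reli E$ in the ``$\supseteq$'' inclusion: condition \ref{t:nc:charac:3} is vacuously true whenever $\reli E=\varnothing$ — for instance for a two-point subset of $\RR^2$ — so this argument tacitly requires $\reli E\neq\varnothing$, and \ref{t:nc:charac:3} has to be read as carrying that proviso (it is automatic under each of \ref{t:nc:charac:1}, \ref{t:nc:charac:2} and \ref{t:nc:charac:4}, e.g.\ from Fact~\ref{l:nearconset}\ref{t:c} in the first case). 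Once a relative-interior point is secured, the line-segment principle reduces the rest to routine verifications.
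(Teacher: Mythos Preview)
Your proof is correct and uses essentially the same ingredients as the paper's --- Fact~\ref{l:nearconset} for the easy directions and Fact~\ref{l:access} for the segment work --- though you arrange them as a single cycle while the paper proves the pairwise equivalences \ref{t:nc:charac:1}$\Leftrightarrow$\ref{t:nc:charac:2}, \ref{t:nc:charac:2}$\Leftrightarrow$\ref{t:nc:charac:3}, and \ref{t:nc:charac:1}$\Leftrightarrow$\ref{t:nc:charac:4} separately (its \ref{t:nc:charac:3}$\Rightarrow$\ref{t:nc:charac:2} is essentially your \ref{t:nc:charac:3}$\Rightarrow$\ref{t:nc:charac:4} argument). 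Your explicit caveat that \ref{t:nc:charac:3} is vacuous when $\reli E=\varnothing$ (e.g.\ a two-point set) is well taken: the paper's own proof of \ref{t:nc:charac:3}$\Rightarrow$\ref{t:nc:charac:2} also silently fixes a point $x\in\reli E$, so the same proviso is tacitly assumed there as well.
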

\begin{proof}
\ref{t:nc:charac:1}$\RA$\ref{t:nc:charac:2}:
This follows from Fact~\ref{l:nearconset}.
\ref{t:nc:charac:2}$\RA$\ref{t:nc:charac:1}:
We have $\ri E\subseteq E\subseteq \overline{E}=\closu{(\ri E)}$.
Since $\ri E$ is convex,
$E$ is nearly convex.
\ref{t:nc:charac:2}$\RA$\ref{t:nc:charac:3}:
Since $E$ is nearly convex, by Fact~\ref{l:nearconset}\ref{o:c}
$\closu{E}$ is convex.
Using Fact~\ref{l:access} applied to the convex set $\closu{E}$
we conclude that
$(\forall x\in \ri E) $ $(\forall y\in \closu{E})$ $[x,y[\subseteq \ri (\closu{E})=\ri E$.
In particular, $(\forall x\in \ri E) $ $(\forall y\in E)$ $[x,y[\subseteq \ri E$.
\ref{t:nc:charac:3}$\RA$\ref{t:nc:charac:2}: Let $x\in \ri E$ and $y\in \ri E$.
Then $[x,y]=[x,y[~\cup \stb{y}\subseteq \ri E \cup \ri E=\ri E$, that is
$\ri E$ is convex. It is obvious that $\closu{(\ri E) }\subseteq \closu{E }$.
Now we show that $\closu{E }\subseteq \closu{(\ri E )}$. Indeed, let $y\in \closu{E }$.
Then $(\exists (y_n)_{n\in \NN})\subseteq E$ such that $y_n\to y$.
Therefore $(\forall x\in \ri E)$ $[x,y_n[\subseteq \ri E $, and consequently
$[x,y_n]\subseteq \closu{(\ri E)} $.
That is, $(y_n)_{n\in \NN}\subseteq \closu(\ri E)$, hence
$y\in \closu{(\ri E)}$,
and therefore $\closu{E }\subseteq \closu{(\ri E )}$, as claimed.
\ref{t:nc:charac:1}$\RA$\ref{t:nc:charac:4}:
This follows from Fact~\ref{l:nearconset}.
\ref{t:nc:charac:4}$\RA$\ref{t:nc:charac:1}:
Since $E$ is nonempty we have $\closu{E} $
is nonempty and convex by assumption, hence
	$\closu{[\ri (\closu{E})]}=\closu{E}$.
Now $\ri(\closu{ E})\subseteq \ri E\subseteq E
\subseteq \closu{E}=\closu{[\ri (\closu{E})]}$,
hence $\ri (\closu{E}) \approx E$.
Since $\closu{E}$ is convex
we have $\ri (\closu{E})$ is convex. It follows
from Fact~\ref{l:ruger}\ref{l:rugerii+} that $E$ is nearly convex.
\end{proof}

\begin{theorem}
Let $E$ be a nonempty subset in $\RR^n$. Then
$E$ is nearly convex if and only if $E=C\cup S$ where
$C$ is a nonempty convex subset of $\RR^n$ and
$S\subseteq \closu{C}\setminus \ri C$.
\end{theorem}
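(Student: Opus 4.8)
The plan is to prove both implications directly from the definition of near convexity, using nothing deeper than the trivial inclusion $\ri C\subseteq C$.

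For the ``only if'' direction, suppose $E$ is nearly convex, so that by definition there is a convex set $C$ with $C\subseteq E\subseteq\closu C$. First I would observe that $C\neq\varnothing$: if $C=\varnothing$ then $\closu C=\varnothing$, forcing $E=\varnothing$, contrary to hypothesis. Now set $S:=E\setminus C$; since $C\subseteq E$ this gives $E=C\cup S$. It remains to check $S\subseteq\closu C\setminus\ri C$. On one hand $S\subseteq E\subseteq\closu C$; on the other hand, because $\ri C\subseteq C$, no point of $S=E\setminus C$ lies in $\ri C$, i.e.\ $S\cap\ri C=\varnothing$. Hence $S\subseteq\closu C\setminus\ri C$, as required. (Alternatively one could take $C:=\ri E$, using Fact~\ref{l:nearconset} to know $\ri E$ is convex, nonempty, and satisfies $\closu{\ri E}=\closu E$; but the witness supplied by the definition already suffices.)

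For the ``if'' direction, suppose $E=C\cup S$ with $C$ a nonempty convex set and $S\subseteq\closu C\setminus\ri C\subseteq\closu C$. Then $C\subseteq C\cup S=E$, and $E=C\cup S\subseteq\closu C$ because both $C$ and $S$ lie in $\closu C$. Thus $C\subseteq E\subseteq\closu C$ with $C$ convex, which is exactly the definition of $E$ being nearly convex.

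Since both inclusions are immediate, there is essentially no obstacle here; the only point requiring a moment's care is the nonemptiness of $C$ in the forward direction, which is precisely why the statement restricts to nonempty $E$.
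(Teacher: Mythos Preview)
Your proof is correct, and it is genuinely more elementary than the paper's. In the forward direction the paper chooses the specific witness $C:=\ri E$ and $S:=E\setminus\ri E$, which obliges it to invoke Fact~\ref{l:nearconset} to know that $\ri E$ is nonempty, convex, and satisfies $\closu{(\ri E)}=\closu{E}$; you instead take $C$ to be any convex set supplied by the definition of near convexity and set $S:=E\setminus C$, which works immediately since $\ri C\subseteq C$. In the backward direction the contrast is sharper: the paper argues that $\ri E=\ri C$ and $\closu{E}=\closu{C}$, then deduces $E\approx\ri C$ and appeals to Fact~\ref{l:ruger}\ref{l:rugerii+}; you simply observe $C\subseteq E\subseteq\closu{C}$, which is the definition verbatim. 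Your route avoids all the auxiliary facts and makes the theorem transparent as a restatement of the definition; the paper's choice $C=\ri E$ has the mild advantage of giving a canonical decomposition (with $S$ lying on the relative boundary of $\closu{E}$), but at the cost of extra machinery.
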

\begin{proof}
$(\RA)$ Suppose that $E$ is nearly convex,
and notice that $E=\ri E\cup (E\setminus \ri E)$.
Set
\begin{equation}
C:=\ri E\;\;\;\text{and}\;\;\; S:=E\setminus \ri E\subseteq \closu{E}\setminus \ri E.
\end{equation}
Since $E$ is nearly convex, it follows from
Fact~\ref{l:nearconset} that $C$ is nonempty and convex.
Moreover $\closu{E}=\closu{(\ri E)}=\closu{C}$.
Therefore $E=C\cup S$ with $C$ convex and
 $S\subseteq \closu{C}\setminus \ri C$.
 $(\LA)$ Conversely, assume that $E=C\cup S$ where
$C$ is a nonempty convex subset of $\RR^n$ and
$S\subseteq \closu{C}\setminus \ri C$.
Clearly, $S\subseteq \pt (\closu{C})$,
where $\pt (\closu{C})$
is the relative boundary of $\closu{C}$.
Hence $\ri E=\ri C$ and consequently
$\ri E$ is nonempty and convex.
Moreover, since $\closu{S}\subseteq \closu{C}$, we have
$\closu{E}=\closu{(C\cup S)}=\closu{C}\cup \closu{S}=\closu{C}$
and consequently
$\closu{E}$ is convex. Finally notice that
\begin{equation}
\ri C=\ri E\subseteq E\subseteq \closu{E}=\closu{C}=\closu{(\ri C)}.
\end{equation}
That is $E\approx \ri C$ and hence $E$ is nearly convex
by Fact~\ref{l:ruger}\ref{l:rugerii+}.
\end{proof}

To study the relationship among core, interior and relative interior of a nearly convex set, we need two facts.
\begin{fact}\label{f:c:core:int}
Let $C$ be a convex set in $\RR^n$. Then $\intr C=\core C$.
Moreover, if $\intr C\neq \fady$ then $\intr C=\ri C$.
\end{fact}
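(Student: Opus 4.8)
The plan is to prove the two assertions separately, each directly from the definitions of $\core$, $\intr$ and $\ri$ recalled in Section~\ref{s:facts}.

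For the equality $\intr C=\core C$, I would first dispose of the inclusion $\intr C\subseteq\core C$, which needs no convexity: given $x\in\intr C$ and $\varepsilon>0$ with $B(x,\varepsilon)\subseteq C$, for any direction $y\in\RR^n$ one shrinks the scalar so that the symmetric segment $[x-\delta y,x+\delta y]$ fits inside $B(x,\varepsilon)\subseteq C$, hence $x\in\core C$. The substantive inclusion is $\core C\subseteq\intr C$. Here I would translate so that the core point in question is the origin, then feed the $n$ coordinate directions $e_1,\dots,e_n$ into the definition of the core to obtain scalars $\varepsilon_i>0$ with $[-\varepsilon_i e_i,\varepsilon_i e_i]\subseteq C$; setting $\varepsilon:=\min_i\varepsilon_i>0$ gives $\pm\varepsilon e_i\in C$ for every $i$. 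Convexity then forces $C$ to contain the convex hull of these $2n$ points, namely the cross-polytope $\{u\in\RR^n : \sum_i|u_i|\le\varepsilon\}$, and this polytope contains a genuine Euclidean ball about the origin (for instance $B(0,\varepsilon/n)$). Hence the origin lies in $\intr C$, which establishes $\core C\subseteq\intr C$ and therefore $\intr C=\core C$.

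For the ``moreover'' part, I would observe that $\intr C\neq\varnothing$ forces $\aff C=\RR^n$: an interior point $x_0$ carries a full-dimensional ball $B(x_0,\varepsilon)\subseteq C$, and $\aff B(x_0,\varepsilon)=\RR^n$, so by monotonicity of $\aff$ we get $\aff C=\RR^n$. Once $\aff C=\RR^n$, the defining condition of $\ri C$ — that $x\in\aff C$ and $(x+\varepsilon B(0,1))\cap\aff C\subseteq C$ — collapses to $x+\varepsilon B(0,1)\subseteq C$, which is precisely the condition defining $\intr C$. Therefore $\ri C=\intr C$. (The reverse inclusion $\intr C\subseteq\ri C$ is immediate and in fact holds without any hypothesis.)

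The only step that is not pure bookkeeping is $\core C\subseteq\intr C$: the point is to manufacture an honest neighborhood out of finitely many one-dimensional segment conditions, and this is exactly where \emph{both} convexity (to pass from the $2n$ vertices to their convex hull) and finite dimensionality (so that controlling the finitely many coordinate directions suffices) are essential. Everything else follows at once from unwinding the definitions, possibly together with Fact~\ref{f:ri}.
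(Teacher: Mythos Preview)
Your argument is correct. Both parts are handled cleanly: the cross-polytope construction for $\core C\subseteq\intr C$ is the standard elementary route and uses exactly the two ingredients that matter (convexity to pass to the hull of the $2n$ points, finite dimensionality so that finitely many directions suffice), and the second part is an immediate unwinding of the definition of $\ri$ once $\aff C=\RR^n$ is observed. A minor cosmetic point: the cross-polytope $\{u:\sum_i|u_i|\le\varepsilon\}$ actually contains the larger ball $B(0,\varepsilon/\sqrt{n})$ by Cauchy--Schwarz, but your $B(0,\varepsilon/n)$ is of course also fine.

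The paper, by contrast, does not prove this statement at all: it records it as a \emph{Fact} and simply cites \cite[Remark~2.73]{Bonnans} or \cite[Proposition~6.12]{BC2011} for the first part and \cite[p.~44]{Rock70} for the second. Your write-up is therefore strictly more informative than the paper's treatment; it supplies a short self-contained proof where the paper defers to the literature. If the goal is a self-contained exposition, your version is preferable; if the goal is brevity in a paper that already assumes familiarity with standard convex-analysis texts, the citation approach is adequate.
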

\begin{proof}For the first part, see \cite[Remark 2.73]{Bonnans} or \cite[Proposition 6.12]{BC2011}. The second part is clear from \cite[pg 44]{Rock70}.
\end{proof}

\begin{fact}\emph{\cite[Proposition~2.20]{bmw-nenc}}\label{f:nc:int:cl}
Let $E$ be a nearly convex subset of $\RR^n$. Then
$\intr E=\intr (\closu{E})$.
\end{fact}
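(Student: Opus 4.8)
\emph{Proof proposal.} The inclusion $\intr E\subseteq\intr(\closu E)$ is immediate, since $E\subseteq\closu E$ and the interior operator is monotone. The whole content is therefore the reverse inclusion $\intr(\closu E)\subseteq\intr E$, and the plan is to show that $\intr(\closu E)$ is in fact an \emph{open subset of $E$ itself}, whence it is automatically contained in $\intr E$.

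To establish $\intr(\closu E)\subseteq E$, I would chain three elementary facts. First, for any subset $S$ of $\RR^n$ one has $\intr S\subseteq\ri S$ straight from the definitions (a point with a full ball about it inside $S$ lies, in particular, in $\aff S$ with the relative ball inside $S$); applying this to $S=\closu E$ gives $\intr(\closu E)\subseteq\ri(\closu E)$. Second, near convexity yields $\ri(\closu E)=\ri E$: by Fact~\ref{l:nearconset} we have $E\approx\closu E$, and near equality means precisely $\closu E=\closu{(\closu E)}$ together with $\ri E=\ri(\closu E)$ (alternatively, invoke Fact~\ref{ri:part} for the nearly convex sets $E$ and $\closu E$, which have the same closure). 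Third, $\ri E\subseteq E$ holds unconditionally, by the definition of relative interior. Combining, $\intr(\closu E)\subseteq\ri(\closu E)=\ri E\subseteq E$.

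Since $\intr(\closu E)$ is open and contained in $E$, it is contained in $\intr E$, which is the missing inclusion, and equality follows. A mildly different packaging uses the fact recorded immediately above: if $\intr(\closu E)=\fady$, the identity is trivial because the easy inclusion already forces $\intr E=\fady$; and if $\intr(\closu E)\neq\fady$, then $\closu E$ is convex (Fact~\ref{l:nearconset}\ref{o:c}) with nonempty interior, so Fact~\ref{f:c:core:int} gives $\intr(\closu E)=\ri(\closu E)=\ri E\subseteq E$, and one concludes as before.

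I do not expect a genuine obstacle here. The only point requiring care is that the identity $\ri(\closu E)=\ri E$ is exactly where near convexity enters and would fail for an arbitrary set $E$, and that replacing ``relative interior'' by ``interior'' is harmless precisely because $\intr(\closu E)\neq\fady$ forces $\aff(\closu E)=\RR^n$; both points are routine once the auxiliary facts above are available.
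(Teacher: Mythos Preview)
Your proof is correct. The key chain $\intr(\closu E)\subseteq\ri(\closu E)=\ri E\subseteq E$, together with the openness of $\intr(\closu E)$, cleanly delivers the nontrivial inclusion, and you correctly identify that the equality $\ri(\closu E)=\ri E$ is precisely where near convexity is used (via Fact~\ref{l:nearconset}).

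Note, however, that there is no in-paper proof to compare against: the statement is recorded as a \emph{Fact} and attributed to \cite[Proposition~2.20]{bmw-nenc}, with no argument given here. Your argument is a natural and self-contained justification using only the auxiliary results already available in the paper, so it could serve as a drop-in proof if one were desired.
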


\begin{theorem}\label{nc:core:int}
Let $E$ be a nonempty nearly convex subset in $\RR^n$.
Then the following hold:
\begin{enumerate}
\item\label{nc:core:int:1}
$\core E=\intr E$.
\item\label{nc:core:int:2}
If $\intr E\neq \fady$ then $\intr E=\ri E$.
\item\label{nc:core:int:3}
$\aff (\ri E)=\aff E=\aff (\closu{E})$.
\end{enumerate}
\end{theorem}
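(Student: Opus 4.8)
The plan is to reduce all three parts to the corresponding classical statements for the \emph{convex} set $\closu E$, exploiting what near convexity buys us: $\closu E$ is convex (Fact~\ref{l:nearconset}\ref{o:c}), $E\approx\closu E$ and hence $\ri E=\ri(\closu E)$ (Fact~\ref{l:nearconset}), and $\intr E=\intr(\closu E)$ (Fact~\ref{f:nc:int:cl}). Once these three identities are in hand, each part is a short chaining of cited facts.

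For \ref{nc:core:int:1}, the inclusion $\intr E\subseteq\core E$ holds for an arbitrary subset of $\RR^n$, so only $\core E\subseteq\intr E$ needs argument. Here I would note $\core E\subseteq\core(\closu E)$ since $E\subseteq\closu E$, then apply the first part of Fact~\ref{f:c:core:int} to the convex set $\closu E$ to get $\core(\closu E)=\intr(\closu E)$, and finally use Fact~\ref{f:nc:int:cl} to rewrite $\intr(\closu E)=\intr E$. Concatenating yields $\core E\subseteq\intr E$, hence equality.

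For \ref{nc:core:int:2}, assume $\intr E\neq\fady$. By Fact~\ref{f:nc:int:cl} this says $\intr(\closu E)\neq\fady$, so the second part of Fact~\ref{f:c:core:int}, applied to the convex set $\closu E$, gives $\intr(\closu E)=\ri(\closu E)$; combined with $\intr E=\intr(\closu E)$ and $\ri(\closu E)=\ri E$ (from $E\approx\closu E$), this produces $\intr E=\ri E$. For \ref{nc:core:int:3}, the inclusions $\ri E\subseteq E\subseteq\closu E$ give $\aff(\ri E)\subseteq\aff E\subseteq\aff(\closu E)$ by monotonicity of the affine hull; to close the loop I would invoke Fact~\ref{f:ri}\ref{f:ri:ii} on the convex set $\closu E$, namely $\aff(\ri(\closu E))=\aff(\closu E)$, and substitute $\ri(\closu E)=\ri E$, forcing all three affine hulls to coincide.

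I do not anticipate a genuine obstacle: the proof is essentially bookkeeping. The only point that needs care is to justify, before any transfer, that we may legitimately replace $E$ by $\closu E$ inside $\intr(\cdot)$ and $\ri(\cdot)$ and that $\closu E$ is convex — all of which is already recorded in Facts~\ref{l:nearconset}, \ref{f:nc:int:cl}, and \ref{f:c:core:int}, so the write-up should be brief.
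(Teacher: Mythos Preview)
Your proposal is correct and follows essentially the same route as the paper: reduce each item to the convex set $\closu E$ via Fact~\ref{l:nearconset} (giving $\ri E=\ri(\closu E)$ and convexity of $\closu E$) and Fact~\ref{f:nc:int:cl} (giving $\intr E=\intr(\closu E)$), then invoke Fact~\ref{f:c:core:int} for \ref{nc:core:int:1}--\ref{nc:core:int:2} and Fact~\ref{f:ri}\ref{f:ri:ii} for \ref{nc:core:int:3}. The only cosmetic difference is that in \ref{nc:core:int:3} the paper applies Fact~\ref{f:ri}\ref{f:ri:ii} to the convex set $\ri E$ rather than to $\closu E$, and leaves the sandwich $\aff(\ri E)\subseteq\aff E\subseteq\aff(\closu E)$ implicit; your version makes it explicit, which is arguably cleaner.
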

\begin{proof}
\ref{nc:core:int:1}: Since $E$ is nonempty and nearly convex,
$\closu{E}$ is nonempty and convex by Fact~\ref{l:nearconset}\ref{o:c}.
Using Fact~\ref{f:nc:int:cl}
and Fact~\ref{f:c:core:int}
applied to the convex set $\closu{E}$ we have
\begin{equation}
\intr(\closu{E})=\intr E\subseteq \core E\subseteq \core (\closu{E})=\intr(\closu{E}).
\end{equation}
Hence $\intr E= \core E$, as claimed.

\ref{nc:core:int:2}: Notice that
Fact~\ref{f:nc:int:cl} gives
$\fady\neq \intr E= \intr (\closu{E})$.
Moreover since $E$ is nearly convex, Fact~\ref{l:nearconset}
implies that $\ri E=\ri (\closu{E})$.
Applying Fact~\ref{f:c:core:int} to
the convex set $\closu{E}$ gives
\begin{equation}
\intr E=\intr (\closu{E})=\ri  (\closu{E})=\ri E.
\end{equation}

\ref{nc:core:int:3}: 
It follows from Fact~\ref{l:nearconset} that
\begin{equation}\label{eq:rec:n}
\ri E=\ri (\ri E)\;\;\; \text{and}\;\;\; \closu{(\ri E)}=\closu{E}.
\end{equation}
Using \eqref{eq:rec:n} and Fact~\ref{f:ri}\ref{f:ri:ii} applied to the convex set $\ri E$
we have
\begin{equation}\label{eq:aff:2}
\aff (\ri E)=\aff [\closu{(\ri E)}]=\aff (\closu{ E}).
\end{equation}
\end{proof}

Under mild assumptions, a nearly convex set is in fact
convex as our next result shows.

\begin{definition} We say that a set $E\subseteq X$ is relatively
strictly convex if $]x,y[ \subseteq \ri E$ whenever $x,y\in E$.
\end{definition}

\begin{proposition}Let $E\subseteq \RR^n$ be nearly convex. Then the following hold:
\begin{enumerate}
\item\label{i:strict}  If $E$ is relatively strictly convex,
then $E$ is convex.
\item\label{i:open} If $E$ is  open, then $E$ is convex.
\item\label{i:closed} If $E$ is closed, then $E$ is convex.
\item\label{i:boundary} If  for every $x,y\in E\setminus\ri E$,
we have $[x,y]\subseteq E$, then $E$ is convex.
\end{enumerate}
\end{proposition}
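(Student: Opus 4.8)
The plan is to prove all four items by reducing each hypothesis to the single criterion from Theorem~\ref{nc:charac}\ref{t:nc:charac:3} (or its consequences), namely that for a nearly convex $E$ we have $[x,y[\subseteq\ri E$ whenever $x\in\ri E$ and $y\in E$, together with the structural facts $\closu{E}$ is convex, $\ri E=\ri(\closu E)$, and $\closu{(\ri E)}=\closu E$ from Fact~\ref{l:nearconset}.

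First, for \ref{i:strict}: if $E$ is relatively strictly convex, take any $x,y\in E$. The hypothesis gives $]x,y[\subseteq\ri E\subseteq E$, and adding the endpoints gives $[x,y]\subseteq E$, so $E$ is convex by definition. (One should double-check the degenerate case $x=y$, which is trivial.) Item \ref{i:open} then follows as a special case: if $E$ is open then $\ri E=E$ (indeed $\intr E=E$, and by Theorem~\ref{nc:core:int}\ref{nc:core:int:2}, since $\intr E\neq\varnothing$, $\intr E=\ri E$), so for $x,y\in E=\ri E$ Theorem~\ref{nc:charac}\ref{t:nc:charac:3} gives $[x,y[\subseteq\ri E=E$, and $y\in E$ too, hence $[x,y]\subseteq E$; thus $E$ is relatively strictly convex and \ref{i:strict} applies. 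Alternatively, and more directly: an open set is its own relative interior, which is convex by Fact~\ref{l:nearconset}\ref{o:c}.

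For \ref{i:closed}: if $E$ is closed then $E=\closu E$, which is convex by Fact~\ref{l:nearconset}\ref{o:c}. This is immediate.

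The only item with any real content is \ref{i:boundary}. Here the plan is: assume $[x,y]\subseteq E$ for all $x,y\in E\setminus\ri E$, and take arbitrary $x,y\in E$; we want $[x,y]\subseteq E$. Split into cases. If both $x,y\in\ri E$, then since $\ri E$ is convex (Fact~\ref{l:nearconset}) the segment lies in $\ri E\subseteq E$. If both lie in $E\setminus\ri E$, the hypothesis directly gives $[x,y]\subseteq E$. The remaining case is $x\in\ri E$, $y\in E\setminus\ri E$: then Theorem~\ref{nc:charac}\ref{t:nc:charac:3} gives $[x,y[\subseteq\ri E$, and $y\in E$, so $[x,y]\subseteq E$. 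In all cases $[x,y]\subseteq E$, so $E$ is convex. The mild subtlety I expect to be the main (though still modest) obstacle is making sure the mixed case is handled by the right tool — one must invoke the ``$[x,y[$'' access property for nearly convex sets (Theorem~\ref{nc:charac}\ref{t:nc:charac:3}, equivalently Fact~\ref{l:access} applied to the convex set $\closu E$ together with $\ri(\closu E)=\ri E$) rather than some convexity statement about $E$ itself, which we do not yet have. Once the three cases are assembled, convexity of $E$ is immediate from the definition.
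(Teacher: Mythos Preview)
Your proposal is correct and follows essentially the same approach as the paper: item~\ref{i:strict} is identical, items~\ref{i:open} and~\ref{i:closed} use the same reduction to convexity of $\ri E$ (resp.\ $\closu E$) via Theorem~\ref{nc:core:int}\ref{nc:core:int:2} and Fact~\ref{l:nearconset}, and your three-case split in~\ref{i:boundary} is exactly the paper's argument spelled out more carefully (the paper merges your first and third cases into a single ``one of $x,y$ lies in $\ri E$'' clause without citing Theorem~\ref{nc:charac}\ref{t:nc:charac:3} explicitly). No gaps.
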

\begin{proof}
\ref{i:strict}: Let $x,y\in E$. As $E$ is relatively strictly convex, $]x,y[\subseteq \ri E\subseteq E$, so
$[x,y]\subseteq E$. Hence $E$ is convex.

\ref{i:open}: For a nearly convex set $E$, we have
$\ri E=\ri C$ where $C$ is a convex set. When $E$ is open and $\inte E\neq \varnothing$, by
Theorem~\ref{nc:core:int}\ref{nc:core:int:2} we have
$E=
\ri E=\ri C=\inte C$ is convex.
Hence $E$ is convex.

\ref{i:closed}: Since there exists a convex set $C$ such that
$C\subseteq E\subset\closu{C}$ and $E$ closed, we have $E=\closu{C}$, so $E$ is convex.

\ref{i:boundary} Let $x,y\in E$. If one of $x,y$ is in $\ri E$, then $[x,y]\subseteq E$;
if both $x,y\in E\setminus \ri E$, then the assumption guarantees $[x,y]\subseteq E$.
Hence $E$ is convex.
\end{proof}

\section{Calculus and relative interiors of nearly convex sets}\label{s:calculus:interior}
In this section we extend the calculus for convex sets provided in \cite[Section 6 and Section 8]{Rock70} to nearly convex sets.
More precisely, we study the properties of images and pre-images of nearly convex sets under linear transforms.
One distinguished feature is that when two nearly convex sets are nearly equal,
their linear images or
linear inverse images are also nearly equal.
We start with

\begin{proposition}\label{p:productset}
\begin{enumerate}
\item\label{i:scalar}
Let  $E\subseteq\RR^n$ be nearly convex, and $\lambda\in \RR$. Then
$\reli (\lambda E) = \lambda \reli E.$
\item\label{i:product}
If $(\forall i\in I)\ E_i\subseteq \RR^n$ be nearly convex, then
$$\reli(E_1 \times\cdots\times E_m) = \reli E_1 \times\cdots\times \reli E_m,$$
and
$\closu{(E_1 \times\cdots\times E_m)} =\closu{(\reli E_1)} \times\cdots\times \closu{(\reli E_m)}.$
\end{enumerate}
\end{proposition}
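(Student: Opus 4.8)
The plan is to reduce both identities to the corresponding classical facts for \emph{convex} sets, by replacing each nearly convex set by a convex set that sandwiches it and then transporting relative interiors and closures via Fact~\ref{l:nearconset}.

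For the scalar-multiplication identity $\reli(\lambda E)=\lambda\reli E$, fix a convex set $C$ with $C\subseteq E\subseteq\closu{C}$. The case $\lambda=0$ is trivial (both sides equal $\{0\}$ when $E\neq\varnothing$, using $\reli E\neq\varnothing$ from Fact~\ref{l:nearconset}\ref{t:c}, and both are empty otherwise). For $\lambda\neq0$ the map $x\mapsto\lambda x$ is a homeomorphism of $\RR^n$, so $\closu{(\lambda C)}=\lambda\closu{C}$, whence $\lambda C\subseteq\lambda E\subseteq\closu{(\lambda C)}$ and $\lambda E$ is nearly convex with witnessing convex set $\lambda C$. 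I then chain three equalities: $\reli(\lambda E)=\reli(\lambda C)$ (Fact~\ref{l:nearconset} applied to $\lambda E$), $\reli(\lambda C)=\lambda\reli C$ (Fact~\ref{f:ri}\ref{f:ri:scal}), and $\reli C=\reli E$ (Fact~\ref{l:nearconset} applied to $E$).

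For the product statement, pick convex sets $C_i$ with $C_i\subseteq E_i\subseteq\closu{C_i}$ for $i\in I$. Since $\closu{(C_1\times\cdots\times C_m)}=\closu{C_1}\times\cdots\times\closu{C_m}$, the convex set $C:=C_1\times\cdots\times C_m$ satisfies $C\subseteq E_1\times\cdots\times E_m\subseteq\closu{C}$, so the product is nearly convex and Fact~\ref{l:nearconset} gives $\reli(E_1\times\cdots\times E_m)=\reli C$ and $\closu{(E_1\times\cdots\times E_m)}=\closu{C}$; together with $\reli E_i=\reli C_i$ and $\closu{E_i}=\closu{(\reli E_i)}$ (again Fact~\ref{l:nearconset}), everything reduces to proving $\reli C=\reli C_1\times\cdots\times\reli C_m$ and $\closu{C}=\closu{C_1}\times\cdots\times\closu{C_m}$ for convex $C_i$. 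The closure identity is elementary point-set topology. The relative-interior identity is the one ingredient not quoted verbatim in Section~\ref{s:facts}, and I would obtain it by block embedding: let $A_i\colon\RR^n\to\RR^{mn}$ be the linear map placing its argument in the $i$-th block of coordinates and zeros elsewhere, so that $C=A_1C_1+\cdots+A_mC_m$; iterating Fact~\ref{f:ri}\ref{f:ri:sum} gives $\reli C=\reli(A_1C_1)+\cdots+\reli(A_mC_m)$, Fact~\ref{f:convlinear} gives $\reli(A_iC_i)=A_i(\reli C_i)$, and summing these coordinate blocks reassembles $\reli C_1\times\cdots\times\reli C_m$.

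I do not expect a genuine obstacle: the only points requiring care are the degenerate cases ($\lambda=0$, or some $E_i$ empty), which are all trivial once one recalls $\reli E\neq\varnothing$ for nonempty nearly convex $E$, and supplying the convex product-relative-interior formula, which---as sketched---is a short consequence of the sum rule Fact~\ref{f:ri}\ref{f:ri:sum} and the linear-image rule Fact~\ref{f:convlinear} already recorded. The conceptual content is entirely housed in Fact~\ref{l:nearconset}: it suffices to pass to a convex representative and quote Rockafellar's calculus.
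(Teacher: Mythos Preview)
Your proposal is correct and follows essentially the same approach as the paper: reduce to the convex case via Fact~\ref{l:nearconset} and then invoke Rockafellar's calculus (Fact~\ref{f:ri}\ref{f:ri:scal} and the product formula for relative interiors). The only cosmetic difference is that the paper uses the closures $\closu{E_i}$ as the convex representatives rather than your witnessing sets $C_i$, and you are more explicit than the paper in deriving the convex identity $\reli(C_1\times\cdots\times C_m)=\reli C_1\times\cdots\times\reli C_m$, which the paper applies without citation.
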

\begin{proof}
\ref{i:scalar} As $E$ is nearly convex set, there exists a convex set $C\subseteq\RR^n$ such that
$\reli E=\reli C$. Then
$\reli(\lambda E)=\reli(\lambda C)=\lambda \reli C=\lambda \reli E$ by Fact~\ref{f:ri}\ref{f:ri:scal}.

\ref{i:product} By Fact~\ref{l:nearconset},
we have
\begin{align*}
\reli(E_1 \times\cdots\times E_m) & =\reli[\closu({E_{1}\times\cdots\times E_{m}})]=
\reli(\closu{E_{1}}\times\cdots\times\closu{E_{m}})\\
& =
\reli(\closu{E_{1}})\times\cdots\times \reli(\closu{E_{m}})=\reli E_{1}\times \cdots\times \reli E_{m}.
\end{align*}
Also by Fact~\ref{l:nearconset},
$\closu(E_1 \times\cdots\times E_m)=\closu{E_{1}}\times\cdots\times \closu{E_{m}}
=\closu{(\reli E_{1})}\times\cdots\times \closu{(\reli E_{m})}.$
\end{proof}

\begin{theorem}\label{thm:lin:A}
Let $E$ be a nearly convex set in $\RR^n$ and
let $A: \RR^n \to \RR^m$ be a linear transformation.
Then the following hold:
\begin{enumerate}
\item\label{thm:lin:A:1}
$A(E)$ is nearly convex. 
\item\label{thm:lin:A:2}
$AE\approx A(\ri E)\approx A(\closu{E})$.
\item \label{201402264}
$\reli (AE) = A \reli E.$
\end{enumerate}
\end{theorem}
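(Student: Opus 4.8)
The plan is to reduce everything to the convex case. Since $E$ is nearly convex, Fact~\ref{l:nearconset} lets me use the canonical convex core $C:=\ri E$: it is convex, and
\[
\ri E\subseteq E\subseteq\closu E=\closu{\ri E}.
\]
(The case $E=\varnothing$ is trivial, so assume $E\neq\varnothing$.)

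For \ref{thm:lin:A:1} I would simply apply $A$ to this sandwich. The set $A(\ri E)$ is convex, being a linear image of a convex set. Using $\closu E=\closu{\ri E}$ together with the inclusion $A(\closu{\ri E})\subseteq\closu{A(\ri E)}$ from Fact~\ref{f:convlinear}, I obtain
\[
A(\ri E)\subseteq A(E)\subseteq A(\closu E)=A(\closu{\ri E})\subseteq\closu{A(\ri E)},
\]
which is precisely the statement that $A(E)$ is nearly convex, with convex core $A(\ri E)$; the same chain shows $A(\closu E)$ is nearly convex with the same core.

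For \ref{thm:lin:A:2}, the display above, read together with the fact that the convex set $A(\ri E)$ is nearly equal to its own closure (a convex set $D$ always satisfies $D\approx\closu D$ by Fact~\ref{f:ri}\ref{f:ri:iii}), lets me invoke Fact~\ref{sq:thm} twice to get $A(E)\approx A(\ri E)$ and $A(\closu E)\approx A(\ri E)$; transitivity of near equality then gives $AE\approx A(\ri E)\approx A(\closu E)$. For \ref{201402264}, since $AE$ and $A(\ri E)$ are both nearly convex with $AE\approx A(\ri E)$, Fact~\ref{ri:part} gives $\reli(AE)=\reli(A(\ri E))$. As $\ri E$ is convex, Fact~\ref{f:convlinear} applies to it and yields $\reli(A(\ri E))=A(\reli(\ri E))$, while $\reli(\ri E)=\ri E$ follows from the near equality $\ri E\approx\reli(\ri E)$ contained in Fact~\ref{l:nearconset}. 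Combining, $\reli(AE)=A(\ri E)=A\reli E$.

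I do not expect a genuine obstacle: the construction is driven entirely by applying $A$ to the two-sided inclusion defining near convexity. The only point requiring care is to apply the convex linear-image facts (Fact~\ref{f:convlinear}) to the convex set $\ri E$ rather than to $E$ itself, and to keep straight that ``squeezed between a convex set and its closure'' is exactly what ``nearly convex and nearly equal to that convex set'' means (Fact~\ref{l:nearconset}).
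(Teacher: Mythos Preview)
Your proof is correct and follows essentially the same approach as the paper: both reduce to the convex set $\ri E$, establish the chain $A(\ri E)\subseteq A(E)\subseteq A(\closu E)\subseteq\closu{A(\ri E)}$ via Fact~\ref{f:convlinear}, and read off all three conclusions from it. The only cosmetic differences are that the paper deduces $A(\closu E)\approx A(\ri E)$ by re-applying the first near equality to the convex set $\closu E$ (rather than via Fact~\ref{sq:thm}), and in part~\ref{201402264} the paper routes through $A(\closu E)$ instead of $A(\ri E)$.
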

\begin{proof}
\ref{thm:lin:A:1}:
Since $E$ is nearly convex we have $\ri E$ is convex. It follows from Fact~\ref{l:nearconset}
that $\closu{E}=\closu{\ri E}$.
Moreover Fact~\ref{f:convlinear} applied to the convex set $\ri E$ implies that
$A(\closu{(\ri E)})\subseteq \closu{A(\ri E)}$.
Therefore,
\begin{equation}\label{AC:nc}
A(\ri E)\subseteq A(E)\subseteq A(\closu{E})= A(\closu{(\ri E)})\subseteq \closu{A(\ri E)}.
\end{equation}
Since $A$ is linear and $\ri E$ is convex, we conclude that
$A(\ri E)$ is convex, hence by \eqref{AC:nc} $A(E)$ is nearly convex.

\ref{thm:lin:A:2}: It follows from Fact~\ref{l:nearconset}, \eqref{AC:nc} and the fact that $A(\ri E)$ is convex
that
\begin{equation}\label{thm:lin:A:eq}
AE\approx A(\ri E).
\end{equation}
To show $ A(\ri E)\approx A(\closu{E})$, applying \eqref{thm:lin:A:eq}
to the convex set $\closu{E} $ we obtain
$A(\closu{E})\approx A(\ri (\closu{E}))$.
Since $E$ is nearly convex, we have $\ri E= \ri (\closu{E}) $ by Fact~\ref{l:nearconset},
hence $A(\closu{E})\approx A(\ri {E})$, and \ref{thm:lin:A:2} holds.

\ref{201402264}: By \ref{thm:lin:A:2} and Fact~\ref{f:convlinear}, we have
$$\ri (AE)=\ri[A(\closu E)]=A(\ri (\closu{E}))=A(\ri E).$$
\end{proof}

\begin{remark} Theorem~\ref{thm:lin:A}\ref{thm:lin:A:1}\&\ref{201402264} was proved in \cite[Lemmas 2.3, 2.4]{Bot2008},
our proof is different from theirs.
\end{remark}

\begin{corollary}
Let $(\forall i\in I)\ E_i$ be nearly convex sets in $\RR^n$. Then
$$\reli(E_1+\cdots+E_{m})=\reli E_1 +\cdots+ \reli E_m.$$
\end{corollary}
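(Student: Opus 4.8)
The plan is to reduce the sum of nearly convex sets to a linear image of a product of nearly convex sets, and then apply the two calculus results just established: Proposition~\ref{p:productset}\ref{i:product} on products and Theorem~\ref{thm:lin:A}\ref{201402264} on linear images. Concretely, consider the addition map $A\colon (\RR^n)^m \to \RR^n$ defined by $A(x_1,\dots,x_m) = x_1+\cdots+x_m$, which is linear, and the product set $E := E_1\times\cdots\times E_m \subseteq (\RR^n)^m$. Then $A(E) = E_1+\cdots+E_m$ by definition of the Minkowski sum.

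The steps, in order, would be: first, observe that each $E_i$ is nearly convex in $\RR^n$, so by Proposition~\ref{p:productset}\ref{i:product} the product $E = E_1\times\cdots\times E_m$ is nearly convex in $(\RR^n)^m$ and $\reli E = \reli E_1\times\cdots\times\reli E_m$. Second, apply Theorem~\ref{thm:lin:A}\ref{201402264} to the nearly convex set $E$ and the linear map $A$ above: this gives $\reli(AE) = A(\reli E)$. Third, substitute: the left side is $\reli(E_1+\cdots+E_m)$, and the right side is $A(\reli E_1\times\cdots\times\reli E_m) = \reli E_1+\cdots+\reli E_m$, again by the definition of the addition map. Chaining these equalities yields the claim. (As a byproduct, Theorem~\ref{thm:lin:A}\ref{thm:lin:A:1} also shows $E_1+\cdots+E_m$ is nearly convex, so $\reli$ of it is meaningful.)

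There is essentially no hard part here — the result is a formal corollary, and the entire content has already been packaged into Proposition~\ref{p:productset} and Theorem~\ref{thm:lin:A}. The only point requiring the slightest care is the bookkeeping that the product-of-relative-interiors formula matches up correctly under the addition map, i.e.\ that $A$ sends $\reli E_1\times\cdots\times\reli E_m$ onto exactly $\reli E_1+\cdots+\reli E_m$; this is immediate from how Minkowski sums are defined, but it is the one spot where one must make sure the identifications $(\RR^n)^m \leftrightarrow$ "$m$-tuples" are used consistently. No new estimates, no topology beyond what Theorem~\ref{thm:lin:A} already provides, and no appeal to $\bigcap\reli C_i\neq\varnothing$-type qualification conditions are needed, since the relative-interior calculus for linear images of nearly convex sets holds unconditionally.
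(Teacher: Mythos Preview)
Your proposal is correct and follows essentially the same route as the paper: define the addition map $A:(\bx_1,\ldots,\bx_m)\mapsto \sum_i \bx_i$, set $E:=E_1\times\cdots\times E_m$, and combine Proposition~\ref{p:productset}\ref{i:product} with Theorem~\ref{thm:lin:A}\ref{201402264}. The only small remark is that Proposition~\ref{p:productset}\ref{i:product} as stated does not literally assert that the product is nearly convex, but this is immediate from the definition (product of the underlying convex sets), so the application of Theorem~\ref{thm:lin:A} is justified.
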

\begin{proof}
Apply Theorem~\ref{thm:lin:A}\ref{201402264} and Proposition~\ref{p:productset}\ref{i:product}
 with $A:(\bx_{1},\ldots,\bx_{m}) \mapsto \sum_{i\in I}\bx_{i}$
where $\bx_{i}\in\RR^n$, and
$E:=E_{1}\times \cdots\times E_{m}$.
\end{proof}

\begin{theorem}\label{201409121}
Let $A:\RR^n \to \RR^m$ be a linear transformation
and let $E \subseteq \RR^m$ be a nearly convex set
such that $A^{-1}(\reli E) \neq \varnothing$. Then
\begin{enumerate}
\item\label{i:inversesetn} $A^{-1}E$ is nearly convex,
\item \label{201409121:1}$\reli(A^{-1}E) = A^{-1}(\reli E),$
\item \label{201409121:2} $\closu{[A^{-1}(E)]}=A^{-1}(\closu{E}).$
\end{enumerate}
\end{theorem}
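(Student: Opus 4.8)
The plan is to reduce everything to the corresponding statements for convex sets (Fact~\ref{f:convlinearinv}) by passing through $\ri E$, exactly as was done for the forward image in Theorem~\ref{thm:lin:A}. Since $E$ is nearly convex, $\ri E$ is a convex set with $\ri E \subseteq E \subseteq \closu{E} = \closu{(\ri E)}$, and by Fact~\ref{l:nearconset} we also have $\ri E = \ri(\closu E)$ and $\ri E = \ri(\ri E)$. The hypothesis $A^{-1}(\ri E) \neq \varnothing$ is precisely the hypothesis needed to apply Fact~\ref{f:convlinearinv} to the convex set $\ri E$ (and also to the convex set $\closu E$, since $\ri(\closu E) = \ri E$).

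First I would prove \ref{i:inversesetn}. Applying the monotone operation $A^{-1}(\cdot)$ to the chain $\ri E \subseteq E \subseteq \closu{E}$ gives
\begin{equation}\label{eq:invchain}
A^{-1}(\ri E) \subseteq A^{-1}(E) \subseteq A^{-1}(\closu{E}).
\end{equation}
By Fact~\ref{f:convlinearinv} applied to the convex set $\ri E$ (whose closure is $\closu E$ and whose relative interior is $\ri E$, using Fact~\ref{l:nearconset}), we get $\closu{[A^{-1}(\ri E)]} = A^{-1}(\closu{E})$. Combining this with \eqref{eq:invchain} exhibits $A^{-1}(E)$ as squeezed between the convex set $A^{-1}(\ri E)$ and its closure, so $A^{-1}(E)$ is nearly convex; in fact $A^{-1}(E) \approx A^{-1}(\ri E)$ by Fact~\ref{sq:thm}.

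Next, for \ref{201409121:1} and \ref{201409121:2} I would use the near equality $A^{-1}(E) \approx A^{-1}(\ri E)$ together with Fact~\ref{f:convlinearinv}. Since both sides of the near equality are nearly convex, Fact~\ref{ri:part} lets me compute relative interiors and closures from either representative: $\ri[A^{-1}(E)] = \ri[A^{-1}(\ri E)] = A^{-1}(\ri(\ri E)) = A^{-1}(\ri E)$, where the middle equality is Fact~\ref{f:convlinearinv} on the convex set $\ri E$ and the last is Fact~\ref{l:nearconset}. This is \ref{201409121:1}. Similarly $\closu{[A^{-1}(E)]} = \closu{[A^{-1}(\ri E)]} = A^{-1}(\closu{(\ri E)}) = A^{-1}(\closu{E})$, giving \ref{201409121:2}.

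I expect the only real subtlety — not so much an obstacle as a point to state carefully — is the bookkeeping around which convex set the hypothesis $A^{-1}(\ri E) \neq \varnothing$ licenses Fact~\ref{f:convlinearinv} for: one must observe that $\ri(\ri E) = \ri E$ and $\ri(\closu E) = \ri E$ (both from Fact~\ref{l:nearconset}) so that the condition ``$A^{-1}$ of the relative interior is nonempty'' holds simultaneously for $\ri E$, $\closu E$, and (trivially) $E$ itself. Everything else is routine application of the squeeze/near-equality machinery already set up in Facts~\ref{l:nearconset}, \ref{ri:part}, \ref{sq:thm}, mirroring the proof of Theorem~\ref{thm:lin:A}.
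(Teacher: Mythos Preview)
Your proposal is correct and follows essentially the same route as the paper: squeeze $A^{-1}(E)$ between $A^{-1}$ of a convex set and its closure, then invoke Fact~\ref{f:convlinearinv}. The only cosmetic difference is that the paper works with an abstract convex $C$ from the definition of near convexity, whereas you take the canonical choice $C=\ri E$; since $\ri E=\ri C$ by Fact~\ref{l:nearconset}, the two arguments are interchangeable.
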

\begin{proof} As $E$ is nearly convex, there exists
a convex set $C$ such that $C \subseteq E \subseteq \closu{C}$
and $\reli E = \reli C$. The assumption $A^{-1}(\reli E)\neq \varnothing$
is equivalent to $A^{-1}(\reli C) \neq \varnothing$, so $A^{-1}(\reli \closu{C)} \neq \varnothing$
by
Fact~\ref{f:ri}\ref{f:ri:iii}.
Because $A^{-1}(\reli C) \neq \varnothing$, by Fact~\ref{f:convlinearinv},
we have $\closu {[A^{-1}(C)]} = A^{-1}(\closu{C})$. Then
$$A^{-1}(C) \subseteq A^{-1}(E) \subseteq A^{-1}(\closu{C})
\subseteq \closu{[A^{-1}(\closu{C})]} = \closu{[A^{-1}({C})]}$$
which gives
$A^{-1}(C) \subseteq A^{-1}(E) \subseteq \closu{[A^{-1}(C)]},$
so \ref{i:inversesetn} holds.
It also follows that $\reli(A^{-1}(E)) = \reli (A^{-1}(C))$.
By Fact~\ref{f:convlinearinv},
$$\reli(A^{-1}(C)) = A^{-1}(\reli C) = A^{-1}(\reli E).$$
Therefore \ref{201409121:1} holds.
\ref{201409121:2} follows from
$$\closu{[A^{-1}(E)]} = \closu{[A^{-1}(C)]}=A^{-1}(\closu{C}) = A^{-1}(\closu{E}).$$
\end{proof}

\begin{remark}
Theorem~\ref{201409121} was proven in \cite[Theorem~2.2, Corollary 2.1]{Bot2008}.
However our proof is different.
\end{remark}

\begin{theorem}\label{thm:lin:Ainv}
Let $A:\RR^n \to \RR^m$ be a linear transformation
and let $E \subseteq \RR^m$ be a nearly convex set
such that $A^{-1}(\reli E) \neq \varnothing$. Then
$A^{-1}(E)\approx A^{-1}(\ri E)\approx A^{-1}(\closu{E})$.
\end{theorem}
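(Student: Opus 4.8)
The plan is to piggy-back on Theorem~\ref{201409121}, which already does the heavy lifting by establishing that $A^{-1}(E)$ is nearly convex together with the two identities $\reli(A^{-1}E) = A^{-1}(\reli E)$ and $\closu{[A^{-1}(E)]} = A^{-1}(\closu{E})$. Once those are in hand, the near-equalities in the statement are essentially bookkeeping about $\reli$ and $\closu{}$ of the various sets involved, exactly as in the proof of Theorem~\ref{thm:lin:A}\ref{thm:lin:A:2}.

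First I would record that the hypothesis $A^{-1}(\reli E)\neq\varnothing$ is invariant when $E$ is replaced by $\ri E$ or by $\closu{E}$: indeed, since $E$ is nearly convex, Fact~\ref{l:nearconset} gives $\reli E = \reli(\ri E) = \reli(\closu{E})$, so all three preimages $A^{-1}(\reli E)$, $A^{-1}(\reli(\ri E))$, $A^{-1}(\reli(\closu E))$ coincide and are nonempty. Hence Theorem~\ref{201409121} applies verbatim to each of the three sets $E$, $\ri E$, $\closu E$ (the latter two being convex, hence nearly convex).

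Next, to prove $A^{-1}(E)\approx A^{-1}(\ri E)$ I would apply Theorem~\ref{201409121}\ref{201409121:1} and \ref{201409121:2} to both $E$ and $\ri E$ and compare: $\reli(A^{-1}E) = A^{-1}(\reli E) = A^{-1}(\reli(\ri E)) = \reli(A^{-1}(\ri E))$, and similarly $\closu{[A^{-1}E]} = A^{-1}(\closu E) = A^{-1}(\closu{(\ri E)}) = \closu{[A^{-1}(\ri E)]}$, where I use $\closu{(\ri E)} = \closu E$ from Fact~\ref{l:nearconset}. By Definition~\ref{d:ne} this is precisely $A^{-1}(E)\approx A^{-1}(\ri E)$. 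The near-equality $A^{-1}(E)\approx A^{-1}(\closu E)$ is obtained in exactly the same way, using $\reli(\closu E) = \reli E$ and $\closu{(\closu E)} = \closu E$. Transitivity of $\approx$ (or just chaining the two displays) then yields the full chain $A^{-1}(E)\approx A^{-1}(\ri E)\approx A^{-1}(\closu E)$.

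There is no serious obstacle here; the only thing to be careful about is confirming that the standing hypothesis transfers to $\ri E$ and $\closu E$ so that Theorem~\ref{201409121} is legitimately invoked on those sets — that is the one place where near convexity of $E$ (via Fact~\ref{l:nearconset}) is genuinely used. Alternatively, and perhaps more cleanly, one could bypass Theorem~\ref{201409121} for $\ri E$ and $\closu E$ entirely: since $C\subseteq E\subseteq\closu C$ implies $A^{-1}(C)\subseteq A^{-1}(E)\subseteq A^{-1}(\closu C) = \closu{[A^{-1}(C)]}$, and since $\ri E$ and $\closu E$ are sandwiched between $C$ and $\closu C$ as well, all four preimages $A^{-1}(C)$, $A^{-1}(\ri E)$, $A^{-1}(E)$, $A^{-1}(\closu E)$ lie between the convex set $A^{-1}(C)$ and its closure, so Fact~\ref{sq:thm} gives that they are all nearly equal to $A^{-1}(C)$ and hence to each other. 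I would likely present this second argument as the short proof.
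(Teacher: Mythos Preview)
Your first approach is correct and is a legitimate alternative to the paper's argument. You apply Theorem~\ref{201409121} three times (to $E$, $\ri E$, and $\closu E$) and then read off both halves of Definition~\ref{d:ne} directly. The paper instead works with the classical Fact~\ref{f:convlinearinv} applied only to the convex sets $\ri E$ and $\closu E$: from $\ri(\ri E)=\ri E=\ri(\closu E)$ it obtains $\ri A^{-1}(\ri E)=A^{-1}(\ri E)=\ri A^{-1}(\closu E)$, then invokes Fact~\ref{ri:part} to conclude $A^{-1}(\ri E)\approx A^{-1}(\closu E)$, and finally squeezes $A^{-1}(E)$ between them via Fact~\ref{sq:thm}. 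Your route is a touch more economical in that it reuses Theorem~\ref{201409121} wholesale; the paper's route is self-contained at the level of convex-set facts and makes the squeeze structure explicit.

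Your second ``short'' argument, however, has a slip. You assert that $\ri E$ is sandwiched between $C$ and $\closu C$, but in general $\ri E=\ri C\subseteq C$ and the reverse inclusion $C\subseteq\ri C$ fails unless $C$ is relatively open. So $A^{-1}(\ri E)$ need not contain $A^{-1}(C)$, and Fact~\ref{sq:thm} is not directly applicable to it as you wrote. The fix is immediate: take the base set to be $\ri E$ itself (which is convex), so that $\ri E\subseteq \ri E,\,E,\,\closu E\subseteq\closu{(\ri E)}=\closu E$, and then Fact~\ref{f:convlinearinv} gives $\closu{[A^{-1}(\ri E)]}=A^{-1}(\closu{(\ri E)})=A^{-1}(\closu E)$, after which Fact~\ref{sq:thm} handles all three preimages at once. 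With that correction the second argument is essentially the paper's proof.
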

\begin{proof}
First notice that since $E$ is nearly convex we have
$\ri E$ and
$\closu{E}$ are convex and $\ri E=\ri{(\closu{E})}$,
hence $A^{-1}(\reli (\closu{E}))=A^{-1}(\reli E)  \neq \varnothing$.
Using Fact~\ref{l:nearconset} we have
\begin{equation}\label{sub:0}
\ri E=\ri (\ri E).
\end{equation}
Applying Fact~\ref{f:convlinearinv} to the convex sets $\ri E$ and $\closu{E}$ we have
\begin{align}
A^{-1}(\ri E)&=A^{-1}(\ri (\ri E))=\ri A^{-1}(\ri E)\label{sub:1}\\
A^{-1}(\ri E)&=A^{-1}( \ri (\closu{E}))=\ri A^{-1}(\closu{E}).\label{sub:2}
\end{align}
 Since $\ri E$ and
$\closu{E}$ are convex we have
\begin{equation} \label{sub:2+}
A^{-1}(\ri E) \quad\text{and}\quad
A^{-1}(\closu{E}) \quad\text{are convex}.
\end{equation}
Moreover, it follows from \eqref{sub:1} and \eqref{sub:2} that
$\ri A^{-1}(\ri E)=\ri A^{-1}(\closu{E})$.
Therefore, using Fact~\ref{ri:part} we conclude that
\begin{equation}\label{sub:3}
A^{-1}(\ri E)
\approx A^{-1}(\closu{E}).
\end{equation}
Notice that $A^{-1}(\ri E) \subseteq A^{-1}(E)
\subseteq  A^{-1}(\closu{E}).$ Therefore using \eqref{sub:3} and Fact~\ref{sq:thm}
we conclude that  $A^{-1}(E)\approx A^{-1}(\ri E)\approx A^{-1}(\closu{E})$.
\end{proof}

The next result generalizes Rockafellar's Fact~\ref{f:riintersec} to nearly convex sets. Our proof
is different from the one given in \cite[Theorem 2.1]{Bot2008}.

\begin{corollary}\label{ncriintersecm}
Let $E_i$ be nearly convex sets in $\RR^n$ for all $i\in I$
such that $\bigcap\limits_{i=1}^m \reli E_i \neq \varnothing$.
Then the following hold:
\begin{enumerate}
\item\label{it:ncriintersecm:1}
$\bigcap\limits_{i=1}^m E_i$ is nearly convex.
\item\label{it:ncriintersecm:2}
$\bigcap\limits_{i=1}^m \reli E_i = \reli \bigcap\limits_{i=1}^m E_i.$
\item \label{it:ncriintersecm:3}
$\closu{\bigcap\limits_{i=1}^m E_i} = \bigcap\limits_{i=1}^m \closu{E_i}.$
\end{enumerate}
\end{corollary}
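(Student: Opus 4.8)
The plan is to push everything down to Rockafellar's intersection theorem for convex sets (Fact~\ref{f:riintersec}) by replacing each nearly convex $E_i$ with its relative interior. For each $i\in I$ put $C_i:=\reli E_i$. By Fact~\ref{l:nearconset}\ref{o:c} every $C_i$ is convex, and since $\bigcap_{i=1}^m\reli E_i\neq\varnothing$ each $E_i$ is nonempty, so $C_i\neq\varnothing$ by Fact~\ref{l:nearconset}\ref{t:c}. Because $E_i\approx\reli E_i$ (Fact~\ref{l:nearconset}), we have $\closu{C_i}=\closu{E_i}$ and $\reli C_i=\reli(\reli E_i)=\reli E_i$; in particular $\bigcap_{i=1}^m\reli C_i=\bigcap_{i=1}^m\reli E_i\neq\varnothing$, so Fact~\ref{f:riintersec} applies to the family $(C_i)_{i\in I}$ and yields
\begin{equation}\label{eq:plan:intersec}
\closu{\Big(\bigcap_{i=1}^m C_i\Big)}=\bigcap_{i=1}^m\closu{C_i}=\bigcap_{i=1}^m\closu{E_i}
\qquad\text{and}\qquad
\reli\Big(\bigcap_{i=1}^m C_i\Big)=\bigcap_{i=1}^m\reli C_i=\bigcap_{i=1}^m\reli E_i.
\end{equation}

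Next I would trap $\bigcap_{i=1}^m E_i$ between the convex set $\bigcap_{i=1}^m C_i$ and its closure. Intersecting the inclusions $\reli E_i\subseteq E_i\subseteq\closu{E_i}$ over $i\in I$ and using \eqref{eq:plan:intersec} gives
\begin{equation*}
\bigcap_{i=1}^m C_i=\bigcap_{i=1}^m\reli E_i\;\subseteq\;\bigcap_{i=1}^m E_i\;\subseteq\;\bigcap_{i=1}^m\closu{E_i}=\closu{\Big(\bigcap_{i=1}^m C_i\Big)}.
\end{equation*}
Since a finite intersection of convex sets is convex, this exhibits $\bigcap_{i=1}^m E_i$ as a set squeezed between a convex set and its closure, so $\bigcap_{i=1}^m E_i$ is nearly convex by definition; this is \ref{it:ncriintersecm:1}.

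For \ref{it:ncriintersecm:2} and \ref{it:ncriintersecm:3} I would propagate near equality through the squeeze. The convex (hence nearly convex) set $\bigcap_{i=1}^m C_i$ satisfies $\bigcap_{i=1}^m C_i\approx\closu{(\bigcap_{i=1}^m C_i)}$ by Fact~\ref{l:nearconset}, so the displayed inclusions above together with Fact~\ref{sq:thm} give $\bigcap_{i=1}^m C_i\approx\bigcap_{i=1}^m E_i$; that is, $\reli(\bigcap_{i=1}^m E_i)=\reli(\bigcap_{i=1}^m C_i)$ and $\closu{(\bigcap_{i=1}^m E_i)}=\closu{(\bigcap_{i=1}^m C_i)}$. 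Substituting the right-hand sides from \eqref{eq:plan:intersec} yields $\reli(\bigcap_{i=1}^m E_i)=\bigcap_{i=1}^m\reli E_i$ and $\closu{(\bigcap_{i=1}^m E_i)}=\bigcap_{i=1}^m\closu{E_i}$, which are \ref{it:ncriintersecm:2} and \ref{it:ncriintersecm:3}.

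The argument is short and there is no serious obstacle; the one point that needs care is the initial reduction — checking that replacing each $E_i$ by $\reli E_i$ (in order to feed genuinely convex sets into Fact~\ref{f:riintersec}) alters neither the relative interiors nor the closures, first factor by factor via Fact~\ref{l:nearconset} and then for the whole intersection via the squeeze lemma Fact~\ref{sq:thm}. One should also observe at the outset that $\bigcap_{i=1}^m\reli E_i\neq\varnothing$ forces each $E_i\neq\varnothing$, so that Fact~\ref{l:nearconset}\ref{t:c} is legitimately applicable.
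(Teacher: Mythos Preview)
Your proof is correct, but it follows a different path from the paper's. The paper observes that $\bigcap_{i=1}^m E_i = A^{-1}(E)$ where $A:\RR^n\to(\RR^n)^m$ is the diagonal map $A\bx:=(\bx,\ldots,\bx)$ and $E:=E_1\times\cdots\times E_m$ is nearly convex (with $\reli E=\reli E_1\times\cdots\times\reli E_m$ by Proposition~\ref{p:productset}); then Theorems~\ref{201409121} and~\ref{thm:lin:Ainv} on linear preimages deliver all three conclusions at once. Your argument instead goes straight to Rockafellar's convex intersection theorem (Fact~\ref{f:riintersec}) by replacing each $E_i$ with the convex set $\reli E_i$, and recovers the original intersection via the squeeze Fact~\ref{sq:thm}. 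Your route is more elementary and self-contained --- it does not rely on the linear-preimage calculus developed in Theorems~\ref{201409121}--\ref{thm:lin:Ainv} --- while the paper's route has the structural payoff of exhibiting the intersection result as a one-line corollary of that calculus, reinforcing the theme that nearly convex sets behave well under linear maps.
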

\begin{proof}
Define $A:\RR^n\rightarrow \RR^n\times\cdots\times \RR^n$ by
$A\bx:=(\bx,\ldots,\bx)$ where $\bx\in\RR^n$. The set
$E:=E_{1}\times\cdots\times E_{n}$ is nearly convex.
The results follow by combining Theorems~\ref{201409121}, \ref{thm:lin:Ainv}.
\end{proof}

\begin{cor}
Let $E_1$ and $E_2$ be nearly convex sets in $\RR^n$ such that
$E_1\approx E_2$ and
let $A: \RR^n \to \RR^m$ be a linear transformation.
The following hold.
\begin{enumerate}
\item\label{i:imagel} $AE_1\approx AE_2$,
\item \label{i:inverse:image}
If $A^{-1}(\reli E_{1})\neq\varnothing$, then $A^{-1}E_{1}\approx A^{-1}E_{2}$.
\end{enumerate}
\end{cor}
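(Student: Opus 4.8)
The plan is to reduce both parts to the near-equality statements for linear images (Theorem~\ref{thm:lin:A}\ref{thm:lin:A:2}) and linear preimages (Theorem~\ref{thm:lin:Ainv}) that have already been established, together with the observation that $\approx$ is transitive. First I would record the elementary fact that near equality is an equivalence relation: since, by definition, $C\approx D$ means exactly $\closu{C}=\closu{D}$ and $\reli C=\reli D$, reflexivity, symmetry, and transitivity follow immediately from the corresponding properties of set equality. The hypothesis $E_1\approx E_2$ therefore supplies both $\closu{E_1}=\closu{E_2}$ and $\reli E_1=\reli E_2$, and by Fact~\ref{ri:part} these are in turn equivalent statements for nearly convex sets.

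For \ref{i:imagel}, apply Theorem~\ref{thm:lin:A}\ref{thm:lin:A:2} separately to the nearly convex sets $E_1$ and $E_2$ to obtain $AE_1\approx A(\reli E_1)$ and $AE_2\approx A(\reli E_2)$. Since $E_1\approx E_2$ gives $\reli E_1=\reli E_2$, we have $A(\reli E_1)=A(\reli E_2)$, so transitivity of $\approx$ yields $AE_1\approx A(\reli E_1)=A(\reli E_2)\approx AE_2$, that is, $AE_1\approx AE_2$.

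For \ref{i:inverse:image}, the one point to verify before applying Theorem~\ref{thm:lin:Ainv} is that its hypothesis is available for $E_2$ as well: because $\reli E_1=\reli E_2$, the assumption $A^{-1}(\reli E_1)\neq\varnothing$ immediately gives $A^{-1}(\reli E_2)\neq\varnothing$. Then Theorem~\ref{thm:lin:Ainv} gives $A^{-1}E_1\approx A^{-1}(\reli E_1)$ and $A^{-1}E_2\approx A^{-1}(\reli E_2)$, and since $A^{-1}(\reli E_1)=A^{-1}(\reli E_2)$, transitivity of $\approx$ gives $A^{-1}E_1\approx A^{-1}E_2$.

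There is essentially no obstacle; the only bookkeeping that requires a moment's care is the transfer of the preimage nonemptiness hypothesis from $E_1$ to $E_2$, which is immediate once one unwinds the definition of $\approx$. An alternative route for \ref{i:imagel} would be to pick the convex set $C:=\reli E_1=\reli E_2$ (convex by Fact~\ref{l:nearconset}), note that $C\subseteq E_i\subseteq\closu{C}$ for $i=1,2$ (again Fact~\ref{l:nearconset}), apply $A$, and combine Fact~\ref{f:convlinear} with Fact~\ref{sq:thm}; but routing through Theorem~\ref{thm:lin:A}\ref{thm:lin:A:2} is shorter and handles \ref{i:inverse:image} in exactly the same way.
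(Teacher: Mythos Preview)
Your proof is correct and follows essentially the same route as the paper: for \ref{i:imagel} the paper also chains $AE_1\approx A(\reli E_1)=A(\reli E_2)\approx AE_2$ via Theorem~\ref{thm:lin:A}\ref{thm:lin:A:2}, and for \ref{i:inverse:image} it simply says ``apply Theorem~\ref{thm:lin:Ainv}.'' You have merely made explicit the transitivity of $\approx$ and the transfer of the nonemptiness hypothesis to $E_2$, which the paper leaves tacit.
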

\begin{proof}
Indeed, since $E_1\approx E_2$ we have $\ri E_1=\ri E_2$.
It follows from Theorem~\ref{thm:lin:A} that
\begin{equation}
AE_1\approx A(\ri E_1)=A(\ri E_2)\approx AE_2.
\end{equation}
This gives \ref{i:imagel}. For \ref{i:inverse:image}, apply Theorem~\ref{thm:lin:Ainv}.
\end{proof}

\section{Recession cones of nearly convex sets}\label{s:recession:cone}
In this section we extend several of the results for the calculus of recession cones in \cite[Chapter~8]{Rock70} to nearly convex sets. Intuitively, it would seem that most recession cone results on
convex sets should hold for nearly convex sets. Unfortunately, this is not the case.
On the positive side, we establish that $\rec E$ of a nearly convex set $E$
is nearly convex provided that
$\spn(E-E)=\rec(\closu E)-\rec(\closu E).$

\begin{fact}\emph{\cite[Theorem~8.1]{Rock70}}
Let $C$ be a nonempty convex subset of $\RR^n$.
Then $\rec C$ is a convex cone and $0\in \rec C$.
Moreover,
\begin{equation}\label{eq:rec:conv}
\rec C=\stb{y \in \RR^n \suchthat y+C\subseteq C}.
\end{equation}
\end{fact}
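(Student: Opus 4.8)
The plan is to argue directly from the definition \eqref{eq:rec:d}, i.e.\ $\rec C=\{y\in\RR^n \suchthat (\forall\lambda\geq 0)\ \lambda y+C\subseteq C\}$, and to verify the three assertions in turn, postponing the identity \eqref{eq:rec:conv} to the end since it is the only part that genuinely uses convexity of $C$. First, $0\in\rec C$ is immediate because $\lambda\cdot 0+C=C\subseteq C$ for every $\lambda\geq 0$. Next, $\rec C$ is a cone: if $y\in\rec C$ and $\mu\geq 0$, then for every $\lambda\geq 0$ we have $\lambda(\mu y)+C=(\lambda\mu)y+C\subseteq C$ because $\lambda\mu\geq 0$, so $\mu y\in\rec C$.

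To obtain convexity it then suffices to show that $\rec C$ is closed under addition, since a cone closed under addition is convex (given $y_1,y_2$ in the set and $\alpha\in[0,1]$, write $\alpha y_1+(1-\alpha)y_2$ as a sum of the two cone elements $\alpha y_1$ and $(1-\alpha)y_2$). So let $y_1,y_2\in\rec C$ and $\lambda\geq 0$; then $\lambda(y_1+y_2)+C=\lambda y_1+(\lambda y_2+C)\subseteq\lambda y_1+C\subseteq C$, where the two inclusions use $y_2\in\rec C$ and $y_1\in\rec C$ respectively. Hence $y_1+y_2\in\rec C$, and $\rec C$ is a convex cone. (Note that this step does not yet use convexity of $C$; only the formula \eqref{eq:rec:conv} will.)

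Finally, for \eqref{eq:rec:conv}: the inclusion $\rec C\subseteq\{y\suchthat y+C\subseteq C\}$ is immediate by taking $\lambda=1$ in the definition. For the reverse inclusion, suppose $y+C\subseteq C$; I must show $\lambda y+C\subseteq C$ for every $\lambda\geq 0$. First, by induction, $ky+C\subseteq C$ for every nonnegative integer $k$, since $(k+1)y+C=y+(ky+C)\subseteq y+C\subseteq C$. Second, for $\mu\in[0,1]$ and any $c\in C$, write $\mu y+c=\mu(y+c)+(1-\mu)c$; since $y+c\in C$ and $c\in C$, convexity of $C$ gives $\mu y+c\in C$, i.e.\ $\mu y+C\subseteq C$. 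Now for arbitrary $\lambda\geq 0$ write $\lambda=k+\mu$ with $k=\lfloor\lambda\rfloor$ and $\mu=\lambda-k\in[0,1)$; then for any $c\in C$ we have $\mu y+c\in C$ by the second step, and hence $\lambda y+c=ky+(\mu y+c)\in C$ by the first step. This shows $y\in\rec C$, and \eqref{eq:rec:conv} follows.

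I expect the only delicate point to be this last inclusion: closure under the single translation $y+C\subseteq C$ does not a priori yield closure under fractional translations, and it is precisely here that convexity of $C$ is indispensable, the decomposition $\mu y+c=\mu(y+c)+(1-\mu)c$ being the crucial trick. Everything else is a direct, essentially formal, manipulation of the defining condition.
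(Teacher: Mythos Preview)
Your argument is correct in every part: the verifications that $0\in\rec C$, that $\rec C$ is a cone, and that it is closed under addition (hence convex) are clean and valid, and your proof of \eqref{eq:rec:conv} via the integer-plus-fractional decomposition $\lambda=k+\mu$ together with the convex combination $\mu y+c=\mu(y+c)+(1-\mu)c$ is the standard and correct way to handle the nontrivial inclusion.

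There is, however, nothing to compare against: the paper does not prove this statement at all. It is recorded as a \emph{Fact} with a bare citation to \cite[Theorem~8.1]{Rock70}, and no argument is given. So your proposal supplies a proof where the paper simply imports the result from Rockafellar.
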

When $E$ is nonempty nearly convex subset of $\RR^n$, the characterization
\eqref{eq:rec:conv} may fail to be equivalent to \eqref{eq:rec:d}
as illustrated in the following example. Let $\NN$ denote the set of natural numbers.

\begin{example}\label{ex:rec:nc:i}
Suppose that $E:=\stb{(x,y)~|~y\ge x^2,x\ge 0}\setminus \big(\stb{0}\times \NN\big)\subseteq \RR^2$.
Notice that the set $\tilde{E}=\stb{(x,y)~|~y\ge x^2,x\ge 0}$ is convex and
$E\approx \tilde{E}$ and therefore $E$
is nearly convex by Fact~\ref{l:ruger}\ref{l:rugerii+}.
Clearly $(\forall y\in \stb{0}\times \NN)$
$y+E\subseteq E$, however $y\not\in\rec{E}=\stb{0}$,
since $(\forall \lam \in R_+\setminus \NN)$ $\lam y+E\not\subseteq E$.
\end{example}

\begin{lem}\label{NC:recriC:recclC}
Let $E$ be a nonempty nearly convex subset in $\RR^n$.
Then $\rec (\ri E)=\rec (\closu{E})$, in particular, a closed convex cone.
\end{lem}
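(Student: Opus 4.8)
The plan is to reduce the statement to the corresponding fact for convex sets, namely Fact~\ref{f:recC:II}, which says that for a nonempty convex set $C$ one has $\rec(\ri C) = \rec(\closu{C})$. Since $E$ is nearly convex, Fact~\ref{l:nearconset} tells us that $\ri E$ is convex and nonempty, and moreover $\closu{(\ri E)} = \closu{E}$ and $\ri(\ri E) = \ri E$. So the natural move is to apply Fact~\ref{f:recC:II} with the convex set $C := \ri E$.

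**First I would** invoke Fact~\ref{l:nearconset} to record that $C := \ri E$ is a nonempty convex subset of $\RR^n$ (nonemptiness uses that $E \neq \varnothing$), and that $\closu{C} = \closu{(\ri E)} = \closu{E}$. Then I would apply Fact~\ref{f:recC:II} to this $C$, obtaining
\[
\rec(\ri C) = \rec(\closu{C}).
\]
Since $\ri C = \ri(\ri E) = \ri E$ (again by Fact~\ref{l:nearconset}) and $\closu{C} = \closu{E}$, the left-hand side is $\rec(\ri E)$ and the right-hand side is $\rec(\closu{E})$, which is exactly the claimed equality.

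**For the "in particular" clause**, I would note that $\closu{E}$ is a nonempty closed convex set (closed by definition of closure, convex and nonempty by Fact~\ref{l:nearconset}\ref{o:c} and the hypothesis $E \neq \varnothing$). By Fact~\ref{f:recC}, the recession cone of a nonempty closed convex set is a closed convex cone; applied to $\closu{E}$ this gives that $\rec(\closu{E})$ is a closed convex cone, and hence so is $\rec(\ri E)$ by the equality just established.

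**I do not expect a genuine obstacle here:** the proof is essentially a bookkeeping exercise in feeding the right convex set into Rockafellar's theorem. The only mild subtlety is being careful that all the identities from Fact~\ref{l:nearconset} one needs — $\closu{(\ri E)} = \closu{E}$, $\ri(\ri E) = \ri E$, convexity of $\ri E$ — are indeed consequences of near convexity, and that nonemptiness of $\ri E$ follows from $E \neq \varnothing$ via Fact~\ref{l:nearconset}\ref{t:c}. Once those are in place, the argument is a two-line substitution into Fact~\ref{f:recC:II} followed by a one-line appeal to Fact~\ref{f:recC} for the closedness remark.
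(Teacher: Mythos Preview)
Your proposal is correct and essentially identical to the paper's proof: both set $C:=\ri E$, invoke Fact~\ref{l:nearconset} for the identities $\ri(\ri E)=\ri E$ and $\closu{(\ri E)}=\closu{E}$, apply Fact~\ref{f:recC:II} to $C$, and then cite Fact~\ref{f:recC} for the closed convex cone claim.
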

\begin{proof}
Since $E$ is nearly convex, it follows that $\ri E$ and $\closu{E} $ are convex sets. Fact~\ref{l:nearconset} gives
$\ri(\ri E)=\ri E$ and $\closu{(\ri E)}=\closu{E}$. Applying Fact~\ref{f:recC:II} to the convex set $\ri E$
 we conclude that
 \begin{equation}
 \rec (\ri E)=\rec[\ri (\ri E)]=\rec[\closu{(\ri E)}]=\rec(\closu{ E}).
 \end{equation}
As $\closu{E}$ is closed convex, Fact~\ref{f:recC} completes the proof.
\end{proof}

\begin{proposition}\label{p:product:rec}
 Given $(\forall i\in I)$ $E_{i}\subseteq\RR^n$. Then the following hold:
\begin{enumerate}
\item\label{i:rec:prod1}
 $\rec (E_{1}\times \cdots\times E_{m})=\rec E_{1}\times\cdots\times \rec E_{m}$.
\item\label{i:rec:prod2}
 If, in addition, each $E_{i}$ is nearly convex, then
$$\rec [\closu(E_{1}\times \cdots\times E_{m})]=\rec (\closu{E_{1}})\times\cdots\times \rec (\closu{E_{m}})
=\rec (\ri {E_{1}})\times\cdots\times \rec (\ri{E_{m}}).$$
\end{enumerate}
\end{proposition}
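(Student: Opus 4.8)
The plan is to prove part~\ref{i:rec:prod1} directly from the definition \eqref{eq:rec:d} of the recession cone, and then obtain part~\ref{i:rec:prod2} by applying part~\ref{i:rec:prod1} to the closures together with the product formulas for closures and relative interiors of nearly convex sets already established in Proposition~\ref{p:productset}\ref{i:product} and Lemma~\ref{NC:recriC:recclC}.

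For part~\ref{i:rec:prod1}, let $E:=E_1\times\cdots\times E_m$ and observe that for $y=(y_1,\ldots,y_m)$ and $\lambda\ge 0$ the inclusion $\lambda y+E\subseteq E$ holds if and only if $\lambda y_i+E_i\subseteq E_i$ for every $i\in I$; this is because the translation acts coordinatewise and a product of sets is contained in a product of sets exactly when each factor is. Quantifying over $\lambda\ge 0$, the definition \eqref{eq:rec:d} then gives $y\in\rec E$ if and only if $y_i\in\rec E_i$ for all $i$, i.e. $\rec E=\rec E_1\times\cdots\times\rec E_m$. (No convexity or near convexity is needed here.)

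For part~\ref{i:rec:prod2}, assume each $E_i$ is nearly convex. By Proposition~\ref{p:productset}\ref{i:product} we have $\closu{(E_1\times\cdots\times E_m)}=\closu{E_1}\times\cdots\times\closu{E_m}$. Applying part~\ref{i:rec:prod1} to the closed sets $\closu{E_i}$ yields
$$\rec[\closu{(E_1\times\cdots\times E_m)}]=\rec(\closu{E_1}\times\cdots\times\closu{E_m})=\rec(\closu{E_1})\times\cdots\times\rec(\closu{E_m}).$$
Finally, since each $E_i$ is nonempty nearly convex (the nonemptiness being implicit; if some $E_i=\varnothing$ all sides are empty), Lemma~\ref{NC:recriC:recclC} gives $\rec(\closu{E_i})=\rec(\ri E_i)$ for each $i$, and substituting coordinatewise completes the chain of equalities.

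I do not expect a genuine obstacle here: part~\ref{i:rec:prod1} is an elementary unwinding of definitions, and part~\ref{i:rec:prod2} is purely a matter of invoking the already-proved product formula for closures and the recession-cone identity of Lemma~\ref{NC:recriC:recclC}. The only point requiring a word of care is the degenerate case where one of the $E_i$ is empty, which should be dispatched by noting that then the product is empty and every recession cone in sight is empty, so the identities hold trivially; alternatively one simply adds the standing hypothesis that all $E_i$ are nonempty, consistent with the rest of the section.
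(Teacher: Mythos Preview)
Your proposal is correct and follows essentially the same route as the paper: part~\ref{i:rec:prod1} is immediate from the coordinatewise reading of \eqref{eq:rec:d}, and part~\ref{i:rec:prod2} combines the product formula for closures with part~\ref{i:rec:prod1} and Lemma~\ref{NC:recriC:recclC}. One minor correction: in the degenerate case you mention, $\rec\varnothing$ is the whole space rather than empty, so the identity in \ref{i:rec:prod1} can actually fail when some $E_i=\varnothing$ and another is nonempty with proper recession cone; your second suggestion---adopting the standing nonemptiness hypothesis used throughout the section---is the right fix, and is what the paper tacitly does.
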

\begin{proof}
\ref{i:rec:prod1}: This follows from the definition recession cone \eqref{eq:rec:d}.

\ref{i:rec:prod2}: By \ref{i:rec:prod1} and Lemma~\ref{NC:recriC:recclC}, we have
\begin{align*}
\rec [\closu(E_{1}\times \cdots\times E_{m})]& =\rec (\closu E_{1}\times \cdots\times \closu E_{m})\\
& =\rec (\closu{E_{1}})\times\cdots\times \rec (\closu{E_{m}})\\
& =\rec (\ri {E_{1}})\times\cdots\times \rec (\ri{E_{m}}).
\end{align*}

\end{proof}

The following result is folklore, and we omit its proof.
\begin{fact}\label{f:folklore}
Let $S\subseteq\RR^n$ be nonempty, and $K\subseteq \RR^n$ be a convex cone.
Then the following hold:
\begin{enumerate}
\item\label{C:cl:bd}
The set $S$  is bounded
if and only if $\closu{S}$ is bounded.
\item \label{A:D} For every $x\in S$,
 \begin{equation}
 {\aff}(S)=\stb{x}+{\spn(S-S)}.
 \end{equation}
 In addition, if $0\in S$, then
 $
 \spn S= \aff S=\spn(S-S).
 $
 \item \label{i:span:same} $\spn(S-S)=\spn(\closu S-\closu S)$.
 \item \label{i:convex:cone}
 $K-K=\spn K$.
 \end{enumerate}
 \end{fact}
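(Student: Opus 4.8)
Fact~\ref{f:folklore} consists of four elementary facts about subsets and convex cones in $\RR^n$. The plan is to handle each part separately, since they are essentially independent.

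For part \ref{C:cl:bd}, I would note that $S\subseteq\closu{S}$ gives one direction immediately, and for the converse use that boundedness of $S$ means $S\subseteq B(0,r)$ for some $r>0$; since $B(0,r)$ is closed, $\closu{S}\subseteq B(0,r)$, hence $\closu{S}$ is bounded. For part \ref{A:D}, fix $x\in S$. The affine hull $\aff(S)$ is the smallest affine set containing $S$; translating by $-x$, the set $S-x$ contains $0$, and $\aff(S-x)=\aff(S)-x$ is then the smallest linear subspace containing $S-x$, which is $\spn(S-x)$. Thus $\aff(S)=x+\spn(S-x)$. To identify $\spn(S-x)$ with $\spn(S-S)$: clearly $S-x\subseteq S-S$ so $\spn(S-x)\subseteq\spn(S-S)$; conversely, for $s,s'\in S$ we have $s-s'=(s-x)-(s'-x)\in\spn(S-x)$, so $\spn(S-S)\subseteq\spn(S-x)$. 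Hence $\aff(S)=x+\spn(S-S)$. When $0\in S$ we may take $x=0$, giving $\aff(S)=\spn(S)=\spn(S-S)$ (the middle equality also since $\spn$ of any set equals the affine hull once $0$ is included).

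For part \ref{i:span:same}, one inclusion is trivial since $S\subseteq\closu{S}$ forces $\spn(S-S)\subseteq\spn(\closu{S}-\closu{S})$. For the reverse, let $u,v\in\closu{S}$, say $u=\lim u_k$, $v=\lim v_k$ with $u_k,v_k\in S$. Then $u-v=\lim(u_k-v_k)$ with each $u_k-v_k\in S-S\subseteq\spn(S-S)$; since $\spn(S-S)$ is a finite-dimensional subspace, it is closed, so $u-v\in\spn(S-S)$. Therefore $\closu{S}-\closu{S}\subseteq\spn(S-S)$ and taking spans gives equality. For part \ref{i:convex:cone}, the inclusion $K-K\subseteq\spn K$ is immediate. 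Conversely, $\spn K$ consists of finite linear combinations $\sum\alpha_i k_i$ with $k_i\in K$; collecting positive and negative coefficients and using that $K$ is a convex cone (so closed under addition and nonnegative scaling, hence $\sum_{\alpha_i>0}\alpha_i k_i\in K$ and $\sum_{\alpha_i<0}(-\alpha_i)k_i\in K$), every element of $\spn K$ is a difference of two elements of $K$, i.e., lies in $K-K$. Thus $K-K=\spn K$.

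None of the four parts presents a genuine obstacle — as the paper says, this is folklore. The only point requiring a moment's care is the closedness of a finite-dimensional subspace used in part \ref{i:span:same}, and the convex-cone arithmetic (closure under addition and nonnegative scalar multiplication) used in part \ref{i:convex:cone}; both are standard. Accordingly the authors omit the proof, and I would do the same or, at most, record the one-line arguments above.
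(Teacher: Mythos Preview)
Your proposal is correct in all four parts, and the paper itself omits the proof entirely, calling the result folklore --- precisely as you anticipated in your closing remark. The brief arguments you record are sound; the only microscopic refinement would be in \ref{i:convex:cone}, where if all coefficients have the same sign (so one of the two collected sums is empty) you can still write $w=2w-w\in K-K$ (or $w=w-2w$), but this is within the spirit of what you wrote.
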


%
\begin{fact}\emph{\cite[Theorem~8.4]{Rock70}}\label{f:cl:i}
Let $C$ be a nonempty closed convex subset in $\RR^n$.
Then $C$ is bounded if and only if $\rec C=\stb {0}$.
\end{fact}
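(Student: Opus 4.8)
The plan is to prove the two implications separately: the forward one is immediate from the definition \eqref{eq:rec:d}, while the reverse one I would argue by contraposition using a compactness argument on the unit sphere of $\RR^n$. For the forward direction, suppose $C$ is bounded and, seeking a contradiction, that some $y\in\rec C$ satisfies $y\neq 0$. Fixing any $x\in C$ (possible since $C\neq\varnothing$), \eqref{eq:rec:d} gives $x+\lambda y\in C$ for every $\lambda\geq 0$, hence $\|x+\lambda y\|\geq \lambda\|y\|-\|x\|\to+\infty$, contradicting boundedness of $C$; therefore $\rec C=\stb{0}$ (note $0\in\rec C$ always).

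For the reverse direction, assume $C$ is unbounded; I would then produce a nonzero element of $\rec C$. Fix $x_0\in C$ and pick a sequence $(x_k)_{k\in\NN}$ in $C$ with $\|x_k-x_0\|\to+\infty$. The unit vectors $y_k:=(x_k-x_0)/\|x_k-x_0\|$ lie on the compact unit sphere, so after passing to a subsequence $y_k\to y$ with $\|y\|=1$. The first step is to verify $x_0+\lambda y\in C$ for every $\lambda\geq 0$: once $\|x_k-x_0\|\geq\lambda$, the point $x_0+\tfrac{\lambda}{\|x_k-x_0\|}(x_k-x_0)$ is a convex combination of $x_0$ and $x_k$ and so lies in $C$; letting $k\to\infty$ and using that $C$ is closed yields $x_0+\lambda y\in C$.

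The hard part is the remaining step: upgrading this to $z+\lambda y\in C$ for \emph{every} $z\in C$ and every $\lambda\geq 0$, i.e.\ showing that a recession direction witnessed at the single base point $x_0$ is a recession direction at every point of $C$. Given $z\in C$ and $\mu>0$, for each $t\in\,\left]0,1\right]$ convexity of $C$ gives $(1-t)z+t\bigl(x_0+(\mu/t)y\bigr)=(1-t)z+tx_0+\mu y\in C$, and since $(1-t)z+tx_0\to z$ as $t\downarrow 0$ while $C$ is closed, we obtain $z+\mu y\in C$ (the case $\mu=0$ being trivial). Thus $y\in\rec C$ by \eqref{eq:rec:d} with $y\neq 0$, so $\rec C\neq\stb{0}$, completing the contrapositive. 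Everything apart from this "transfer to all base points" argument is either the definition \eqref{eq:rec:d} or routine compactness and passage to the limit.
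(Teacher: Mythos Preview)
Your argument is correct in both directions: the forward implication is the obvious one, and for the reverse you carry out the standard compactness-on-the-sphere argument and then correctly ``transfer'' the direction $y$ from the base point $x_0$ to every $z\in C$ using convexity together with closedness. There is no gap.

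As for comparison with the paper: this statement is recorded as a \emph{Fact} and is simply cited from \cite[Theorem~8.4]{Rock70}; the paper gives no proof of its own. Your write-up is essentially the classical proof one finds in Rockafellar's book, so there is nothing meaningfully different to contrast.
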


\begin{prop}
Let $E$ be a nonempty nearly convex subset in $\RR^n$.
Then $E$ is bounded if and only if
$\rec (\closu{E})=\stb{0}$.
\end{prop}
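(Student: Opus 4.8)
The plan is to reduce the boundedness question to the closure of $E$, where Rockafellar's classical criterion applies verbatim. First I would observe that since $E$ is nonempty and nearly convex, Fact~\ref{l:nearconset}\ref{o:c} guarantees that $\closu{E}$ is convex; it is of course also closed and nonempty. Thus $\closu{E}$ is a nonempty closed convex subset of $\RR^n$, so Fact~\ref{f:cl:i} applies and yields that $\closu{E}$ is bounded if and only if $\rec(\closu{E}) = \{0\}$.

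Next I would connect boundedness of $E$ to that of $\closu{E}$. This is precisely the content of Fact~\ref{f:folklore}\ref{C:cl:bd}, which holds for an arbitrary nonempty set: $E$ is bounded if and only if $\closu{E}$ is bounded. Chaining the two equivalences gives
\begin{equation*}
E \text{ bounded} \iff \closu{E} \text{ bounded} \iff \rec(\closu{E}) = \{0\},
\end{equation*}
which is the claim.

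There is essentially no obstacle here: the statement is a direct composition of Fact~\ref{l:nearconset}, Fact~\ref{f:folklore}\ref{C:cl:bd}, and Fact~\ref{f:cl:i}. The only point requiring a moment's care is verifying the hypotheses of Fact~\ref{f:cl:i}, namely that $\closu{E}$ is nonempty \emph{and} closed \emph{and} convex; nonemptiness and closedness are immediate, and convexity is exactly where near convexity of $E$ is used. One could remark that, by contrast with Fact~\ref{f:cl:i}, one cannot replace $\rec(\closu{E})$ by the set on the right-hand side of \eqref{eq:rec:conv} computed for $E$ itself, as Example~\ref{ex:rec:nc:i} shows; but this is a side comment, not needed for the proof.
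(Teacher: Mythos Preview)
Your proposal is correct and follows essentially the same route as the paper: establish that $\closu{E}$ is nonempty, closed, and convex (via Fact~\ref{l:nearconset}), then chain Fact~\ref{f:folklore}\ref{C:cl:bd} with Fact~\ref{f:cl:i}. The paper's proof is verbatim this argument, so there is nothing to add.
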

\begin{proof}
Since $E$ is nearly convex,
$\closu{E}$ is a nonempty closed convex.
Using Fact\ref{f:folklore}\ref{C:cl:bd} and Fact~\ref{f:cl:i} applied to $\closu{E}$ we conclude that
\begin{equation}
E \text{\;\; is bounded\;\;}\iff \closu{E} \text{\;\; is bounded\;\;}\iff \rec (\closu{E})
=\stb{0}.
\end{equation}
\end{proof}

The following example shows that $\rec(\closu{E})=\{0\}$ cannot be replaced by $\rec E=\{0\}$.
\begin{example} The almost convex set
$$E:=\menge{(x_{1},x_{2})}{-1\leq x_{1}\leq 1, x_{2}\in\RR}\setminus
\menge{(x_{1},x_{2})}{x_{1}=\pm 1, -1<x_{2}<1}$$
has $\rec E=\{(0,0)\}$, but $E$ is unbounded.
\end{example}


\begin{lem}\label{lem:rec:reccl}
Let  $S$ be a nonempty subset of $\RR^n$. Then $\rec S \subseteq\rec(\closu{S})$.
\end{lem}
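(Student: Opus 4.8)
The plan is to unwind the definition of the recession cone and reduce the claim to a closure‐preservation statement. Fix $y \in \rec S$; by \eqref{eq:rec:d} this means $\lambda y + S \subseteq S$ for every $\lambda \geq 0$. I want to show $\lambda y + \closu{S} \subseteq \closu{S}$ for every $\lambda \geq 0$, which by \eqref{eq:rec:d} is exactly $y \in \rec(\closu{S})$.

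The key step is the elementary observation that translation by a fixed vector is a homeomorphism of $\RR^n$, so for any vector $v$ and any set $T$ we have $v + \closu{T} = \closu{v+T}$. Applying this with $v = \lambda y$ and $T = S$, together with the inclusion $\lambda y + S \subseteq S$ and monotonicity of the closure operator, gives
\begin{equation*}
\lambda y + \closu{S} = \closu{\lambda y + S} \subseteq \closu{S}.
\end{equation*}
Since this holds for every $\lambda \geq 0$, we conclude $y \in \rec(\closu{S})$, and hence $\rec S \subseteq \rec(\closu{S})$.

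There is essentially no obstacle here; the lemma is a soft topological fact, and the only thing to be careful about is invoking the correct form of the recession-cone definition \eqref{eq:rec:d} (the one using the scaling condition for all $\lambda \geq 0$, not the simplified single-step version \eqref{eq:rec:conv}, which need not coincide with it for non-convex $S$ as Example~\ref{ex:rec:nc:i} shows). No convexity or near convexity of $S$ is needed for this inclusion, which is why it is stated for an arbitrary nonempty subset.
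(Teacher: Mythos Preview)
Your proof is correct and takes essentially the same approach as the paper's: the paper argues with sequences (for $s\in\closu{S}$ pick $s_n\to s$ in $S$, then $\lambda y+s_n\in S$ so $\lambda y+s\in\closu{S}$), which is exactly the pointwise unpacking of your identity $\lambda y+\closu{S}=\closu{\lambda y+S}$. Your remark about needing the definition \eqref{eq:rec:d} rather than \eqref{eq:rec:conv} is also apt.
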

\begin{proof}
Let $y\in \rec S$ and let $s\in \closu{S}$.
Then $(\exists (s_n)_{n\in \NN})\subseteq S$ such that
$s_n\to s$. Since $(\forall n\in \NN)$ $s_n\in S$,
it follows from that definition of $\rec S$ that
 $(\forall n\in \NN)$ $(\forall \lambda\geq 0)$ $\lambda y+s_n\in S$, hence
 $\lambda y+s_n\to \lambda y+s\in \closu{S}$.
 That is $y\in \rec (\closu{S})$, which completes
 the proof.
\end{proof}

We are now ready for the main result of this section.
\begin{prop}\label{P:rec:NC}
Let $E$ be a nonempty nearly convex subset of $\RR^n$.
Suppose that
\begin{equation}\label{e:span:rece}
\spn[\rec (\closu{E})]= \spn (\closu{E}-\closu{E}),
\end{equation}
equivalently,
\begin{equation}\label{e:span:rece1}
\rec(\closu E)-\rec(\closu E)=\spn (E-E).
\end{equation}
Then the following hold:
\begin{enumerate}
\item\label{P:rec:NC:1}
$\ri [\rec(\closu{E})]\subseteq \rec E$.
\item\label{P:rec:NC:2}
$\rec E$ is nearly convex.
\item\label{P:rec:NC:3}
$\rec E\approx \ri [\rec (\closu{E})]$.
\item\label{P:rec:NC:4}
$\rec (\ri E) \approx \rec E\approx\rec ( \closu{E})$.
\end{enumerate}
\end{prop}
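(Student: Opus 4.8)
The plan is to prove the four assertions more-or-less in the stated order, reducing everything to Rockafellar's classical recession calculus (Fact~\ref{f:recC:CL:I}, Fact~\ref{f:recC:II}, Fact~\ref{f:recC}) applied to the closed convex set $\closu{E}$, together with the near-equality machinery (Fact~\ref{l:nearconset}, Fact~\ref{ri:part}, Fact~\ref{sq:thm}) and Lemma~\ref{NC:recriC:recclC}. First I would record the equivalence of the two hypotheses \eqref{e:span:rece} and \eqref{e:span:rece1}: this is immediate from Fact~\ref{f:folklore}, since $\rec(\closu E)$ is a convex cone so $\rec(\closu E)-\rec(\closu E)=\spn[\rec(\closu E)]$ by \ref{i:convex:cone}, and $\spn(E-E)=\spn(\closu E-\closu E)$ by \ref{i:span:same}. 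So from now on both forms are available.

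For \ref{P:rec:NC:1}, the core step is to show $\ri[\rec(\closu E)]\subseteq\rec E$, i.e. that every $y$ in the relative interior of the recession cone of $\closu E$ satisfies $\lambda y+E\subseteq E$ for all $\lambda\ge 0$. Here is where the hypothesis does the work. Fix such a $y$ and fix $x\in E$ and $\lambda>0$; I must produce $\lambda y+x\in E$. Since $E$ is nearly convex, $\ri E=\ri(\closu E)$ and it suffices (by near equality, after which one chases the point back into $E$ using that $E$ contains $\ri E$) to know $\lambda y+x\in\ri E$ whenever $x\in\ri E$, and to handle $x\in E\setminus\ri E$ separately. The clean way: pick any $x_0\in\ri E=\ri(\closu E)$. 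Because $y\in\ri[\rec(\closu E)]=\ri[\rec(\ri(\closu E))]$ and the span condition forces $\rec(\closu E)$ to be "full-dimensional relative to $\closu E$", the half-line $x_0+\RP y$ actually lies in $\ri(\closu E)=\ri E\subseteq E$; more generally for arbitrary $x\in E\subseteq\closu E$ and $x_0\in\ri E$, the point $x_0+\lambda y\in\ri E$, and then by Fact~\ref{l:access} applied to the convex set $\closu E$, the segment $[x_0+\lambda y,\;x+\lambda y[$ — wait, one wants $x+\lambda y$; use instead that $x+\lambda y$ is a limit of points $(1-t)(x_0+\lambda y)+t(x+\lambda y)$ and that $(1-t)(x_0+\lambda y)+t(x+\lambda y)=(1-t)x_0+tx+\lambda y$; choosing $t<1$ keeps $(1-t)x_0+tx\in\ri E$ (Fact~\ref{l:access} again, since $x_0\in\ri E$, $x\in\closu E$), hence $(1-t)x_0+tx+\lambda y\in\ri E\subseteq E$. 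Letting $t\uparrow 1$ only shows $x+\lambda y\in\closu E$, not $\in E$. So the argument has to be run directly on $E$: the right statement is that $y\in\rec(\ri(\closu E))$ already gives $\lambda y+\ri E\subseteq\ri E$, and for $x\in E\setminus\ri E$ one writes $x+\lambda y=\lim_t[(1-t)x_0+tx]+\lambda y$ with $(1-t)x_0+tx\in\ri E$, so $(1-t)x_0+tx+\lambda y\in\ri E$, and finally $x+\lambda y\in\closu E$; to land in $E$ I instead observe that $x+\lambda y$ lies on the segment from $x\in E$ toward a point of $\ri E$, hence by Theorem~\ref{nc:charac}\ref{t:nc:charac:3} (the $[x',y'[\subseteq\ri E$ characterization of near convexity) we get $x+\lambda y\in\ri E$ once $\lambda>0$ — and $\lambda=0$ is trivial. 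This last move, getting the recession inclusion to hold on $E$ and not merely on $\closu E$, is the main obstacle, and it is exactly what \eqref{e:span:rece} is designed to unlock; without the span condition Example~\ref{ex:rec:nc:i} shows it fails.

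Given \ref{P:rec:NC:1}, the remaining parts are short. For \ref{P:rec:NC:3}: by Lemma~\ref{lem:rec:reccl}, $\rec E\subseteq\rec(\closu E)$; combined with \ref{P:rec:NC:1} this gives the sandwich
\begin{equation*}
\ri[\rec(\closu E)]\subseteq\rec E\subseteq\rec(\closu E),
\end{equation*}
and since $\rec(\closu E)$ is closed convex (Fact~\ref{f:recC}) with $\closu{\ri[\rec(\closu E)]}=\rec(\closu E)$ and $\ri[\ri[\rec(\closu E)]]=\ri[\rec(\closu E)]$ (Fact~\ref{f:ri}\ref{f:ri:i},\ref{f:ri:iii}), Fact~\ref{sq:thm} yields $\rec E\approx\ri[\rec(\closu E)]$. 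For \ref{P:rec:NC:2}: the sandwich above exhibits the convex set $\ri[\rec(\closu E)]$ squeezed between itself and its closure around $\rec E$, so $\rec E$ is nearly convex by definition (or by Fact~\ref{l:ruger}\ref{l:rugerii+} via \ref{P:rec:NC:3}). For \ref{P:rec:NC:4}: Lemma~\ref{NC:recriC:recclC} already gives $\rec(\ri E)=\rec(\closu E)$, and since $\closu E$ is convex, $\rec(\closu E)=\closu{\ri[\rec(\closu E)]}\approx\ri[\rec(\closu E)]\approx\rec E$ by \ref{P:rec:NC:3}; chaining these near-equalities (near equality is transitive, being equality of closures and of relative interiors) gives $\rec(\ri E)\approx\rec E\approx\rec(\closu E)$, completing the proof.
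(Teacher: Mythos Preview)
Your treatment of the equivalence of \eqref{e:span:rece} and \eqref{e:span:rece1}, and of parts \ref{P:rec:NC:2}--\ref{P:rec:NC:4}, is correct and essentially identical to the paper's: the sandwich $\ri[\rec(\closu E)]\subseteq\rec E\subseteq\rec(\closu E)$ (from \ref{P:rec:NC:1} and Lemma~\ref{lem:rec:reccl}) plus convexity and closedness of $\rec(\closu E)$ yields near convexity and the near equalities, and Lemma~\ref{NC:recriC:recclC} handles $\rec(\ri E)$.

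The problem is in \ref{P:rec:NC:1}. You correctly isolate the obstacle --- for $x\in E\setminus\ri E$ the segment/limit trick only gives $x+\lambda y\in\closu E$ --- but your proposed fix does not close the gap. The assertion ``$x+\lambda y$ lies on the segment from $x\in E$ toward a point of $\ri E$'' is never justified: you produce no such point. If you try $x+\mu y$ with $\mu>\lambda$, you need $x+\mu y\in\ri E$, which is exactly what you are trying to prove; if you try $x_0+\mu y$ for some $x_0\in\ri E$, the point $x+\lambda y$ does not in general lie on $[x,\,x_0+\mu y]$. More telling is that your argument never actually \emph{uses} the span hypothesis in a computational way (you invoke it only to say that the recession cone is ``full-dimensional relative to $\closu E$'', and then do not exploit that statement). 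Example~\ref{ex:rec:nc:i} shows the conclusion fails without the hypothesis, so any valid argument must engage it concretely.

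The paper's argument makes that engagement explicit. Take $y\in\ri[\rec(\closu E)]$, so there is $\eps>0$ with $B(y,\eps)\cap\aff[\rec(\closu E)]\subseteq\rec(\closu E)$. Now, and this is precisely where \eqref{e:span:rece} enters, $\aff[\rec(\closu E)]=\spn[\rec(\closu E)]=\spn(\closu E-\closu E)$, so for any $x\in\closu E$ one has
\[
B(x+y,\eps)\cap\aff(\closu E)=x+\bigl(B(y,\eps)\cap\spn(\closu E-\closu E)\bigr)=x+\bigl(B(y,\eps)\cap\aff[\rec(\closu E)]\bigr)\subseteq x+\rec(\closu E)\subseteq\closu E.
\]
Hence $x+y\in\ri(\closu E)=\ri E\subseteq E$. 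Replacing $y$ by $\lambda y$ (still in $\ri[\rec(\closu E)]$ for $\lambda>0$, since the set is a cone) gives $x+\lambda y\in E$ for all $x\in E$ and $\lambda\ge 0$, i.e.\ $y\in\rec E$. This is the missing step; with it in place, your outline for \ref{P:rec:NC:2}--\ref{P:rec:NC:4} goes through unchanged.
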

\begin{proof}
Observe that \eqref{e:span:rece} and \eqref{e:span:rece1} are equivalent because of
Fact~\ref{f:folklore}\ref{i:span:same}\&\ref{i:convex:cone}.

\ref{P:rec:NC:1}:
Let $y\in \ri[\rec(\closu{E})]\subseteq \rec(\closu{E})$.
Then $(\exists \eps >0)$
such that
\begin{equation}\label{e:y:inreli}
B(y,\eps)\cap\aff[\rec (\closu{E})]\subseteq \rec (\closu{E}).
\end{equation}
Therefore $(\forall x \in \closu{E})$,
 $x+y\in \closu{E}$, and
\begin{equation}\label{e:asmp:i}
 x+\big(B(y,\eps)\cap\aff[\rec (\closu{E})]\big)\subseteq  \closu{E}.
\end{equation}
Using Fact~\ref{f:folklore}\ref{A:D}, \eqref{e:span:rece} and \eqref{e:asmp:i}
we have
\begin{align}\label{e:asmp:ii}
B(x+y,\eps)\cap \aff(\overline{E})
&= B(x+y,\eps)\cap(x+\spn ( \overline{E}-\overline{E}))\nonumber\\
&= x+(B(y,\eps)\cap(\spn ( \overline{E}-\overline{E})))\nonumber\\
&= x+\big(B(y,\eps)\cap \aff [\rec (\closu{E})]\big)\nonumber\\
&\subseteq  \closu{E}.
\end{align}
That is, ${x+y}\in \ri(\closu{E}) $.
Since $E$ is nearly convex we have
$\ri (\closu{E})=\ri E$, hence
${x+y}\in \ri {E} \subseteq E$.
In particular, it follows from \eqref{e:asmp:ii} that
$(\forall z \in {E})  \ z+y\in E$.
Note that $\rec(\closu{E})$ is a cone, and $\aff[\rec (\closu{E})]$ is a subspace.
For every $\lambda>0$, $\lambda y\in \ri[\rec(\closu{E})]$,
multiplying \eqref{e:y:inreli} by $\lambda>0$ gives
$$B(\lambda y,\lambda\eps)\cap\aff[\rec (\closu{E})]\subseteq \rec (\closu{E}).$$
The above arguments show that $(\forall z \in {E})  \ z+\lambda y\in E$.
Consequently $y\in \rec E$, as claimed.

\ref{P:rec:NC:2}\&\ref{P:rec:NC:3}:
Since $E$
is nearly convex we have $\closu{E}$
 is a nonempty closed convex set.
Therefore by Fact~\ref{f:recC} and
Fact~\ref{f:ri}\ref{f:ri:i--}\&\ref{f:ri:i}
applied to the convex set $\rec (\closu{E})$
we have
$\rec (\closu{E})$ is a nonempty closed convex cone,
$\ri [\rec (\closu{E})]$ is a nonempty convex set
and $$\rec (\closu{E})=\closu{ [\rec (\closu{E})]}
=\closu{\big(\ri [\rec (\closu{E})]\big)}.$$
It follows from Lemma~\ref{lem:rec:reccl} that
 $\rec E\subseteq \rec(\closu{E})$.
Therefore using \ref{P:rec:NC:1} we have
\begin{equation}\label{eq:inc:im}
\ri [\rec (\closu{E})]\subseteq \rec E\subseteq \rec(\closu{E})
=\closu {[\rec (\closu{E})]}=\closu {\big(\ri [\rec (\closu{E})]\big)}.
\end{equation}
Using Fact~\ref{l:nearconset} we conclude that $\rec E$
is nearly convex and
$\rec E\approx \ri [\rec (\closu{E})]$, as claimed.
\ref{P:rec:NC:4}:
It follows from \eqref{eq:inc:im} that
\begin{equation}\label{eq:cl:part}
\closu{(\rec E)}=\closu{[\rec(\closu{E})]}.
\end{equation}
Since $\rec E$ is nearly convex by \ref{P:rec:NC:2}, and
$\rec(\closu{E})$ is convex by Fact~\ref{f:recC}, it follows from \eqref{eq:cl:part}
and Fact~\ref{ri:part} that $\rec E\approx \rec(\closu{E})$.
Now combine with Lemma~\ref{NC:recriC:recclC}.
\end{proof}


The following example shows that in Theorem~\ref{P:rec:NC}, condition \eqref{e:span:rece} cannot be removed.
\begin{example}
Suppose that $E$ is as defined in Example~\ref{ex:rec:nc:i}.
Then $\stb{0}=\rec E\not\approx \rec \overline{E}=R_+\cdot(0,1)$.
Note that \eqref{e:span:rece} fails because
$\spn(E-E)=\RR^2\neq \{0\}\times \RR=\spn[\rec(\closu{E})].$
\end{example}

Unfortunately, in general we do not know whether the recession cone of a nearly convex set is nearly convex.
We leave this as an open question.
\section{Applications}\label{s:application:max:further}
In this section, we apply results in Section~\ref{s:recession:cone} and Section~\ref{s:calculus:interior}
to study the maximality of a sum of maximally monotone operators and
the closedness of linear images of nearly convex sets.

\subsection{Maximality of a sum of maximally monotone operators}

\begin{fact}{\rm(See, e.g., \cite[Corollary~12.44]{Rock98}.)}
\label{maxm_rock}
Let $A:\RR^n \rra \RR^n$ and $B:\RR^n\rra \RR^n$
be maximally monotone such that
$\ri \dom A\cap\ri \dom B\neq \fady$.
Then $A+B$ is maximally monotone.
\end{fact}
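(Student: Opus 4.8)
The plan is to reduce the claim to a surjectivity statement and then to solve a regularized inclusion and pass to the limit. Since $A+B$ is plainly monotone, by Minty's theorem it suffices to prove $\ran(\Id+A+B)=\RR^n$. Moreover, given any $z\in\RR^n$, replacing each $(x,u)\in\gra A$ by $(x,u-z/2)$ and each $(x,v)\in\gra B$ by $(x,v-z/2)$ leaves $A$ and $B$ maximally monotone, does not change their domains, and reduces the inclusion $z\in x+Ax+Bx$ to the case $z=0$. So it is enough to produce a point $x$ with $0\in x+Ax+Bx$.

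Next I would regularize $B$. For $\mu>0$ let $J_{\mu B}:=(\Id+\mu B)^{-1}$ be the resolvent and $B_\mu:=\mu^{-1}(\Id-J_{\mu B})$ the Yosida approximation; recall that $B_\mu$ is single-valued, monotone, $\mu^{-1}$-Lipschitz, everywhere defined, and $B_\mu x\in B(J_{\mu B}x)$ for all $x$. Since $B_\mu$ is a continuous monotone operator with full domain, $A+B_\mu$ is maximally monotone (the classical easy case of the sum theorem), so $\Id+A+B_\mu$ is surjective by Minty; hence for each $\mu>0$ there are $x_\mu\in\RR^n$ and $a_\mu\in Ax_\mu$ with $a_\mu=-x_\mu-B_\mu x_\mu$.

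The technical heart is a priori boundedness of $(x_\mu)$ and $(B_\mu x_\mu)$ as $\mu\downarrow0$, and this is where the constraint qualification is indispensable. Fix $\bar x\in\ri\dom A\cap\ri\dom B$ with $\bar a\in A\bar x$ and $\bar b\in B\bar x$. Adding the monotonicity inequalities for $A$ at $(x_\mu,a_\mu),(\bar x,\bar a)$ and for $B$ at $(J_{\mu B}x_\mu,B_\mu x_\mu),(\bar x,\bar b)$, and using $\scal{x_\mu-J_{\mu B}x_\mu}{B_\mu x_\mu}=\mu\norm{B_\mu x_\mu}^2\ge 0$, yields for $\mu\le 1$ (after an elementary Cauchy--Schwarz absorption) an estimate of the form $\norm{x_\mu}^2+\tfrac{\mu}{2}\norm{B_\mu x_\mu}^2\le c_1\norm{x_\mu}+c_2$, which already bounds $(x_\mu)$. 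To then bound $(B_\mu x_\mu)$ — equivalently $(a_\mu)$ — one must use that $\bar x$ lies in the \emph{relative} interior of both domains: testing monotonicity of $A$ and of $B$ against points $\bar x+\rho u$ filling an entire relative neighbourhood of $\bar x$ converts the subgradient terms into bounded quantities. Here Fact~\ref{D-R-nc} enters, guaranteeing that $\dom A$ and $\dom B$ are nearly convex so that $\ri\dom A$ and $\ri\dom B$ are genuine (convex, relatively open) sets admitting the relative neighbourhoods needed. I expect this a priori bound to be the main obstacle.

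Finally, along a subsequence $\mu_k\downarrow0$ we get $x_{\mu_k}\to x$, $B_{\mu_k}x_{\mu_k}\to b$, $a_{\mu_k}\to a=-x-b$; since $\norm{x_{\mu_k}-J_{\mu_k B}x_{\mu_k}}=\mu_k\norm{B_{\mu_k}x_{\mu_k}}\to 0$, also $J_{\mu_k B}x_{\mu_k}\to x$, so closedness of $\gra B$ gives $b\in Bx$ and closedness of $\gra A$ gives $a\in Ax$. Hence $0=x+a+b\in x+Ax+Bx$, proving $0\in\ran(\Id+A+B)$ and therefore that $A+B$ is maximally monotone. This is the finite-dimensional instance of Rockafellar's sum theorem, and \cite[Corollary~12.44]{Rock98} carries out the argument in full.
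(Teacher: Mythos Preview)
The paper does not supply its own proof of this statement: it is recorded as a \emph{Fact} with a citation to \cite[Corollary~12.44]{Rock98} and then invoked as a black box in the induction of Theorem~\ref{sum:n:mono}. So there is nothing in the paper to compare your argument against.

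That said, your route---Minty's surjectivity criterion, Yosida regularization of $B$, solving $0\in x_\mu+Ax_\mu+B_\mu x_\mu$, a priori bounds, and passing to the limit via graph closedness---is the standard proof of Rockafellar's sum theorem in $\RR^n$, and your sketch is correct in outline. You rightly identify the bound on $(B_\mu x_\mu)$ as the crux and point to the correct mechanism (local boundedness of a maximally monotone operator on the relative interior of its domain, exploited by testing monotonicity against $\bar x+\rho u$). One minor remark: your appeal to Fact~\ref{D-R-nc} is not strictly needed at that step. Once $\bar x\in\ri\dom A\cap\ri\dom B$ is assumed, the existence of a relative neighbourhood of $\bar x$ inside $\dom A$ (and $\dom B$) follows directly from the definition of relative interior; near convexity of the domains is what makes the \emph{hypothesis} $\ri\dom A\cap\ri\dom B\neq\varnothing$ meaningful and robust (e.g., $\ri\dom A=\ri\conv\dom A$), but it is not a separate ingredient in the a priori estimate itself.
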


One can apply Fact~\ref{maxm_rock} and Corollary~\ref{ncriintersecm} to show the following well-known result.

\begin{theorem}\label{sum:n:mono}
Let $\stb{A_i}_{i\in I}$ be finite family of maximally monotone operators from $\RR^n \To \RR^n$ such that $\cap_{i=1}^{m}\ri (\dom A_i)\neq \fady$. Then $A_1+\cdots+A_m$ is maximally monotone.
\end{theorem}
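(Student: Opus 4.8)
The plan is to reduce the $m$-operator statement to the two-operator case (Fact~\ref{maxm_rock}) by induction on $m$, using Corollary~\ref{ncriintersecm} to keep the relative-interior condition alive at each step. The base case $m=1$ is trivial and $m=2$ is exactly Fact~\ref{maxm_rock}. For the inductive step, suppose the result holds for $m-1$ operators and let $A_1,\ldots,A_m$ be given with $\bigcap_{i=1}^m \ri(\dom A_i)\neq\fady$.

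First I would set $B:=A_1+\cdots+A_{m-1}$. To apply the inductive hypothesis to $A_1,\ldots,A_{m-1}$ I need $\bigcap_{i=1}^{m-1}\ri(\dom A_i)\neq\fady$, which is immediate since it contains $\bigcap_{i=1}^m\ri(\dom A_i)\neq\fady$; hence $B$ is maximally monotone. Next I would show that $B$ and $A_m$ satisfy the hypothesis of Fact~\ref{maxm_rock}, i.e. $\ri(\dom B)\cap\ri(\dom A_m)\neq\fady$. The key point is the identity $\dom B=\dom(A_1+\cdots+A_{m-1})=\bigcap_{i=1}^{m-1}\dom A_i$, together with the fact that each $\dom A_i$ is nearly convex (Fact~\ref{D-R-nc}). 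Since $\bigcap_{i=1}^{m-1}\ri(\dom A_i)\neq\fady$, Corollary~\ref{ncriintersecm}\ref{it:ncriintersecm:2} gives
$$\ri(\dom B)=\ri\Big(\bigcap_{i=1}^{m-1}\dom A_i\Big)=\bigcap_{i=1}^{m-1}\ri(\dom A_i).$$
Therefore $\ri(\dom B)\cap\ri(\dom A_m)=\bigcap_{i=1}^{m}\ri(\dom A_i)\neq\fady$ by assumption. Applying Fact~\ref{maxm_rock} to $B$ and $A_m$ yields that $B+A_m=A_1+\cdots+A_m$ is maximally monotone, completing the induction.

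The main obstacle I anticipate is the justification of the domain formula $\dom(A_1+\cdots+A_{m-1})=\bigcap_{i=1}^{m-1}\dom A_i$; this is elementary from the definition of the pointwise sum of set-valued operators (a point is in the domain of the sum exactly when it is in the domain of each summand), but it must be invoked cleanly so that Corollary~\ref{ncriintersecm} applies to the sets $\dom A_i$. A secondary point worth stating explicitly is that Corollary~\ref{ncriintersecm} requires the $\dom A_i$ to be nearly convex, which is supplied by Fact~\ref{D-R-nc}; everything else is bookkeeping for the induction.
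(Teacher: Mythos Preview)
Your proposal is correct and follows essentially the same argument as the paper: induction on the number of operators, with Fact~\ref{D-R-nc} supplying near convexity of the domains, Corollary~\ref{ncriintersecm}\ref{it:ncriintersecm:2} converting $\ri\big(\bigcap_i \dom A_i\big)$ into $\bigcap_i \ri(\dom A_i)$, and Fact~\ref{maxm_rock} handling the two-operator step. The only cosmetic differences are that the paper indexes the induction as $m\to m+1$ rather than $m-1\to m$, and it leaves the identity $\dom(A_1+\cdots+A_{m})=\bigcap_i\dom A_i$ implicit, whereas you spell it out.
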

\begin{proof}
We proceed via induction. When $m=2$, the proof follows from Fact~\ref{maxm_rock}. Next, assume that for
$m\in \NN $ with $m\geq 2$
we have $\cap_{i=1}^{m}\ri(\dom A_i)\neq \fady$ implies that
$A_1+\cdots+A_m$ is maximally monotone.
Now suppose that $\cap_{i=1}^{m+1}\ri (\dom A_i)\neq \fady$.
By Fact~\ref{D-R-nc}, for all $i\in \stb{1,\ldots,m+1}$, $\dom A_i$ is nearly convex set.
Moreover, by Corollary~\ref{ncriintersecm}\ref{it:ncriintersecm:2},
\begin{align*}
 \cap_{i=1}^{m+1}\ri \dom A_i&=\cap_{i=1}^{m}\ri \dom A_i\cap \ri \dom A_{m+1}\\
&=\ri\big( \cap_{i=1}^{m} \dom A_i\big)\cap \ri \dom A_{m+1}\\
&=\ri \dom (A_1+\cdots+A_m)\cap \ri \dom A_{m+1}.
\end{align*}
Therefore, $\cap_{i=1}^{m+1}\ri \dom A_i\neq \fady$ implies
that $\cap_{i=1}^{m}\ri \dom A_i\neq\fady$ and
that $\ri \dom (A_1+\cdots+A_m)\cap \ri \dom A_{m+1}\neq \fady$.
By the inductive hypothesis we have $A_1+\cdots+A_m$ is maximally monotone. The proof then follows from
applying Fact~\ref{maxm_rock} to
the maximally monotone operators $A_1+\cdots+A_m$ and $A_{m+1}$.
\end{proof}

\subsection{Further closedness results}
\begin{theorem}\label{nc:recC:CL:I}
Let $E$ be a nonempty nearly convex subset in $\RR^n$
and let $A$ be a linear transformation from $\RR^n$ to $\RR^m$.
Suppose that $(\forall z\in \rec(\closu{E})\setminus\stb{0})$
with $Az=0$ we have that $z$ belongs to the lineality space of
$\closu{E}$. Then
$$\closu{AE}=A(\closu{E}),$$
and
$$\rec A(\closu{E})=A(\rec(\closu{E})).$$
\end{theorem}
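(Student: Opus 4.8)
The strategy is to reduce the nearly convex case to the convex case by working with $\closu{E}$ and then invoking Rockafellar's Fact~\ref{f:recC:CL:I}. First I would observe that since $E$ is nearly convex, $\closu{E}$ is a nonempty closed convex subset of $\RR^n$ by Fact~\ref{l:nearconset}\ref{o:c}, and hence $\rec(\closu{E})$ is a closed convex cone by Fact~\ref{f:recC}. The hypothesis — that every $z\in\rec(\closu{E})\setminus\stb{0}$ with $Az=0$ lies in the lineality space of $\closu{E}$ — is \emph{literally} the hypothesis of Fact~\ref{f:recC:CL:I} applied to the convex set $C:=\closu{E}$. Therefore Fact~\ref{f:recC:CL:I} immediately yields
\[
\closu{(A\closu{E})}=A(\closu{E})\qquad\text{and}\qquad \rec A(\closu{E})=A[\rec(\closu{E})].
\]
This already gives the second displayed identity of the theorem verbatim.

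It remains to establish $\closu{AE}=A(\closu{E})$. The plan is to sandwich. Since $E$ is nearly convex, Theorem~\ref{thm:lin:A}\ref{thm:lin:A:1}\&\ref{thm:lin:A:2} tells us $AE$ is nearly convex and $AE\approx A(\closu{E})$, so in particular $\closu{AE}=\closu{[A(\closu{E})]}$. Combining this with the first identity from Fact~\ref{f:recC:CL:I} gives
\[
\closu{AE}=\closu{[A(\closu{E})]}=A(\closu{E}),
\]
as desired. (Alternatively, without invoking Theorem~\ref{thm:lin:A}, one can argue directly: $AE\subseteq A(\closu{E})$, which is closed by Fact~\ref{f:recC:CL:I}, so $\closu{AE}\subseteq A(\closu{E})$; conversely $A(\closu{E})=\closu{(A\closu{E})}\supseteq$... — but this direction needs $A(\ri E)\subseteq AE$ plus $\closu{(A\closu{E})}=\closu{(A\,\ri E)}$, which is exactly the content packaged in Theorem~\ref{thm:lin:A}, so the first route is cleaner.)

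I do not anticipate a serious obstacle here: the theorem is essentially a transfer of Fact~\ref{f:recC:CL:I} across the near-convexity of $E$, and all the needed transfer machinery (near equality is preserved under linear images, $\closu{E}$ is convex, $\rec(\ri E)=\rec(\closu{E})$) has been set up in the preceding sections. The only point requiring a small amount of care is making sure the lineality-space hypothesis is being used for the \emph{same} set in both the statement and in Fact~\ref{f:recC:CL:I}: the theorem phrases it in terms of the lineality space of $\closu{E}$, which matches Fact~\ref{f:recC:CL:I} applied to $C=\closu{E}$ directly, so no adjustment is needed. Thus the proof is short: invoke Fact~\ref{l:nearconset} and Fact~\ref{f:recC} to verify the convexity and cone structure, apply Fact~\ref{f:recC:CL:I} to $\closu{E}$ to get both conclusions for $\closu{E}$, and finally use Theorem~\ref{thm:lin:A} to replace $A(\closu{E})$ by $\closu{AE}$ in the first identity.
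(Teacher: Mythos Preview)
Your proposal is correct and follows essentially the same approach as the paper: reduce to the convex case via Fact~\ref{f:recC:CL:I}, then use Theorem~\ref{thm:lin:A}\ref{thm:lin:A:2} to pass from $\closu{AE}$ to the closure of the image of the convex surrogate. The only cosmetic difference is that the paper applies Fact~\ref{f:recC:CL:I} to the convex set $\ri E$ (using $\closu{(\ri E)}=\closu{E}$ to match the hypothesis) whereas you apply it directly to $\closu{E}$; both choices work and yield the same chain of equalities.
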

\begin{proof}
Since $E$ is nonempty and nearly convex,
it follows from Fact~\ref{l:nearconset} that
$\ri E$ is a nonempty convex subset
and $\closu{(\ri E)}=\closu{E}$.
Therefore by assumption on $E$ we have
$(\forall z\in \rec(\closu{(\ri E)})\setminus\stb{0}=\rec(\closu{E})\setminus\stb{0})$
with $Az=0$ we have that $z$ belongs to the lineality space of
$\closu{(\ri E)}=\closu{E}$.
Using Theorem~\ref{thm:lin:A}\ref{thm:lin:A:2} we have
\begin{equation}\label{eq:AC:AC}
\closu{AE}=\closu{[A(\ri E)]}.
\end{equation}
Using \eqref{eq:AC:AC}, Fact~\ref{f:recC:CL:I} applied to the nonempty convex set $\ri E$,
and Fact~\ref{l:nearconset},
we obtain
\begin{align}
\closu{AE}&=\closu{[A(\ri E)]}=A(\closu{(\ri E)})=A(\closu{ E}), \nonumber\\
\rec A(\closu{E})&=\rec A(\closu{(\ri E)})=A(\rec(\closu{(\ri E)}))=A(\rec(\closu{E})),
\end{align}
as claimed.
\end{proof}

As a consequence, we have:
\begin{corollary}\label{nc:recC:Cl:II}
Let $(E_i)_{i\in I}$ be a family of
nonempty nearly convex subsets in $\RR^n$
satisfying the following condition:
if $(\forall i\in I)( \exists z_i\in \rec(\closu E_i))$
and $\sum_{i\in I} z_i=0$ then $(\forall i \in I) z_i$ belongs to the lineality space
of $\closu{E_i}$. Then
$$\closu{(E_1+\cdots+E_m)}=\closu{E_1}+\cdots+\closu{E_m}=
\closu{(\ri E_{1})}+\cdots+\closu{(\ri E_{m})},$$
and
$$\rec[\closu{(E_1+\cdots+E_m)}]=\rec(\closu{E_1})+\cdots+\rec(\closu{E_m})=
\rec(\ri{E_1})+\cdots+\rec(\ri{E_m}).$$
\end{corollary}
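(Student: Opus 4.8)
The plan is to derive Corollary~\ref{nc:recC:Cl:II} from Theorem~\ref{nc:recC:CL:I} in exactly the same way that Fact~\ref{f:recC:Cl:II} is derived from Fact~\ref{f:recC:CL:I} in the convex setting, namely via the diagonal-sum linear map. First I would set $A\colon\RR^n\times\cdots\times\RR^n\to\RR^n$ to be $A(\bx_1,\ldots,\bx_m):=\bx_1+\cdots+\bx_m$, and put $E:=E_1\times\cdots\times E_m\subseteq\RR^n\times\cdots\times\RR^n$, so that $AE=E_1+\cdots+E_m$. By Theorem~\ref{thm:lin:A}\ref{thm:lin:A:1} (applied repeatedly, or directly since a finite product of nearly convex sets is nearly convex) $E$ is nearly convex, and by Proposition~\ref{p:product:rec}\ref{i:rec:prod2} we have $\rec(\closu{E})=\rec(\closu{E_1})\times\cdots\times\rec(\closu{E_m})$. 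I would also record that the lineality space of $\closu{E}$ is the product of the lineality spaces of the $\closu{E_i}$, which follows from $\closu{E}=\closu{E_1}\times\cdots\times\closu{E_m}$ (Fact~\ref{l:nearconset} / Proposition~\ref{p:productset}\ref{i:product}) together with Proposition~\ref{p:product:rec}\ref{i:rec:prod1} and the fact that a subspace contained in a product cone is contained in the product of the largest subspaces in the factors.

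The next step is to translate the hypothesis of the corollary into the hypothesis of Theorem~\ref{nc:recC:CL:I} for this particular $A$ and $E$. A vector $z=(z_1,\ldots,z_m)\in\rec(\closu{E})=\rec(\closu{E_1})\times\cdots\times\rec(\closu{E_m})$ satisfies $Az=0$ precisely when $\sum_{i\in I}z_i=0$; the corollary's assumption then says each $z_i$ lies in the lineality space of $\closu{E_i}$, which by the previous paragraph is exactly the statement that $z$ lies in the lineality space of $\closu{E}$. Hence the kernel condition of Theorem~\ref{nc:recC:CL:I} holds for $A$ and $E$, and applying that theorem gives
\begin{equation}
\closu{(E_1+\cdots+E_m)}=\closu{AE}=A(\closu{E})=\closu{E_1}+\cdots+\closu{E_m},
\end{equation}
and
\begin{equation}
\rec[\closu{(E_1+\cdots+E_m)}]=\rec A(\closu{E})=A(\rec(\closu{E}))=\rec(\closu{E_1})+\cdots+\rec(\closu{E_m}).
\end{equation}

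Finally I would obtain the two remaining equalities involving relative interiors for free: since each $E_i$ is nearly convex, Fact~\ref{l:nearconset} gives $\closu{E_i}=\closu{(\ri E_i)}$, so $\closu{E_1}+\cdots+\closu{E_m}=\closu{(\ri E_1)}+\cdots+\closu{(\ri E_m)}$, and Lemma~\ref{NC:recriC:recclC} gives $\rec(\closu{E_i})=\rec(\ri E_i)$, so $\rec(\closu{E_1})+\cdots+\rec(\closu{E_m})=\rec(\ri E_1)+\cdots+\rec(\ri E_m)$. This completes the argument.

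The only genuinely delicate point — the step I expect to be the main obstacle — is verifying cleanly that the lineality space of a (closed, convex) product cone is the product of the lineality spaces of the factors, and hence that the corollary's hypothesis really is equivalent to the $A$-kernel hypothesis of Theorem~\ref{nc:recC:CL:I}. This is elementary (the lineality space of $\closu{E}$ equals $\rec(\closu{E})\cap(-\rec(\closu{E}))$, which by Proposition~\ref{p:product:rec}\ref{i:rec:prod1} splits as a product), but it must be stated carefully so that "$z_i$ belongs to the lineality space of $\closu{E_i}$ for every $i$" is seen to be identical to "$z$ belongs to the lineality space of $\closu{E}$"; everything else is a routine specialization of the already-established linear-image results.
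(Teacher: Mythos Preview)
Your proposal is correct and follows essentially the same approach as the paper: the paper defines the sum map $A(\bx_1,\ldots,\bx_m):=\bx_1+\cdots+\bx_m$, sets $E:=E_1\times\cdots\times E_m$, and then says it suffices to apply Theorem~\ref{nc:recC:CL:I}, Lemma~\ref{NC:recriC:recclC}, and Proposition~\ref{p:product:rec}. Your write-up is in fact more detailed than the paper's, as you spell out the lineality-space identification that the paper leaves implicit.
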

\begin{proof}
Define a linear mapping $A:\RR^n\times \cdots\times \RR^n \rightarrow \RR^n$ by
$A(\bx_{1},\ldots, \bx_{m}):=\bx_{1}+\cdots+\bx_{m}$ where $\bx_{i}\in\RR^n$. The set
$E:=E_{1}\times \cdots\times E_{m}$ is nearly convex in $\RR^n\times \cdots\times \RR^n$.
It suffices to apply Theorem~\ref{nc:recC:CL:I}, Lemma~\ref{NC:recriC:recclC},
and Proposition~\ref{p:product:rec}.
\end{proof}

Corollary~\ref{nc:recC:Cl:II} generalizes Fact~\ref{f:recC:Cl:II} from convex sets to nearly
convex sets.

\section{Examples of nearly convex sets as ranges or domains of subdifferential operators}\label{s:range:domain}
It is natural to ask: is every nearly convex set a domain or a range of the subdifferential of a lower semicontinuous
convex function, or a domain or a range of a maximally monotone operator? While we cannot answer the question,
we construct some interesting proper lower semicontinuous convex functions with prescribed domains or
ranges of subdifferentials. Our constructions rely on the sum rule of subdifferentials for a sum of convex functions,
while each convex function has a subdifferential domain with specific properties.

Recall that for a proper lower semicontinuous convex function $f: \RR^n \to \RX$, its
subdifferential at $x \in \RR^n$ is
$$\partial f(x):=\{ u \in \RR^n \suchthat (\forall y \in \RR^n) \; \scal{y-x}{u}+f(x)\leq f(y)\}$$
when $f(x)<+\infty$; and $\partial f(x):=\varnothing$ when $f(x)=+\infty$.
If $f$ is continuous at $x$ (e.g., when $x\in\inte\dom f$), then $\partial f(x)\neq\varnothing$;
if $f$ is differentiable at $x$, then $\partial f(x)=\{\nabla f(x)\}$. Fact~\ref{f:subcharacter}
provides a convenient tool to compute
$\partial f$. A celebrated result due to Rockafellar states that the subdifferential mapping
$\partial f$ is a maximally monotone operator, see, e.g., \cite[Theorem 12.17]{Rock98}, \cite[Theorem 3.1.11]{Zalinescu}.
In \cite[page 218]{Rock70}, Rockafellar gave a proper lower semicontinuous convex function
whose subdifferential domain is not convex. (Rockafellar's function is Example~\ref{Rock:ex:gen} when $\alpha =1$.)
According to Fact~\ref{D-R-nc}, $\dom \partial f$ must be nearly convex.  The Fenchel conjugate $f^*$ of $f$ is defined by
$$(\forall\ x^*\in\RR^n)\ f^*(x^*):=\sup\menge{\scal{x^*}{x}-f(x)}{x\in\RR^n}.$$
The conjugate $f^*$ is a proper lower semicontinuous convex function as long as $f$ is, and
$\partial f^*=(\partial f)^{-1}$.
 The indicator function of a set $C\subseteq\RR^n$ is
$$\iota_C(x):=\begin{cases}0 & \text{if }x\in C;\\+\infty & \text{otherwise.}\end{cases}$$

A brief orientation about our main achievements in this section is as follows. We show that: Every open or closed convex
set
 in $\RR^n$
is a domain of a subdifferential mapping, so are their intersections under a constraint qualification;
Every nearly convex set in $\RR$ is
a domain of a subdifferential mapping; In $\RR^2$ every polyhedral sets with its edges removed but keeping its vertices
is a domain of a subdifferential mapping.

\subsection{Some general results}

A set $A\subseteq\RR^n$ is absorbing if for every $x\in\RR^{n}$ there exists $s_{x}$ such that
$x\in tA$ when $t>s_{x}$. Define the gauge function of $A$ by
$$\rho_{A}(x):=\inf\menge{t}{t>0, x\in tA}.$$
Then $\rho_{A}$ is finite-valued, nonnegative, and positive homogeneous.

\begin{theorem}\label{t:openset}
Let $C\subseteq \RR^n$ be open convex set. Then there exists a lower semicontinuous convex function
$g:\RR^n\rightarrow\RX $ such that
 such that $\ran \partial g =C$, equivalently, $\dom \partial g^*=C.$
\end{theorem}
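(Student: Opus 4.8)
The goal is to produce, for a given open convex set $C\subseteq\RR^n$, a proper lower semicontinuous convex $g$ with $\ran\partial g = C$. Since $\ran\partial g = \dom\partial g^*$ via $\partial g^* = (\partial g)^{-1}$, it is equivalent (and more natural) to build an l.s.c.\ convex $h := g^*$ with $\dom\partial h = C$ and then set $g := h^*$; by biconjugation $g^* = h^{**} = h$, so $\ran\partial g = \dom\partial h = C$. So I would reduce the theorem to: \emph{every open convex set is the subdifferential domain of some proper l.s.c.\ convex function.}

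First I would dispose of the trivial case $C=\varnothing$ (take $h\equiv+\infty$ — but the statement presumably intends $C$ nonempty; if $C=\RR^n$ take $h=0$). For nonempty $C\neq\RR^n$, the idea is to force the subdifferential to be empty exactly on the boundary and outside, while keeping $h$ finite and ``steep'' near $\bd C$. The cleanest construction: pick $x_0\in C$, translate so $x_0=0$, let $\rho = \rho_C$ be the gauge of $C$ (which is finite-valued, nonnegative, positively homogeneous, and convex since $C$ is convex and absorbing as it is open and contains $0$; moreover $C=\{x:\rho(x)<1\}$ because $C$ is \emph{open} convex). Then I would take
\[
h(x) := \varphi(\rho_C(x)),
\]
where $\varphi:[0,\infty)\to\RR$ is a convex, increasing, differentiable function blowing up at $1$, e.g.\ $\varphi(t) = -\log(1-t)$ for $t\in[0,1)$ and $\varphi(t)=+\infty$ for $t\ge 1$ — or, if one wants $h$ finite-valued, a function like $\varphi(t) = \tan(\pi t/2)$ composed carefully, but the $-\log(1-t)$ choice with $\dom h=C$ actually already has $\dom\partial h = \dom h = C$ since $h$ is continuous on the open set $C$ and $\partial h(x)\neq\varnothing$ for $x\in\inte\dom h = C$, while $\partial h(x)=\varnothing$ for $x\notin C$ as $h(x)=+\infty$ there. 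Thus $\dom\partial h = C$ on the nose.

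The key steps in order: (1) reduce to constructing $h$ with $\dom\partial h=C$ via conjugacy; (2) handle degenerate cases; (3) after translating $0\in C$, verify $\rho_C$ is a finite-valued nonnegative sublinear function with $C=\{\rho_C<1\}$ (here openness of $C$ is essential — for a general convex $C$ one only gets $\{\rho_C<1\}\subseteq C\subseteq\{\rho_C\le 1\}$, but open forces equality); (4) set $h = -\log(1-\rho_C)$ on $C$, $+\infty$ off $C$, and check $h$ is proper, convex (composition of the increasing convex $-\log(1-\cdot)$ with the convex $\rho_C$), and l.s.c.\ (it is continuous on $C$ and $+\infty$ outside, and $h\to+\infty$ as one approaches $\bd C$, so lower level sets are closed); (5) conclude $\dom\partial h = \inte\dom h = C$; (6) set $g=h^*$ and read off $\ran\partial g = \dom\partial g^* = \dom\partial h^{**}=\dom\partial h = C$.

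\textbf{Main obstacle.} The only real subtlety is pinning down $\dom\partial h$ \emph{exactly} equal to $C$ rather than something slightly larger or smaller: one must check that no boundary point of $C$ carries a subgradient. This is where the blow-up of $h$ near $\bd C$ does the work — if $x_b\in\bd C$ and $u\in\partial h(x_b)$ then $h(x_b)+\scal{u}{x-x_b}\le h(x)$ for all $x$, but $h(x_b)=+\infty$ makes $\partial h(x_b)=\varnothing$ by definition, so in fact this is immediate once $\dom h=C$; the genuine care is in verifying $\dom h = C$ (again openness) and l.s.c.\ at boundary points, which follows because $\liminf_{x\to x_b}h(x)=+\infty=h(x_b)$. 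A secondary point is to state the result cleanly when $C$ is unbounded or not containing a ball in a coordinate subspace — but since $C$ is open in $\RR^n$, $\aff C = \RR^n$ and these issues do not arise. I expect no step to require a long computation; the argument is essentially a gauge-function construction plus the standard fact that a convex function has nonempty subdifferential on the interior of its domain.
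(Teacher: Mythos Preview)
Your proposal is correct and follows essentially the same route as the paper: translate so that $0\in C$, compose the gauge $\rho_C$ with an increasing convex function blowing up at $1$ (you use $-\log(1-t)$, the paper uses $\tfrac{1}{1-t}$), obtain a function $h$ with $\dom\partial h=C$, and pass to the conjugate. The only cosmetic difference is that the paper reads off $\dom\partial h=C$ from the explicit chain-rule formula $\partial h(x)=\varphi'(\rho_C(x))\,\partial\rho_C(x)$, whereas you invoke the general fact that the subdifferential is nonempty on $\inte\dom h$; both arguments are equally short.
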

\begin{proof}
Take $x_0\in C$, and let $A:=C-x_{0}$. Define the lower semicontinuous convex function
$$g(x):=\begin{cases}
\frac{1}{1-\rho_{A}(x)} & \text{ if $x\in A$,}\\
+\infty & \text{ otherwise}.
\end{cases}
$$
The convexity of $g$ follows from that $\rho_{A}$ is a finite-valued convex function
on $\RR^n$,
$A=\menge{x\in\RR^n}{\rho_{A}(x)<1}$ by \cite[Exercise 2.15]{Fabian}, and that
$t\rightarrow \frac{1}{1-t}$ is increasing and convex on $[0,1)$.
Since
$$\partial g(x)=\begin{cases}
\frac{\partial \rho_{A}(x)}{(1-\rho_{A}(x))^2} & \quad (\forall x\in A),\\
\varnothing & \text{otherwise,}
\end{cases}
$$
we have $\dom \partial g=A$, so $\ran \partial g^*=\ran (\partial g)^{-1}=A$.
Then $\ran \partial (g^* +\scal{x_{0}}{\cdot})=A+x_{0}=C$.
\end{proof}

\begin{theorem}\label{t:closedset}
Every nonempty closed convex set $C\subseteq \RR^n$ is a domain or
range of a subdifferential mapping of a proper lower semicontinuous
convex function.
\end{theorem}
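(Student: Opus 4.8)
The plan is to realize $C$ as the domain of a subdifferential by the simplest possible choice, namely $f:=\iota_C$. First I would check that $f$ is admissible: since $C\neq\varnothing$ the function $f$ is proper; since $C$ is closed, $\epi f=C\times\RP$ is closed, so $f$ is lower semicontinuous; and since $C$ is convex, $f$ is convex. Thus $f$ is a proper lower semicontinuous convex function on $\RR^n$.

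Next I would compute $\partial f$. For $x\notin C$ we have $f(x)=+\infty$, hence $\partial f(x)=\varnothing$. For $x\in C$, by definition $u\in\partial f(x)$ if and only if $\scal{y-x}{u}\le \iota_C(y)$ for all $y\in\RR^n$; this is automatic when $y\notin C$ and amounts to $\scal{y-x}{u}\le0$ for all $y\in C$, so $\partial f(x)=N_C(x)$. The only remaining point is the (trivial) observation that $0\in N_C(x)$ for every $x\in C$, so $N_C(x)\neq\varnothing$ exactly when $x\in C$. Hence $\dom\partial f=C$, exhibiting $C$ as the domain of the subdifferential mapping of the proper lower semicontinuous convex function $\iota_C$.

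Finally, for the ``range'' alternative one may pass to the conjugate: using $\partial f^{*}=(\partial f)^{-1}$ recorded earlier in the section, $\ran\partial\iota_C^{*}=\ran(\partial\iota_C)^{-1}=\dom\partial\iota_C=C$, so $C$ is also the range of the subdifferential mapping of the support function $\iota_C^{*}$. There is essentially no obstacle in this argument: the substance is just that a nonempty closed convex set is the effective domain of its own normal cone operator. (This is in contrast with the open-set case of Theorem~\ref{t:openset}, which is harder precisely because the indicator of an open set fails to be lower semicontinuous, forcing the gauge construction used there.)
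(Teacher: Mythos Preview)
Your proof is correct and follows exactly the paper's approach: take $f:=\iota_C$, observe $\partial\iota_C=N_C$ so that $\dom\partial\iota_C=C$, and then pass to the conjugate $\sigma_C=\iota_C^*$ to get $\ran\partial\sigma_C=C$. You simply spell out more of the routine verifications (properness, lower semicontinuity, $0\in N_C(x)$) than the paper does.
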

\begin{proof}
The proper lower semicontinuous convex function
$\iota_{C}$ has $\partial \iota_{C}=N_{C}$ so that
$\dom \partial \iota_{C}=C.$
Its Fenchel conjugate  $\sigma_{C}:=\iota_{C}^*$
has $\ran \partial\sigma_{C}=\ran(\partial \iota_{C})^{-1}=C.$
\end{proof}

\begin{corollary}
Assume that $(\forall i= 1, \ldots, m)\quad O_{i}\subseteq \RR^n$ is open convex, that $
(\forall j=1,\ldots, k) \quad F_{j}\subseteq \RR^n$ is closed convex, and that
\begin{equation}\label{e:cq}
(\cap_{i=1}^{m} O_{i})\bigcap  (\cap_{j=1}^{k}\reli F_{j})\neq\varnothing.
\end{equation}
 Then
there exists a lower semicontinuous convex function $f:\RR^n\rightarrow \RX$ such that
$\dom \partial f= (\cap_{i=1}^{m} O_{i})\bigcap (\cap_{j=1}^{k}F_{j})$, equivalently,
$\ran \partial f^*=(\cap_{i=1}^{m} O_{i})\bigcap (\cap_{j=1}^{k}F_{j})$.
\end{corollary}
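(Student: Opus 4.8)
The plan is to combine the two construction theorems just proved (Theorem~\ref{t:openset} for open convex sets, Theorem~\ref{t:closedset} for closed convex sets) with the subdifferential sum rule and the intersection calculus for nearly convex sets (Corollary~\ref{ncriintersecm}). First I would invoke Theorem~\ref{t:openset} to obtain, for each $i=1,\ldots,m$, a proper lower semicontinuous convex function $g_i:\RR^n\to\RX$ with $\dom\partial g_i=O_i$, and Theorem~\ref{t:closedset} to obtain, for each $j=1,\ldots,k$, a proper lower semicontinuous convex function $h_j:\RR^n\to\RX$ with $\dom\partial h_j=F_j$ (e.g.\ $h_j=\iota_{F_j}$, so $\partial h_j=N_{F_j}$). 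Then set
$$f:=g_1+\cdots+g_m+h_1+\cdots+h_k.$$
This is again proper lower semicontinuous convex, and the goal is to show $\dom\partial f=(\cap_{i=1}^m O_i)\cap(\cap_{j=1}^k F_j)$.

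The key step is to justify the exact subdifferential sum rule $\partial f=\sum_i\partial g_i+\sum_j\partial h_j$ with equality (not just inclusion). The standard hypothesis for this is a relative-interior constraint qualification: $\bigcap_i\reli\dom g_i\cap\bigcap_j\reli\dom h_j\neq\varnothing$, i.e.\ $(\cap_i\reli O_i)\cap(\cap_j\reli F_j)\neq\varnothing$. Since each $O_i$ is open, $\reli O_i=O_i$, so this is exactly the hypothesis \eqref{e:cq}. With the sum rule in hand, $\dom\partial f\supseteq$ the claimed set is immediate since each summand's subdifferential is nonempty there; for the reverse inclusion one notes $\dom\partial f\subseteq\dom f=\bigcap_i\dom g_i\cap\bigcap_j\dom h_j=(\cap_i O_i)\cap(\cap_j F_j)$, which is already the right set. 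Actually the cleanest argument computes $\dom\partial f$ directly: by the sum rule, $x\in\dom\partial f$ iff each of $\partial g_1(x),\ldots,\partial h_k(x)$ is nonempty, i.e.\ $x\in\bigcap_i\dom\partial g_i\cap\bigcap_j\dom\partial h_j=(\cap_i O_i)\cap(\cap_j F_j)$. Finally, $\ran\partial f^*=\ran(\partial f)^{-1}=\dom\partial f$ gives the ``equivalently'' clause.

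The main obstacle is making sure the subdifferential sum rule applies with equality and with the weakest possible qualification, so that \eqref{e:cq} — rather than a stronger interior condition — genuinely suffices; here the role of the nearly convex calculus (Corollary~\ref{ncriintersecm}\ref{it:ncriintersecm:2}, which identifies $\reli(\cap_j F_j)$ with $\cap_j\reli F_j$ and more generally handles relative interiors of finite intersections) is to reduce the multi-function qualification to the single stated condition and to confirm that $\dom\partial f$ is itself nearly convex as it must be by Fact~\ref{D-R-nc}. A secondary point worth a line is that $g_i,h_j$ are each proper with the indicated effective domains, so that $f$ is proper precisely because \eqref{e:cq} guarantees a common point of all the domains. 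One should cite a standard reference for the sum rule under the relative-interior CQ, e.g.\ \cite[Theorem 23.8]{Rock70}.

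\begin{proof}
By Theorem~\ref{t:openset}, for each $i\in\{1,\ldots,m\}$ there is a proper lower semicontinuous convex function $g_{i}:\RR^n\to\RX$ with $\dom\partial g_{i}=O_{i}$; since $O_i$ is open, also $\reli\dom g_i\supseteq\reli O_i=O_i$, and in fact $\dom g_i\supseteq O_i$. By Theorem~\ref{t:closedset}, taking $h_{j}:=\iota_{F_{j}}$ we get $\partial h_{j}=N_{F_{j}}$, so $h_j$ is proper lower semicontinuous convex with $\dom h_j=\dom\partial h_j=F_{j}$ and $\reli\dom h_j=\reli F_j$. Put
$$f:=g_{1}+\cdots+g_{m}+h_{1}+\cdots+h_{k}.$$
Then $\dom f=\big(\cap_{i=1}^{m}\dom g_i\big)\cap\big(\cap_{j=1}^{k}F_j\big)$, which by \eqref{e:cq} is nonempty, so $f$ is a proper lower semicontinuous convex function.

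We claim $\bigcap_{i=1}^{m}\reli\dom g_i\cap\bigcap_{j=1}^{k}\reli\dom h_j\neq\varnothing$. Indeed $\reli\dom g_i\supseteq O_i$ and $\reli\dom h_j=\reli F_j$, so this intersection contains $(\cap_{i=1}^{m}O_i)\cap(\cap_{j=1}^{k}\reli F_j)$, which is nonempty by \eqref{e:cq}. Hence the subdifferential sum rule holds with equality \cite[Theorem~23.8]{Rock70}:
$$\partial f=\partial g_{1}+\cdots+\partial g_{m}+\partial h_{1}+\cdots+\partial h_{k}.$$
Consequently, for $x\in\RR^n$ we have $\partial f(x)\neq\varnothing$ if and only if $\partial g_i(x)\neq\varnothing$ for every $i$ and $\partial h_j(x)\neq\varnothing$ for every $j$, that is,
$$\dom\partial f=\Big(\bigcap_{i=1}^{m}\dom\partial g_i\Big)\cap\Big(\bigcap_{j=1}^{k}\dom\partial h_j\Big)=\Big(\bigcap_{i=1}^{m}O_i\Big)\cap\Big(\bigcap_{j=1}^{k}F_j\Big).$$
Finally, since $\partial f^{*}=(\partial f)^{-1}$, we get $\ran\partial f^{*}=\ran(\partial f)^{-1}=\dom\partial f=(\cap_{i=1}^{m}O_i)\cap(\cap_{j=1}^{k}F_j)$, which proves the equivalent formulation.
\end{proof}
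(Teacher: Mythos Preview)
Your proof is correct and follows essentially the same approach as the paper's: construct $g_i$ (via Theorem~\ref{t:openset}) with $\dom\partial g_i=O_i$, take $h_j=\iota_{F_j}$, sum them, and apply the subdifferential sum rule \cite[Theorem~23.8]{Rock70} under the relative-interior constraint qualification furnished by \eqref{e:cq}. The paper's version differs only cosmetically---it translates so that $0$ lies in the common intersection and writes out the gauge-based $g_i$ explicitly rather than citing Theorem~\ref{t:openset} as a black box---while you are a bit more explicit in checking that $\reli\dom g_i\supseteq O_i$, which is a welcome clarification.
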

\begin{proof} Without loss of generality, we can assume $0\in (\cap_{i=1}^{m} O_{i})\bigcap \reli (\cap_{j=1}^{k}F_{j})$.
For each $O_{i}$, as in Theorem~\ref{t:openset}, define a lower semicontinuous convex function
$$g_{i}(x):=\begin{cases}
\frac{1}{1-\rho_{O_{i}}(x)} & \text{ if $x\in O_{i}$,}\\
+\infty & \text{ otherwise.}
\end{cases}
$$
For each $F_{j}$, as in Theorem~\ref{t:closedset}, define a lower semicontinuous convex function
$\iota_{F_{j}}$.  For the lower semicontinuous convex function
$$f:=g_{1}+\cdots+g_{m}+\iota_{F_{1}}+\cdots+\iota_{F_{k}},$$
the constraint qualification \eqref{e:cq} guarantees that the subdifferential sum rule applies, see, e.g.,
\cite[Theorem 23.8]{Rock70}
or \cite[Corollary 16.39]{BC2011}. This gives
$$\partial f=\partial g_{1}+\cdots+\partial g_{m}+\partial \iota_{F_{1}}+\cdots +\partial \iota_{F_{k}}.$$
Therefore, the subdifferential operator $\partial f$ has
$$\dom \partial f=
(\cap_{i=1}^{m}\dom \partial g_{i})\bigcap (\cap_{j=1}^{k}\dom \partial \iota_{F_{j}})
=(\cap_{i=1}^{m} O_{i})\bigcap (\cap_{j=1}^{k}F_{j}).$$
\end{proof}

\subsection{Nearly convex sets in $\RR$}
Suppose that $C$ is a nonempty nearly convex subset of $\RR$.
Then either $C$ is a singleton or $C$ is an interval,
and consequently $C$ is convex.

\begin{theorem}
Suppose that $C$ is a nonempty nearly convex
(hence convex) subset of $\RR$.
Then there exists a proper lower semicontinuous convex function
$f:\RR\rightarrow\RX$
such that $\dom \partial f =C$. Consequently,
$\ran \partial f^*=C$.
\end{theorem}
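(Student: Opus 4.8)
The plan is to reduce the statement to the corollary immediately following Theorem~\ref{t:closedset} (the one producing a proper lower semicontinuous convex function whose subdifferential domain is $(\cap_i O_i)\cap(\cap_j F_j)$), by exploiting the fact that convex subsets of $\RR$ are extremely simple. First I would note that, since $C$ is nearly convex in $\RR$ it is convex, and a nonempty convex subset of $\RR$ is either a singleton or an interval; I set $a:=\inf C$ and $b:=\sup C$, allowing $a=-\infty$ and $b=+\infty$, so that $C$ is one of $[a,b]$, $]a,b[$, $[a,b[$, $]a,b]$ (with the obvious conventions when an endpoint is infinite).

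Next I would exhibit $C$ as an intersection $C=O\cap F$ with $O\subseteq\RR$ open convex, $F\subseteq\RR$ closed convex, and $O\cap\ri F\neq\varnothing$, by a short case split on which finite endpoints belong to $C$: if $a\in C$ and $b\in C$, take $O:=\RR$ and $F:=[a,b]$ (this also covers the singleton case $a=b$); if $a\notin C$ and $b\notin C$, take $O:=\,]a,b[$ (which is $\RR$ when $C=\RR$) and $F:=\RR$; if $a\in C$ and $b\notin C$, take $O:=\,]-\infty,b[$ and $F:=[a,+\infty[$; and symmetrically, if $a\notin C$ and $b\in C$, take $O:=\,]a,+\infty[$ and $F:=\,]-\infty,b]$. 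In each case one checks at once that $O\cap F=C$, and that $O\cap\ri F$ contains $]a,b[$ when $a<b$ (and equals $\{a\}$ in the singleton case), so the constraint qualification $O\cap\ri F\neq\varnothing$ holds.

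With this decomposition at hand I would apply the corollary following Theorem~\ref{t:closedset} with $m=k=1$, $O_1:=O$, $F_1:=F$: its hypothesis is precisely $O\cap\ri F\neq\varnothing$, so it yields a proper lower semicontinuous convex $f:\RR\to\RX$ with $\dom\partial f=O\cap F=C$ (properness being automatic since $f$ is finite on $C\neq\varnothing$). Finally, using $\partial f^*=(\partial f)^{-1}$ together with $\ran(A^{-1})=\dom A$, I would conclude $\ran\partial f^*=\dom\partial f=C$, establishing the ``consequently'' clause.

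I do not expect a genuine obstacle here: the only real content is the elementary verification in the second step that the chosen $O$ and $F$ meet exactly in $C$ and satisfy the constraint qualification. The open and closed cases are already Theorems~\ref{t:openset} and \ref{t:closedset}, and should one wish to bypass the corollary, the two remaining bounded half-open cases can be handled directly; e.g.\ for $C=[a,b[$ the function $f$ equal to $1/(b-x)$ on $[a,b[$ and $+\infty$ elsewhere is proper, lower semicontinuous, convex, differentiable on $]a,b[$ with finite right derivative at $a$ (so $\partial f(a)\neq\varnothing$) and $+\infty$ off $[a,b[$, whence $\dom\partial f=[a,b[$, and symmetrically for $]a,b]$.
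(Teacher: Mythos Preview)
Your proposal is correct and takes a genuinely different route from the paper. The paper proceeds by an explicit case-by-case construction: for closed $C$ it uses $\iota_C$; for open half-lines it uses $1/(x-a)$; for a bounded open interval $]a,b[$ it writes down a $-\ln\cos$--based function whose derivative is a suitably rescaled $\tan$; and for a half-open interval $]a,b]$ it uses $-\ln(x-a)+x/(b-a)$. In each case the subdifferential is computed by hand and its domain read off.

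Your argument instead observes that every convex subset of $\RR$ decomposes as $O\cap F$ with $O$ open convex, $F$ closed convex, and the constraint qualification $O\cap\ri F\neq\varnothing$ met, and then invokes the corollary following Theorem~\ref{t:closedset} once. This is shorter and more conceptual, reusing machinery already developed in the paper rather than producing four ad hoc formulas. What the paper's approach buys in return is an explicit catalogue of functions---concrete $f$'s with known derivatives on each interval type---which is in the spirit of the section's goal of \emph{constructing} examples, and is closer in flavour to the hands-on building blocks (Examples~\ref{Rock:ex:gen} and~\ref{e:notclosed}) used later for the $\RR^2$ constructions. Your closing remark giving $f(x)=1/(b-x)$ on $[a,b[$ as a direct witness is in fact simpler than the paper's choice for the half-open case.
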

\begin{proof}
We argue by cases.\\
Case (i): $C$ is closed. This covers $C:=(-\infty,+\infty), (-\infty, b], [a, +\infty), [a,b]$
where $a, b\in\RR$.  We let $f:=\iota_{C}$. Then
$\partial f=N_{C}$ has $\dom \partial f =C$.

Case (ii): $C:=(a, +\infty)$ or $(-\infty, b)$. We only consider $C=(a,+\infty)$, since the arguments
for $C:=(-\infty,b)$ is similar. Let
$$f(x):=\begin{cases}
\frac{1}{x-a} &\text{ if $x>a$,}\\
+\infty &\text{ otherwise.}
\end{cases}
$$
Then
$$\partial f(x)=\begin{cases}
-\frac{1}{(x-a)^2} &\text{ if $x>a$,}\\
\varnothing &\text{ otherwise,}
\end{cases}$$
has
$\dom \partial f=(a,+\infty)$.

In the remaining cases, we can and do assume $a,b\in\RR$.

Case (iii): If $C:=]a,b[$ with $a,b\in \RR$ and $a<b$, we put
$$f(x):=\begin{cases}
-\frac{b-a}{\pi}\ln\cos\bigg(\frac{\pi(x-a)}{b-a}-\frac{\pi}{2}\bigg) & \text{ if $a<x<b$,}
\\
+\infty &\text{ otherwise.}
\end{cases}
$$
Then
$$\partial f(x)=
\begin{cases}
\tan\bigg(\frac{\pi}{b-a}(x-a)-\frac{\pi}{2}\bigg) & \text{ if $a<x<b$,}\\
\varnothing &\text{ otherwise.}
\end{cases}
$$
has $\dom \partial f=(a,b)$.

Case (iv): $C$ is half-open interval. Suppose without loss of generality that
$C:=]a,b]$ with $a,b\in \RR$ and $a<b$.
We put
$$f(x):=\begin{cases}
-\big(\ln(x-a)-\frac{x}{(b-a)}\big) & \text{ if $a<x\leq b$},\\
+\infty & \text{ otherwise.}
\end{cases}
$$
Then
\begin{equation}
\partial f(x)=\begin{cases}
\frac{1}{a-x}-\frac{1}{a-b}& \text{if } a<x<b,\\
[0,+\infty[ & \text{if } x=b,\\
\varnothing & \text{ otherwise.}\\
\end{cases}
\end{equation}
has $\dom \partial f=(a,b]$.
\end{proof}

\subsection{Nearly convex sets in $\RR^2$}
We start with a complete analysis of the classical example by Rockafellar \cite[page 218]{Rock70}.
Often in literature, it only gives that $\dom \partial f$ is not convex without details. His function
is modified for the
convenience
of our later constructions. The set of nonpositive real numbers is $\RR_{-}:=\menge{x\in\RR}{x\leq 0}$.
\begin{example}\label{Rock:ex:gen}
Let $\alpha >0$ and define $f(\xi_1,\xi_2):\RR^2\to \RX$ by
\begin{equation}
f(\xi_1,\xi_2):=\begin{cases}
\max\stb{\alpha-{\xi_1}^{\tfrac{1}{2}},\abs{\xi_2}} & \text{if }\xi_1\ge 0,\\
+\infty  & \text{otherwise.}
\end{cases}
\end{equation}
Then $\pt f(\xi_1,\xi_2)=$
\begin{equation}\label{eq:sub:cases}
\begin{cases}
\fady &\text{ if $\xi_{1}<0$,}\\
\fady   & \text{if } \xi_1 =0, \text { and }\abs{\xi_2}<\alpha,\\
\RR_{-}\times\stb{1}    &  \text{if }  \xi_1=0, \text { and } \xi_2\ge \alpha, \\
\RR_{-}\times\stb{-1}   &  \text{if }  \xi_1=0,  \text{ and }\xi_2\le -\alpha, \\
\conv\stb{(-\tfrac{1}{2}{\xi_1}^{-{1}/{2}},0),(0,1)}
&  \text{if } {\xi_2}= \alpha-\sqrt{\xi_1}, \text { and } 0<\xi_1<\alpha^2,\\
\conv\stb{(-\tfrac{1}{2}{\xi_1}^{-{1}/{2}},0),(0,-1)}
&  \text{if } {-\xi_2}= \alpha-\sqrt{\xi_1}, \text { and } 0<\xi_1<\alpha^2,\\
(-\tfrac{1}{2}{\xi_1}^{-\tfrac{1}{2}},0) &  \text{if } 0<\xi_1<\alpha^2,
 \text { and }\alpha-\sqrt{\xi_1}>\abs{\xi_2},\\
(0,1) &  \text{if $0<\xi_{1}<\alpha^2$, } \text{ and }  \xi_2> \alpha-\sqrt{\xi_1},\\
(0,-1) &  \text{if $0<\xi_{1}<\alpha^2$, } \text{ and } -\xi_2 > \alpha-\sqrt{\xi_1},\\
\conv\stb{(-\tfrac{1}{2\alpha},0),(0,1),(0,-1)} &  \text{if }  \xi_1=\alpha^2, \text { and } \xi_2=0, \\
\conv\stb{(0,1),(0,-1)}
&   \text { if }\xi_1>\alpha^2, \text{ and } \xi_2=0,\\
(0,1) &  \text{if $\xi_{1}>\alpha^2$, and }   \xi_2>0,\\
(0,-1) &  \text{if $\xi_{1}>\alpha^2$, and }  -\xi_2 > 0.
\end{cases}
\end{equation}
Consequently,
\begin{equation}\label{randf}
\ran \pt f=\stb{(\xi_1,\xi_2)~|~\xi_1\le 0, \abs{ \xi_2}\le 1},
\end{equation}
and the domain of $\partial f$
\begin{align}
\dom \pt f&=\stb{(\xi_1,\xi_2)~|~\xi_1> 0,  \xi_2\in \RR}\cup
\stb{(0,\xi_2)~|~\abs{\xi_2}\ge \alpha}\label{domdf}
\end{align}
is almost convex but not convex.
Moreover, the Fenchel conjugate of $f$ is $f^*(x_1^*,x_2^*) = $
\begin{equation}
\begin{cases}
0 & \text{if }\xso\le 0 \text{ and }\abs{\xst}=1, \text{~or~}\xso= 0 \text{ and }\abs{\xst}<1,\\
\alpha^2 \xso & \text{ if $\tfrac{-1}{2\alpha}\leq \xso\leq 0$, and $|\xst|\leq 1+2\alpha\xso$,}\\
-\tfrac{(1-\abs{\xst})^2}{4{\xso}}
-\alpha(1-\abs{\xst}) & \text{if }\xso<0, \text{~and~} \max\{0,1+2\alpha \xso\}\leq \xst \leq 1,\\
-\tfrac{(1-\abs{\xst})^2}{4{\xso}}
-\alpha(1-\abs{\xst}) & \text{if }\xso<0, \text{~and~} -1\leq \xst \leq \min\{0,-(1+2\alpha \xso)\},\\
+\infty & \text{ otherwise.}
\end{cases}
\end{equation}
\end{example}
See Appendix A for its proof.  A different example is given in \cite{bbmw15}.
\begin{figure}[H]
\begin{center}
\begin{tabular}{c c c}
\includegraphics[scale=0.5]{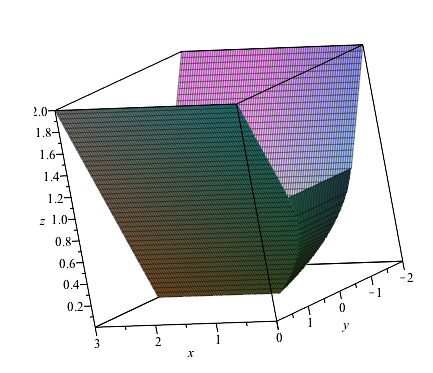}    & &
\includegraphics[scale=0.5]{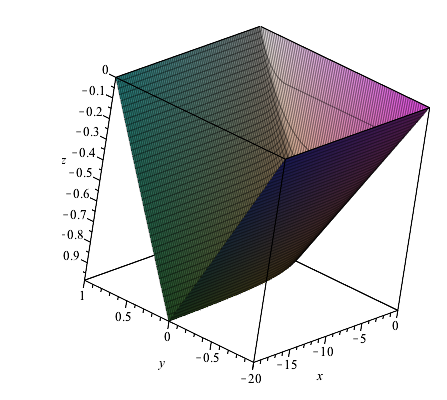}   \\
\end{tabular}
\end{center}
 \caption{A \texttt{Maple} \cite{maple} snapshot.
Left: Plot of $f$ with $\alpha=1$. Right: Plot of $f^*$ with $\alpha=1$.}
\label{fig:f:f*}
\end{figure}

\begin{example}\label{e:notclosed} Let $\alpha\geq 0$.
Define $f:\RR^2\rightarrow\RX$ by
$$f(x_{1},x_{2}):=\begin{cases}
\max\{\alpha-\sqrt{x_{1}},x_{2}\} & \text{ if $x_{1}\geq 0$},\\
+\infty & \text{ otherwise.}
\end{cases}
$$
Then
$$\partial f(x_{1},x_{2})=
\begin{cases}
(-1/2 x_{1}^{-1/2},0) & \text{ if $x_{1}> 0$, and $x_{2}< \alpha-\sqrt{x_{1}}$,}\\
\conv\{(0,1), (-1/2 x_{1}^{-1/2},0)\} &\text{ if $x_{1}> 0$, and $x_{2}=\alpha-\sqrt{x_{1}}$,}\\
\{(0,1)\} &\text{ if $x_{1}> 0$, and $x_{2}>\alpha-\sqrt{x_{1}}$,}\\
\varnothing &\text{ if $x_{1}=0$, and $x_{2}<\alpha$,}\\
\RR_{-}\times\{1\} &\text{ if $x_{1}=0$, and $x_{2}\geq \alpha$,}\\
\varnothing & \text{ if $x_{1}<0$.}
\end{cases}
$$
In particular,
$\dom \partial f=\menge{(x_{1},x_{2})}{x_{1}\geq 0}\setminus\menge{(0,x_{2})}{x_{2}<\alpha}$
is neither open nor closed, but it is convex. The same holds for the range
$$\ran\partial f=\menge{(x_{1},x_{2})}{x_{1}\leq 0, 0\leq x_{2}\leq 1}\setminus
\menge{(0,x_{2})}{0\leq x_{2}<1}.$$
\end{example}

See Appendix A for its proof.

We are finally positioned to construct proper lower semicontinuous convex functions on $\RR^2$
whose subdifferential
domains are nearly polyhedral sets but nonconvex.
Recall that $C\subseteq\RR^n$ is said to be polyhedral if it can be expressed as the intersection of a
finite family closed half spaces or hyperplanes, i.e.,
$$C=\big(\cap_{i=1}^{m}\menge{x\in\RR^n}{f_{i}(x)\leq 0}\big)\bigcap
\big(\cap_{j=1}^{k}\menge{x\in\RR^n}{f_{j}(x)=0}\big)$$
where each $f_{i}$ is affine.

\begin{theorem}\label{thm:polygon:noedge} In $\RR^2$, every polyhedral set $C$ having a nonempty interior and
having edges removed but keeping all its vertices is a domain of a subdifferential mapping
 of a proper
lower
semicontinuous convex function on $\RR^2$.
\end{theorem}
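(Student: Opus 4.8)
\emph{Proof proposal.} The plan is to write the set $C$ of the statement as a finite intersection $\bigcap_{i=1}^{m}\dom\partial g_{i}$ of subdifferential domains, where each $g_{i}$ is obtained from Example~\ref{Rock:ex:gen} or Example~\ref{e:notclosed} by an affine change of variables, and then to add the $g_{i}$ using the subdifferential sum rule. Let $P$ be the polyhedral set carrying the edges, so that $C$ arises from $P$ by deleting the relative interior of every edge while keeping every vertex. First I would fix an irredundant representation $P=\bigcap_{i=1}^{m}H_{i}$ with $H_{i}=\menge{x\in\RR^{2}}{\scal{a_{i}}{x}\le\beta_{i}}$ and bounding lines $L_{i}:=\menge{x\in\RR^{2}}{\scal{a_{i}}{x}=\beta_{i}}$; by hypothesis $\inte P=\bigcap_{i=1}^{m}\inte H_{i}\neq\varnothing$. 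By irredundancy each $e_{i}:=P\cap L_{i}$ is exactly the $i$-th edge of $P$ (a one-dimensional face), and every edge of $P$ arises this way. I would then record the elementary identity
\[
C=P\setminus\bigcup_{i=1}^{m}\ri e_{i}=\bigcap_{i=1}^{m}\bigl(H_{i}\setminus\ri e_{i}\bigr),
\]
which holds because a point of $\inte P$ lies on no $L_{i}$, a vertex of $P$ is an extreme point of $P$ and hence lies in no $\ri e_{i}$, and $L_{i}\setminus e_{i}$ lies outside $P$. If $P$ has no vertex then $P$ is a half-plane, a slab, or $\RR^{2}$, so $C=\inte P$ is open convex and, as in the proof of Theorem~\ref{t:openset}, there is a proper lower semicontinuous convex $f$ with $\dom\partial f=\inte P$; hence I may assume $P$ has a vertex, in which case no $e_{i}$ is a full line and each $e_{i}$ is a bounded segment or a ray.

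The next step is to manufacture, for each $i$, a proper lower semicontinuous convex function $g_{i}$ with $\dom\partial g_{i}=H_{i}\setminus\ri e_{i}$ and $\ri(\dom g_{i})=\inte H_{i}$. Pick an invertible affine map $T_{i}$ (post-composed with the reflection $(\xi_{1},\xi_{2})\mapsto(\xi_{1},-\xi_{2})$ if needed) that takes $L_{i}$ onto $\menge{(\xi_{1},\xi_{2})}{\xi_{1}=0}$, takes $H_{i}$ onto $\menge{(\xi_{1},\xi_{2})}{\xi_{1}\ge 0}$, and takes $e_{i}$ onto $\menge{(0,\xi_{2})}{|\xi_{2}|\le\alpha}$ in the bounded case, respectively onto $\menge{(0,\xi_{2})}{\xi_{2}\le\alpha}$ in the ray case, for a suitable $\alpha\in\RR$. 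Since $T_{i}$ is invertible, the affine chain rule \cite[Theorem~23.9]{Rock70} gives, for any proper lower semicontinuous convex $h$, that $\partial(h\circ T_{i})(x)=L_{i}^{\top}\,\partial h(T_{i}x)$, whence $\dom\partial(h\circ T_{i})=T_{i}^{-1}(\dom\partial h)$ and $\dom(h\circ T_{i})=T_{i}^{-1}(\dom h)$. I would then let $g_{i}:=h\circ T_{i}$ with $h$ the function of Example~\ref{Rock:ex:gen} in the bounded case — for which $\dom\partial h=\menge{(\xi_{1},\xi_{2})}{\xi_{1}\ge 0}\setminus\menge{(0,\xi_{2})}{|\xi_{2}|<\alpha}$ and $\dom h=\menge{(\xi_{1},\xi_{2})}{\xi_{1}\ge 0}$ — and $h$ the function of Example~\ref{e:notclosed} in the ray case — for which $\dom\partial h=\menge{(\xi_{1},\xi_{2})}{\xi_{1}\ge 0}\setminus\menge{(0,\xi_{2})}{\xi_{2}<\alpha}$ and $\dom h=\menge{(\xi_{1},\xi_{2})}{\xi_{1}\ge 0}$. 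Since $T_{i}$ carries $\ri e_{i}$ onto precisely the deleted set and $H_{i}$ onto $\menge{(\xi_{1},\xi_{2})}{\xi_{1}\ge 0}$, this yields $\dom\partial g_{i}=H_{i}\setminus\ri e_{i}$, $\dom g_{i}=H_{i}$, and $\ri(\dom g_{i})=\inte H_{i}$.

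Finally I would put $f:=g_{1}+\cdots+g_{m}$, a proper lower semicontinuous convex function. Because $\ri(\dom g_{i})=\inte H_{i}$ for each $i$ and $\bigcap_{i=1}^{m}\inte H_{i}=\inte P\neq\varnothing$, the constraint qualification for the subdifferential sum rule is satisfied (\cite[Theorem~23.8]{Rock70}, \cite[Corollary~16.39]{BC2011}), so $\partial f=\partial g_{1}+\cdots+\partial g_{m}$; consequently $\partial f(x)\neq\varnothing$ precisely when every $\partial g_{i}(x)\neq\varnothing$. Using the identity from the first step,
\[
\dom\partial f=\bigcap_{i=1}^{m}\dom\partial g_{i}=\bigcap_{i=1}^{m}\bigl(H_{i}\setminus\ri e_{i}\bigr)=C,
\]
which is the assertion; equivalently $\ran\partial f^{*}=\ran(\partial f)^{-1}=C$.

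The analytic work is essentially already done, since the subdifferentials in Examples~\ref{Rock:ex:gen} and~\ref{e:notclosed} are computed there; the part that needs care — and the only place where I expect genuine effort — is the polyhedral bookkeeping: checking that in an irredundant representation the sets $P\cap L_{i}$ are exactly the edges (including unbounded edges and the degenerate low–facet–count shapes), that the displayed identity for $C$ holds in every configuration, and that the affine maps $T_{i}$ can indeed be chosen to carry $\ri e_{i}$ onto exactly the open segment, respectively ray, that is removed from the model closed half-plane in the two examples.
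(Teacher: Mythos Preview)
Your proof is correct and follows essentially the same strategy as the paper: manufacture one function per supporting half-plane via an affine change of variables on Example~\ref{Rock:ex:gen} (bounded edges) or Example~\ref{e:notclosed} (ray edges), add them, and invoke the subdifferential sum rule under the constraint qualification $\inte P\neq\varnothing$. The paper additionally throws in the summand $\iota_{\closu{C}}$, which is redundant since $\bigcap_i H_i=P=\closu{C}$ already, and is terser about the polyhedral bookkeeping and the vertex-free case that you spell out; apart from these cosmetic differences the two arguments coincide.
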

\begin{proof}
Each polyhedral set is closed and convex \cite[Example 2.10]{Rock98}. Associated each edge $[x_{i},x_{i+1}]$ of
$C$, one can find
a closed half space $H_{i}\subseteq \RR^2$ contains $C$ and has $[x_{i},x_{i+1}]$ in its boundary.
By using translation, rotation and dilation for the convex function in Example~\ref{Rock:ex:gen},
we can get a lower semicontinuous convex function
$f_{i}:\RR^2\rightarrow\RX$ such that $\dom \partial f_{i}=H_{i}\setminus ]x_{i},x_{i+1}[$. For each edge of the form
emanating from $x_{j}$ in the direction $v_{j}$: $R_{j}=\menge{x_{j}+\tau v_{j}}{\tau\geq 0}$,  one can find
a closed half space $H_{j}\subseteq\RR^2
$ containing $C$ and has $R_{j}$ in its boundary. Again, by using
translation, rotation, and dilation for the convex function in Example~\ref{e:notclosed},
we get a lower semicontinuous convex function
$f_{j}:\RR^2\rightarrow\RX$ such that
$\dom \partial f_{j}=H_{j}\setminus \menge{x_{j}+\tau v_{j}}{\tau > 0}$.
Define the lower semcontinuous convex function $f:\RR^2\rightarrow\RX$ by
$$f:=\iota_{\closu{C}}+\sum_{i\in I}f_{i}+\sum_{i\in J}f_{j}.$$
As $\inte C\neq\varnothing$, by the subdifferential sum rule we have
$\partial f=\partial\iota_{\closu{C}}+\sum_{i\in I}\partial f_{i}+\sum_{i\in J}\partial f_{j}.$
The maximally monotone operator $\partial f$ has
$\dom \partial f =C$.
\end{proof}

Immediately from Theorem~\ref{thm:polygon:noedge}, we see that
each set in Figure~\ref{fig:nc:set} is the subdifferential domain
of a proper lower semicontinuous convex function on $\RR^2$.
\begin{figure}[H]
\begin{center}
\begin{tabular}{c c c}
\includegraphics[scale=0.5]{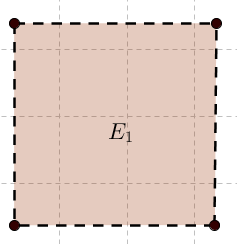}    & &
\includegraphics[scale=0.5]{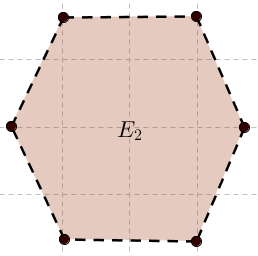}   \\
\includegraphics[scale=0.5]{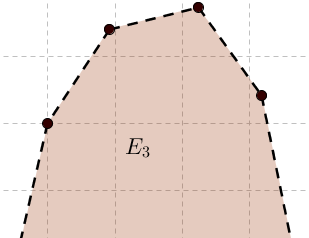}    & &
\includegraphics[scale=0.5]{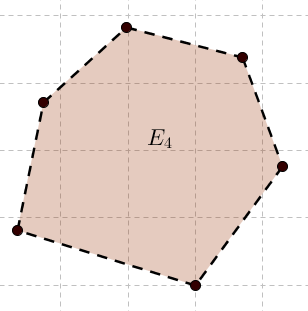}   \\
\end{tabular}
\end{center}
 \caption{A \texttt{GeoGebra} \cite{geogebra} snapshot.
Nearly convex but not convex sets}
\label{fig:nc:set}
\end{figure}

As a concrete example, we have

\begin{example}\label{ncpolygon}
Suppose that $f:\RR^2\to\RX$ is defined as
$f:=\sum_{i \in I}f_i$,
where $I:=\{1, 2, 3, 4\}$ and $(\forall i\in I)$ $f_i$ are defined as
\begin{align}
f_{1}&:\RR^2\to\RR:(x,y)\mapsto \max\stb{\sqrt{2}-\sqrt{\tfrac{1}{\sqrt{2}}(2+x-y)},\abs{\tfrac{1}{\sqrt{2}}(2+x+y)}},\\
f_{2}&:\RR^2\to\RR:(x,y)\mapsto  \max\stb{3-\sqrt{1+y},\abs{x}},\\
f_{3}&:\RR^2\to\RR:(x,y)\mapsto  \max\stb{1-\sqrt{1-y},\abs{x}} ,\\
f_{4}&:\RR^2\to\RR:(x,y)\mapsto  \max\stb{\sqrt{2}-\sqrt{\tfrac{1}{\sqrt{2}}(2-x-y)},\abs{\tfrac{1}{\sqrt{2}}(-2+x-y)}}.
\end{align}
Then $f$ is a proper, convex, lower semicontinuous function and
\begin{gather*}
\dom \pt f=\ran (\pt f)^{-1}=\ran \pt f^*\\
=\stb{(x,y)~|~\abs{x}<3,\abs{y}<1,2+x-y> 0,2-x-y>0}\cup\stb{(1,1),(-1,1),(3,-1),(-3,-1)},
\end{gather*}
as shown in Figure~\ref{fig:ncpolygon}.
\end{example}
\begin{figure}[h]
  \centering
  \begin{subfigure}[t]{0.49\textwidth}
    \centering
    \includegraphics[scale=0.5]{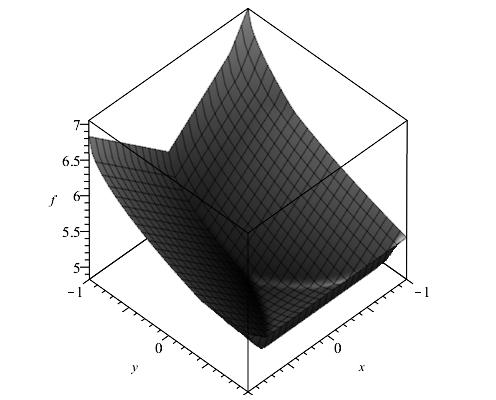}
    \caption{$f(\xi_1,\xi_2)$}
  \end{subfigure}
  \begin{subfigure}[t]{0.49\textwidth}
    \centering
    \includegraphics[scale=0.5]{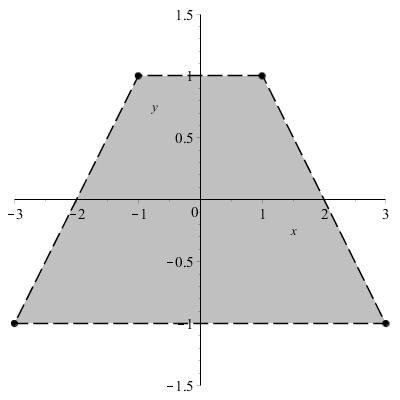}
    \caption{$\dom \pt f$}
  \end{subfigure}
  \caption{The function $f$ and $\dom \pt f$ of Example~\ref{ncpolygon}.}
  \label{fig:ncpolygon}
\end{figure}

\begin{proof}
Let
$$g_\alpha:\RR^2\to \RR:(x,y)\mapsto \begin{cases} \max\stb{\alpha-\sqrt{x},\abs{y}}, &\text{ if }x\ge 0;\\ +\infty, & \text{otherwise,}\end{cases}$$
and
$$R_{\theta} = \begin{pmatrix} \cos \theta & -\sin \theta\\ \sin \theta & \cos \theta\end{pmatrix}.$$
 Then $(\forall (x,y)\in \RR^2)$ we have,
\begin{align*}
f_1(x,y)&=g_{\sqrt{2}}(R_{\pi/4}((x,y)-(-2,0))),\\
f_2(x,y)&=g_3(R_{-\pi/2}((x,y)-(0,-1))),\\
f_3(x,y)&=g_1(R_{\pi/2}((x,y)-(0,1))),\\
f_4(x,y)&=g_{\sqrt{2}}(R_{5\pi/4}((x,y)-(2,0))).
\end{align*}
As $\pt f=\sum_{i \in I} \pt f_i$,
 using Example~\ref{Rock:ex:gen}, and particularly \eqref{domdf},
we see that
 \begin{gather*}
\dom \pt f=\cap_{i \in I} \dom \pt f_i\\
=\stb{(x,y)~|~\abs{x}<3,\abs{y}<1,2+x-y> 0,2-x-y>0}\cup\stb{(1,1),(-1,1),(3,-1),(-3,-1)}.
\end{gather*}
Using \cite[Proposition~16.24]{BC2011} we have $(\pt f)^{-1}=\pt f^*$,
which completes the proof.
\end{proof}

We finish this section by remarking that each set in Figure~\ref{fig:c:set} is a subdifferential domain
of a proper lower semicontinuous convex function. For the first set, use
$f:=\iota_{B(0,1)}+g_{1}+g_{2}$  where each $g_{i}$ has $\dom \partial g_{i}$ being an open convex set whose boundary consists
of one dotted line part of the unit circle and two dotted rays tangent to the circle.
For the second set, use $f:=\iota_{B(0,1)}+g_{1}+g_{2}$ where each $g_{i}$ is obtained by rotation and translation
of Rockafellar's function, and $\dom \partial g_{i}$ is a closed half space with an open line segment on its boundary removed.

\section{Discussions and open problems}\label{s:openproblems}
In this paper we systematically study nearly convex sets: criteria for near convexity; topological properties such as
relative interior, interior, recession cone
of nearly convex sets; formulas for the relative interiors and closures of nearly convex sets, which are linear
image or inverse image of other nearly convex sets. Rockafellar provided the first convex function whose subdifferential
domain is not convex. To build more examples, we compute the subdifferential and the Fenchel
conjugate of an modified Rockafellar's function.
It turns out every polyhedral set in $\RR^2$ with edges removed but keeping its vertices
is a domain of the subdifferential mapping of a proper lower semicontinuous convex function.

Although we have constructed some proper lower semicontinuous convex functions whose subdifferential
mappings have prescribed domain or ranges, the general problem is still unsolved.
Let us note that
\begin{theorem} Let $C\subseteq \RR^n$ (not necessarily convex).
Then there exists a monotone operator (not necessarily maximal monotone)
 $A:\RR^n\To \RR^n$ such that
$\ran A=C$.
\end{theorem}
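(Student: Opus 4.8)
The plan is to observe that the statement asks for nothing beyond the set-theoretic equality $\ran A=C$ — no maximality, no convexity or near convexity of $\dom A$ — so the crudest possible construction already suffices, and the whole content is a one-line monotonicity check.

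First I would dispose of the degenerate case: if $C=\varnothing$, take $A$ to be the operator with $\gra A=\varnothing$, i.e.\ $Ax=\varnothing$ for every $x\in\RR^n$. This is vacuously monotone and has $\ran A=\varnothing=C$. For $C\neq\varnothing$, the idea is to pile the entire graph over a single point of the domain so that the monotonicity inequality becomes trivial. Concretely, fix any $x_0\in\RR^n$ (the origin will do) and define $A:\RR^n\To\RR^n$ by $Ax_0:=C$ and $Ax:=\varnothing$ for $x\neq x_0$; equivalently $\gra A=\stb{x_0}\times C$. Then $\ran A=\bigcup_{x\in\RR^n}Ax=Ax_0=C$, which is the asserted conclusion about the range.

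It remains to verify monotonicity. Let $(x,u)\in\gra A$ and $(y,v)\in\gra A$. By construction both first coordinates equal $x_0$, so $x-y=0$ and hence $\scal{x-y}{u-v}=\scal{0}{u-v}=0\ge 0$. Thus the monotonicity inequality holds (with equality) for every pair of points in $\gra A$, so $A$ is monotone, and the proof is complete.

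There is essentially no obstacle here; the only point worth flagging is the contrast with Fact~\ref{D-R-nc}: this degenerate example shows that \emph{maximality} is genuinely needed for $\ran A$ to be nearly convex, since the $A$ constructed above has range equal to an arbitrary (possibly wildly nonconvex) set $C$. If one wanted additional features — for instance $\dom A$ dense, or $A$ a gradient mapping — a different and much more delicate construction would be required, but nothing of that sort is claimed in the statement.
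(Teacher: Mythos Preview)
Your proof is correct, but it takes a different route from the paper's. The paper constructs $A:=P_C$, the (set-valued) metric projection onto the arbitrary set $C$, and argues monotonicity by observing that $\gra P_C\subseteq \gra P_{\closu{C}}$ and invoking the known fact (\cite[Proposition~12.19]{Rock98}) that the projection onto any nonempty closed set in $\RR^n$ is monotone; the range identity $\ran P_C=C$ is immediate since $c\in P_C(c)$ for every $c\in C$. Your construction instead stacks all of $C$ over a single base point, so monotonicity is vacuous and no external fact is needed --- a strictly more elementary argument. What the paper's choice buys is a less degenerate witness: $P_C$ has $\dom P_C\supseteq C$ (and $\dom P_C=\RR^n$ when $C$ is closed), is single-valued almost everywhere, and is the restriction of a maximally monotone operator when $C$ is closed convex, which ties in with the remark immediately following the theorem. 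Your construction, by contrast, makes the point about maximality versus mere monotonicity just as sharply while requiring nothing beyond the definition.
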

\begin{proof} Consider the projection operator $P_{C}:\RR^n\To\RR^n$. Then $P_{C}$ is
monotone because $\gra P_{C}\subseteq \gra P_{\closu{C}}$, and $P_{\closu{C}}$ is monotone by \cite[Proposition 12.19]{Rock98}.
Clearly $\ran P_{C}=C$.
\end{proof}

According to \cite[Theorem 12.20]{Rock98}, a closed set $C\subseteq\RR^n$ is convex if and only if
$P_{C}$ is maximally monotone. Therefore, it is the maximality to force $C$ having more structural
properties.

We finish the paper with three open questions:
\begin{enumerate}
\item Is every convex set a domain or range of a subdifferential mapping of a proper lower semicontinuous convex function (or
a maximally monotone operator) in $\RR^n$ with $n\geq 2$?
\item Is every nearly convex set a domain or range of a subdifferential mapping of a proper lower semicontinuous convex function (or a maximally monotone operator) in $\RR^n$ with $n\geq 2$?
\item What is the intrinsic difference between the ranges of subdifferentials of proper lower semicontinuous convex functions
and the ranges of maximally monotone operators?
\end{enumerate}

\section*{Acknowledgments}
Sarah Moffat was partially supported by the Natural Sciences and Engineering Research
Council of Canada. Walaa Moursi was supported by NSERC grants of Drs. Heinz Bauschke and Warren Hare.
Xianfu Wang was partially supported by the Natural Sciences and
Engineering Research Council of Canada.

\section*{Appendix A}\label{a:missingproofs}
We shall need two facts. The first one is a structural
characterization of $\partial f$ when a convex function $f$ is lower
semicontinuous, and $\nabla f$ is not empty. (See also \cite[Theorem 12.67]{Rock98}
for the structure of maximally monotone operators.) The second one concerns the domain of the Fenchel
conjugate of convex functions.

\begin{fact}\emph{(\cite[Theorem 25.6]{Rock70})}\label{f:subcharacter}
Let $f$ be a closed proper convex function such that $\dom f$ has a non-empty interior.
Then
$$\partial f(x)=\closu(\conv S(x))+K(x), \quad \forall x,$$
where
$K(x)$ is the normal cone to $\dom f$ at $x$ (empty if $x\not\in\dom f$)
and $S(x)$ is the set of all limits of sequence of the form
$\nabla f(x_{1}), \nabla f(x_{2}), \ldots,$ such that
$f$ is differentiable at $x_{i}$, and $x_{i}$ tends to $x$.
\end{fact}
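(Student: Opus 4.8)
The statement is Rockafellar's Theorem~25.6, and the plan is to reconstruct it from Rademacher's theorem (a finite convex function on an open set is differentiable almost everywhere) together with the support-function calculus for $\partial f$. First I would dispose of the trivial case: if $x\notin\dom f$ then $\partial f(x)=\varnothing$ and $K(x)=\varnothing$, so both sides are empty; hence from now on assume $x\in\dom f$.

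The inclusion $\closu(\conv S(x))+K(x)\subseteq\partial f(x)$ is the easy half. Each pair $(x_i,\nabla f(x_i))$ belongs to $\gra\partial f$, and since $\partial f$ is maximally monotone its graph is closed (directly: pass to the limit in the subgradient inequality using lower semicontinuity of $f$), so every $s\in S(x)$ lies in $\partial f(x)$; as $\partial f(x)$ is closed and convex, $\closu(\conv S(x))\subseteq\partial f(x)$. Moreover $\partial f(x)+K(x)\subseteq\partial f(x)$: for $v\in\partial f(x)$ and $w\in N_{\dom f}(x)$ the subgradient inequality at points of $\dom f$ only strengthens when $v$ is replaced by $v+w$, while at points outside $\dom f$ it is vacuous. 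Combining the two gives the inclusion.

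For the reverse inclusion I would first treat $x\in\inte\dom f$, where $K(x)=\{0\}$ and the claim reduces to $\partial f(x)=\conv S(x)$. Here $\partial f(x)$ is compact, $f$ is Lipschitz near $x$ so $S(x)$ is compact, and hence $\conv S(x)$ is compact. If some $v\in\partial f(x)$ failed to lie in $\conv S(x)$, a separation would give a direction $d$ with $\scal v d>\max_{s\in S(x)}\scal s d$; but $\scal v d\le\sigma_{\partial f(x)}(d)=f'(x;d)$, so it suffices to show $f'(x;d)\le\max_{s\in S(x)}\scal s d$. The hard part is exactly this: using Rademacher's theorem and a density/covering argument along the half-line $t\mapsto x+td$, one produces differentiability points $y_i\to x$ with $\scal{\nabla f(y_i)}{d}\to f'(x;d)$, and local boundedness of the gradients lets one pass to a subsequence with $\nabla f(y_i)\to s\in S(x)$, whence $\scal s d=f'(x;d)$ — contradicting the separation. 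This proves $\partial f(x)=\conv S(x)$ on $\inte\dom f$.

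Finally, for $x\in\dom f\setminus\inte\dom f$ I would pick $z\in\inte\dom f$ (so $]x,z]\subseteq\inte\dom f$) and compare $\partial f(x)$ with the subdifferentials at $x_i:=x+\varepsilon_i(z-x)\to x$. Passing to support functions, for a direction $d$ that is feasible at $x$ (that is, $x+td\in\dom f$ for small $t>0$) one checks $f'(x;d)=\lim_i f'(x_i;d)$, whereas for an infeasible $d$ both $f'(x;d)$ and $\sigma_{K(x)}(d)$ equal $+\infty$; assembling these shows that $\sigma_{\partial f(x)}$ coincides with the support function of $\closu\big(\conv(\limsup_{y\to x,\,y\in\inte\dom f}\partial f(y))\big)+K(x)$. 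Invoking the interior case $\partial f(y)=\conv S(y)$ and the outer semicontinuity of $y\mapsto S(y)$ then identifies this set as $\closu(\conv S(x))+K(x)$, which completes the proof. The two genuine obstacles are the measure-theoretic lemma in the interior step and the careful bookkeeping of the normal-cone contribution in the boundary step.
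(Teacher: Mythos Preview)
The paper does not prove this statement at all: it is stated as a \emph{Fact} with a citation to \cite[Theorem~25.6]{Rock70} and used as a black box in the appendix computations, so there is no ``paper's own proof'' to compare against.

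Your outline follows Rockafellar's original argument in structure --- easy inclusion via closedness of $\gra\partial f$ and convexity/recession of $\partial f(x)$, then support-function comparison on $\inte\dom f$, then a boundary reduction --- and you correctly flag the two genuine difficulties. One caveat on the interior step: your phrase ``density/covering argument along the half-line $t\mapsto x+td$'' undersells the issue, since the set of differentiability points has full \emph{measure} but need not meet any particular line segment; the actual argument (as in Rockafellar) requires approaching $x$ through differentiability points \emph{near} the ray and using the joint continuity of $(y,d)\mapsto f'(y;d)$ at points of differentiability, or equivalently the upper semicontinuity of $y\mapsto\partial f(y)$ on the interior. Similarly, in the boundary step the identity $f'(x;d)=\lim_i f'(x_i;d)$ for feasible $d$ is not automatic from convexity alone and needs the closedness of $f$ plus a careful one-sided argument (Rockafellar handles this via Theorem~23.2 and Corollary~7.5.1). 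These are fillable, and you have identified them as the obstacles, so as a proof \emph{plan} this is sound; as a proof it is still a sketch at exactly the two points you name.
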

\begin{lemma}\label{l:conj:dom}
Assume that $f:\RR^n\rightarrow\RX$ is proper lower semicontinuous convex function.
If $\ran\partial f$ is closed, then
$\dom f^*=\ran\partial f$.
\end{lemma}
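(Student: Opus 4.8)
The plan is to sandwich $\dom f^{*}$ between $\ran\partial f$ and its closure, and then to use the closedness hypothesis to collapse the sandwich. Throughout, recall that $f^{*}$ is again a proper lower semicontinuous convex function, so $\dom f^{*}\neq\varnothing$ and the statement is not vacuous. First I would isolate the two inclusions that hold with no extra hypothesis. Since $\partial f^{*}=(\partial f)^{-1}$ we have $\ran\partial f=\dom\partial f^{*}$, and since $\partial g(x)=\varnothing$ whenever $g(x)=+\infty$ we have $\dom\partial f^{*}\subseteq\dom f^{*}$; hence $\ran\partial f=\dom\partial f^{*}\subseteq\dom f^{*}$, and the whole content of the lemma is the reverse inclusion.

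Next I would bring in Rockafellar's classical fact (\cite[Theorem~23.4]{Rock70}, the very result invoked in the introduction to the effect that a subdifferential domain is nearly convex), applied to the proper lower semicontinuous convex function $g:=f^{*}$: namely $\ri(\dom f^{*})\subseteq\dom\partial f^{*}$. Because $\dom f^{*}$ is convex, Fact~\ref{f:ri}\ref{f:ri:i} gives $\closu{(\ri\dom f^{*})}=\closu{\dom f^{*}}$. Taking closures in the chain $\ri(\dom f^{*})\subseteq\dom\partial f^{*}\subseteq\dom f^{*}$ then forces $\closu{\dom\partial f^{*}}=\closu{\dom f^{*}}$; in other words $\dom\partial f^{*}$ and $\dom f^{*}$ are nearly equal. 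Now invoking the hypothesis that $\ran\partial f=\dom\partial f^{*}$ is closed,
\[
\dom f^{*}\subseteq\closu{\dom f^{*}}=\closu{\dom\partial f^{*}}=\dom\partial f^{*}=\ran\partial f,
\]
which together with $\ran\partial f\subseteq\dom f^{*}$ yields $\dom f^{*}=\ran\partial f$, as desired.

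I do not anticipate a genuine obstacle: once the right inputs are assembled the argument is a two-line inclusion chase. The only points needing care are (a) citing correctly that $\ri(\dom f^{*})\subseteq\dom\partial f^{*}$ — this is precisely where lower semicontinuity enters, through the properness of $f^{*}$, and it is the same near-convexity phenomenon underlying Fact~\ref{D-R-nc}; and (b) using the convexity of $\dom f^{*}$ to identify $\closu{(\ri\dom f^{*})}$ with $\closu{\dom f^{*}}$ via Fact~\ref{f:ri}\ref{f:ri:i}, since for a non-convex set these two closures can differ and the sandwich would break.
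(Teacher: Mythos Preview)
Your proof is correct and follows essentially the same sandwich argument as the paper: both establish $\ran\partial f\subseteq\dom f^{*}\subseteq\closu{(\ran\partial f)}$ and then invoke the closedness hypothesis. The only difference is the citation for the second inclusion---the paper appeals directly to the Br{\o}nsted--Rockafellar theorem, whereas you derive it from $\ri(\dom f^{*})\subseteq\dom\partial f^{*}$ (\cite[Theorem~23.4]{Rock70}) together with Fact~\ref{f:ri}\ref{f:ri:i}; in $\RR^n$ these are equivalent routes to the same inclusion.
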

\begin{proof} As $\ran \partial f=\dom \partial f^*$, by the Bronsted-Rockafellar's theorem
\cite[Theorem 3.17]{phelps} or \cite[Proposition 16.28]{BC2011}, we obtain
$\ran \partial f \subseteq \dom f^*\subseteq \closu(\ran\partial f).$
Therefore, the result holds.
\end{proof}

\noindent{\bf I. Proof of Example~\ref{Rock:ex:gen}}

\begin{proof}
First, we calculate $\partial f$.
We argue by cases:
\begin{enumerate}
\item\label{i:differentiable}
$\alpha-\sqrt{\xi_1}>\abs{\xi_2}$, $0<\xi_1<\alpha^2$: $\nabla f(\xi_1,\xi_2)=\stb{(-\tfrac{1}{2}{\xi_1}^{-1/2},0)}$
because $f(\xi_1,\xi_2)=\alpha -{\xi_1}^{1/2}$.

\item\label{i:insideregion} $\xi_1 =0$, $\abs{\xi_2}<\alpha$: $\pt f(0,\xi_2)=\fady$.
Indeed, by \ref{i:differentiable},
$\lim_{(x_{1},x_{2})\rightarrow (0,\xi_{2})}\nabla f(x_{1},x_{2})=\lim_{(x_{1},x_{2})\rightarrow (0,\xi_{2})}(-\tfrac{1}{2}{x_1}^{-1/2},0)$
does not exists. Apply Fact~\ref{f:subcharacter}.

\item\label{i:nonempty:above} $\xi_1=0$, $\xi_2\ge \alpha$: $\pt f(0,\xi_2) = \RR_{-}\times\stb{1}$.
Note that $N_{\dom f}(0,\xi_{2})=\RR_{-}\times\{0\}$. When $x_2> \alpha-\sqrt{x_1}$, $\alpha^2>x_1>0$,
we have $f(x_{1},x_{2})=x_{2}$, so  $$\lim_{(x_{1},x_{2})\rightarrow (0,\xi_{2})}
\nabla f(x_{1},x_{2})=\{(0,1)\};$$
When $\alpha-\sqrt{x_{1}}>x_2$, $0<x_1<\alpha^2$, $x_{2}>0$,
we have $f(x_1,x_2)=\alpha -{x_1}^{1/2}$, so
$$\lim_{(x_{1},x_{2})\rightarrow (0,\xi_{2})}\nabla f(x_1,x_2)
=\lim_{(x_{1},x_{2})\rightarrow (0,\xi_{2})}(-\tfrac{1}{2}{x_1}^{-1/2},0)$$
does not exist.
Apply Fact~\ref{f:subcharacter} to obtain $\partial f(0,\xi_{2})$.

\item $\xi_1=0$, $\xi_2\le -\alpha$: $\pt f(0,\xi_2) = \RR_{-}\times\stb{-1}$. The arguments
are similar to \ref{i:nonempty:above}.

\item  $\xi_2> \alpha-\sqrt{\xi_1}$, $\alpha^2>\xi_1>0$: $\partial f(\xi_{1},\xi_{2})=\{(0,1)\}$
because $f(\xi_{1},\xi_{2})=\xi_{2}$.

\item $\xi_2= \alpha-\sqrt{\xi_1}$, $\alpha^2>\xi_1>0$:
Notice that $f(\xi_1,\xi_2)=\max\stb{f_1(\xi_1,\xi_2),f_2(\xi_1,\xi_2)}$, where
$f_1(\xi_1,\xi_2)=\alpha-\sqrt{\xi_1}$ and $f_2(\xi_1,\xi_2)=\xi_2$.
When $\abs{\xi_2}= \alpha-\sqrt{\xi_1}$, $\xi_1>0$, and $\xi_2>0$, we have
 $f_1(\xi_1,\xi_2)=f_2(\xi_1,\xi_2)$, hence
it follows from \cite[Theorem~10.31]{Rock98} that
$$\pt{f(\xi_1,\xi_2)}=\conv\stb{\grad f_1(\xi_1,\xi_2),\grad f_1(\xi_1,\xi_2)} =\conv\stb{(-\tfrac{1}{2}{\xi_1}^{-\tfrac{1}{2}},0),(0,1)}.$$

\item $-\xi_2= \alpha-\sqrt{\xi_1}$, $\alpha^2>\xi_1>0$:
The proof is similar to the previous case with
$f(\xi_1,\xi_2)=\max\stb{f_1(\xi_1,\xi_2),f_2(\xi_1,\xi_2)}$, where
$f_1(\xi_1,\xi_2)=\alpha-\sqrt{\xi_1},$ and $f_2(\xi_1,\xi_2)=-\xi_2$, thus
$$\pt{f(\xi_1,\xi_2)}=\conv\stb{(-\tfrac{1}{2}{\xi_1}^{-{1}/{2}},0),(0,-1)}.$$

\item $\xi_1=\alpha^2$, $\xi_2=0$:
We have
$f(\xi_1,\xi_2)=\max\stb{f_1(\xi_1,\xi_2),f_2(\xi_1,\xi_2),f_3(\xi_1,\xi_2)}$, where
$f_1(\xi_1,\xi_2)=\alpha-\sqrt{\xi_1},$ $f_2(\xi_1,\xi_2)=\xi_2$ and $f_3(\xi_1,\xi_2)=-\xi_2$, thus
$$\pt{f(\xi_1,\xi_2)}=\conv\stb{(-\tfrac{1}{2\alpha},0),(0,1),(0,-1)}.$$

\item
When $\xi_{1}>\alpha^2$, $f(\xi_1,\xi_2)=\abs{\xi_2}=\max\stb{\xi_2,-\xi_2}$.
If $\xi_{2}>0$, then
$f(\xi_1,\xi_2)=\xi_2$, so
$\pt f(\xi_1,\xi_2)=\stb{(0,1)}$.
If $\xi_2<0$, then
$f(\xi_1,\xi_2)=-\xi_2$, so
$\pt f(\xi_1,\xi_2)=\stb{(0,-1)}$. If $\xi_{2}=0$,  then $\partial f(\xi_{1},0)=\conv\{(0,1),(0,-1)\}
=\{0\} \times [-1,1]$.
\end{enumerate}

Next, we calculate  the  Fenchel conjugate of $f$.

In view of \eqref{randf}, $\ran \partial f$ is closed,
so  $\dom f^*=\ran \partial f$ by Lemma~\ref{l:conj:dom}.
Recall that
\begin{equation}\label{e:FY:dual}
f^*(\xso,\xst)=x_1\xso+x_2\xst-f(x_1,x_2)
\end{equation}
We proceed by cases using \eqref{eq:sub:cases}.
\begin{enumerate}[(i)]
\item \label{i:conj:one}
$\xso\le0$ and $\xst=1$:
In this case $x_1=0$ and $\abs{x_2}=x_2\ge \alpha$,
hence $f(x_1,x_2)=x_2$.
Therefore \eqref{e:FY:dual} implies that
$f^*(\xso,\xst)=x_2-x_2=0$.

\item
$\xso\le0$ and $\xst=-1$:
In this case $x_1=0$ and $\abs{x_2}=-x_2\ge \alpha$,
hence $f(x_1,x_2)=-x_2$.
Therefore \eqref{e:FY:dual} implies that
$f^*(\xso,\xst)=-x_2+x_2=0$.

\item
$-1/(2\alpha)\leq \xso\leq 0$ and $|\xst|\leq 1+2\alpha \xso$:
 In this case, this is exactly the region given by the set
$\conv\stb{(-\tfrac{1}{2\alpha},0),(0,1),(0,-1)}=\partial f(\alpha^2,0)$.
Hence, by \eqref{eq:sub:cases} we have
$(\xso,\xst)\in \partial f(\alpha^2,0)$ so that
$x_1=\alpha^2$, $x_2=0$ and
$f(x_1,x_2)=0$. Therefore,
\begin{align}
f^*(\xso,\xst)&=x_1\xso+x_2\xst-f(x_1,x_2)\nonumber\\
&=\alpha^2\cdot \xso+0\cdot\xst-0=\alpha^2\xso.
\end{align}

\item $\xso< 0$ and $\max\{0, 1+2\alpha \xso\}\le\xst< 1$:
In this case, this is the region given by
$$\bigcup\left\{\conv\{(-1/2 x_{1}^{-1/2},0), (0,1)\}: 0<x_{1}<\alpha^2, x_{2}=\alpha-x_{1}^{1/2}\right\}\setminus
\{(0,1)\}.$$
 Then  each $(\xso,\xst)\in
\conv\stb{(-\tfrac{1}{2\sqrt{x_1}},0),(0,1)}$ for some $(x_{1},x_{2})$ satisfying
$0<x_{1}<\alpha^2, x_{2}=\alpha-x_{1}^{1/2}$.
Thus, there exists $\lam\in \left]0,1\right]$ such that
$\xso=-\tfrac{\lam}{2\sqrt{x_1}}$
 and $\xst=1-\lam$. Therefore we have
 $\tfrac{1}{\sqrt{x_1}}=-\tfrac{2}{\lam}\xso
 =-\tfrac{2}{1-\xst}\xso$,
 $\sqrt{x_1}=-\tfrac{1-\xst}{2\xso}$,
  $x_2=\alpha-\sqrt{x_1}=\alpha+\tfrac{1-\xst}{2\xso}$,
and $f(x_1,x_2)=\alpha-\sqrt{x_1}=x_2$.
Now  \eqref{e:FY:dual} implies that
\begin{align}
f^*(\xso,\xst)&=x_1\xso+x_2\xst-f(x_1,x_2)\nonumber\\
&=\tfrac{(1-\xst)^2}{4{\xso}^2}\xso
+(\alpha+\tfrac{1-\xst}{2\xso})\xst
-(\alpha+\tfrac{1-\xst}{2\xso})\nonumber\\
&=\tfrac{(1-\xst)^2}{4{\xso}}
+(\alpha+\tfrac{1-\xst}{2\xso})(\xst-1)
=\tfrac{(1-\xst)^2}{4{\xso}}-\tfrac{(1-\xst)^2}{2{\xso}}
-\alpha(1-\xst)
\nonumber\\
&=-\tfrac{(1-\xst)^2}{4{\xso}}
-\alpha(1-\xst).
\nonumber
\end{align}

\item\label{i:conj:five} $\xso< 0$ and $-1<\xst \leq \min\{0, -(1+2\alpha \xso)\}$:
In this case, this is the region given by
$$\bigcup\left\{\conv\{(-1/2 x_{1}^{-1/2},0), (0,-1)\}: 0<x_{1}<\alpha^2, -x_{2}=\alpha-x_{1}^{1/2}\right\}
\setminus\{(0,-1)\}.$$
Then each
$(\xso,\xst)\in
\conv\stb{(-\tfrac{1}{2\sqrt{x_1}},0),(0,-1)}$ for some $(x_{1},x_{2})$ satisfying
$0<x_{1}<\alpha^2, -x_{2}=\alpha-x_{1}^{1/2}$.
As before,
let $\lam\in \left]0,1\right]$.
Then $\xso=-\tfrac{\lam}{2\sqrt{x_1}}$
 and $\xst=-(1-\lam)=\lam-1$. Therefore we have
 $\tfrac{1}{\sqrt{x_1}}=-\tfrac{2}{\lam}\xso
 =-\tfrac{2}{1+\xst}\xso$,
 $\sqrt{x_1}=-\tfrac{1+\xst}{2\xso}$,
  $x_2=-(\alpha-\sqrt{x_1})=-\alpha-\tfrac{1+\xst}{2\xso}$,
and $f(x_1,x_2)=\alpha-\sqrt{x_1}=-x_2$.
Now  \eqref{e:FY:dual} implies that
\begin{align}
f^*(\xso,\xst)&=x_1\xso+x_2\xst-f(x_1,x_2)\nonumber\\
&=\tfrac{(1+\xst)^2}{4{\xso}^2}\xso
-(\alpha+\tfrac{1+\xst}{2\xso})\xst
-(\alpha+\tfrac{1+\xst}{2\xso})\nonumber\\
&=\tfrac{(1+\xst)^2}{4{\xso}}
-(\alpha+\tfrac{1+\xst}{2\xso})(\xst+1)
=\tfrac{(1+\xst)^2}{4{\xso}}-\tfrac{(1+\xst)^2}{2{\xso}}
-\alpha(1+\xst)
\nonumber\\
&=-\tfrac{(1+\xst)^2}{4{\xso}}
-\alpha(1+\xst)=-\tfrac{(1-\abs{\xst})^2}{4{\xso}}
-\alpha(1-\abs{\xst}).
\end{align}
\eqref{i:conj:one}-\eqref{i:conj:five} together finish the computation of $f^*$.
\end{enumerate}
Altogether,  the proof is complete.
\end{proof}

\noindent{\bf II. Proof of Example~\ref{e:notclosed}}

\begin{proof} We argue by cases.

Case 1: $x_{2}< \alpha-\sqrt{x_{1}}$ and $x_{1}\geq 0$.
We have $f(x_{1},x_{2}):=\alpha-\sqrt{x_{1}}$.
When $x_{1}>0$,
$f(x_{1},x_{2})=\alpha-\sqrt{x_{1}}$
so $\partial f(x_{1},x_{2})=(-1/2 x_{1}^{-1/2},0)$;
When $x_{1}=0$ and $x_{2}<\alpha$, $f(0,x_{2})=\alpha$, $f(x_{1},x_2)=\alpha-\sqrt{x_{1}}$ when $x_{1}>0$, so
$\partial f(0,x_{2})=\varnothing$.

Case 2: $x_{2}>\alpha-\sqrt{x_{1}}$ and $x_{1}\geq 0$. When
$x_{1}>0$, $f(x_{1},x_{2})=x_{2}$,
$\partial f(x_{1},x_{2})=\{(0,1)\}$; When $x_1=0$,
$\partial f(0,x_{2})= (0,1)+\RR_{-}\times\{0\}$.

Case 3: $x_{2}=\alpha-\sqrt{x_{1}}$. When $x_{1}>0$,
$\partial f(x_{1},x_{2})=\conv\{(0,1), (-1/2 x_{1}^{-1/2},0)\}$;
When $x_{1}=0, x_{2}=\alpha$, $\partial f(0,\alpha)=(0,1)+\RR_{-}\times\{0\}$.
\end{proof}


\begin{thebibliography}{99}
\bibitem{bartz07}
S. Bartz, H.H. Bauschke, J.M. Borwein, S. Reich, and X. Wang, ``
Fitzpatrick functions, cyclic monotonicity and Rockafellar's antiderivative,"
\emph{Nonlinear Anal.} 66, pp.~1198--1223, 2007.

\bibitem{bbw2007}
H.H. Bauschke, J.M. Borwein and X. Wang,
``Fitzpatrick functions and continuous linear monotone operators," \emph{SIAM J. Optim.}~18,
pp.~789--809, 2007.

\bibitem{bwy2012} H.H.\ Bauschke, X.\ Wang and L.\ Yao,
``On paramonotone monotone operators and
rectangular monotone operators," \emph{Optimization}, pp.~1--18, 2012.

\bibitem{BC2011}
H.H.\ Bauschke and P.L.\ Combettes,
\emph{Convex Analysis and Monotone Operator Theory in Hilbert Spaces},
Springer, 2011.

\bibitem{bmw12} H.H. Bauschke, S. Moffat and X. Wang,
``Firmly nonexpansive mappings and maximally monotone operators:
correspondence and duality," \emph{Set-Valued Var. Anal.}~20,  pp.~131--153, 2012.

\bibitem{bmw-nenc}
H.H.\ Bauschke, S.M.\ Moffat, and X.\ Wang,
``Near equality, near convexity, sums of maximally monotone operators,
and averages of firmly nonexpansive mappings,"
\emph{Mathematical Programming}~139, pp.~55--70, 2013.

\bibitem{bbmw15} Sedi Bartz, H.H.\ Bauschke, S.M.\ Moffat, and X.\ Wang,
``The resolvent average of monotone operators:
dominant and recessive properties,"
http://arxiv.org/pdf/1505.02718.pdf

\bibitem{Bonnans}
J.F.\ Bonnans and A.\ Shapiro, \emph{Perturbation Analysis of Optimization Problems},
Springer Series in Operations Research, Springer-Verlag, 2000.

\bibitem{lewis} J.M.\ Borwein and A.S.\ Lewis,
\emph{Convex Analysis and Nonlinear Optimization},
second edition, Springer, New York, 2006.

\bibitem{bzhu}
J.M.\ Borwein and Q.\ Zhu, \emph{Techniques of Variational Analysis},
 Springer-Verlag, New York, 2005.

\bibitem{Bot2006}
R.I.\ Bo\c{t}, S.M.\ Grad, and G. Wanka, ``Almost convex functions: conjugacy and duality," \emph{Lecture Notes in Economics and Mathematical Systems}~583, pp.~101--114, 2006.

\bibitem{Bot2007}
R.I.\ Bo\c{t}, S.M.\ Grad, and G. Wanka, ``Fenchel's duality theorem for nearly convex functions," \emph{J. Optim. Theory Appl.}~132, pp.~509--515, 2007.

\bibitem{Bot2008}
R.I.\ Bo\c{t}, G.\ Kassay, G.\ Wanka, ``Duality for almost convex optimization problema via the perturbation approach," \emph{J. Glob. Optim.}~42, pp.~385--399, 2008.


\bibitem{Br-H} H.\ Brezis, A.\ Haraux,
Image d'une Somme d'op\'{e}rateurs Monotones et Applications,
\emph{Israel J. Math.}~23, pp.~165--186, 1976.

\bibitem{Fabian} M. Fabian, P. Habala, P. H\'{a}jek, V.  Montesinos Santaluc\'{i}a, J. Pelant, and V.  Zizler,
\emph{Functional Analysis and Infinite-dimensional Geometry},
CMS Books in Mathematics/Ouvrages de Math\'{e}matiques de la SMC, 8. Springer-Verlag,
New York, 2001.

\bibitem{Frenk}
J.B.G.\ Frenk, G.\ Kassay, ``Lagrangian duality and cone convexlike functions," \emph{J. Optim. Theor. Appl.}~132(3), pp.~207--222, 2007.

\bibitem{geogebra} \texttt{http://www.geogebra.org/}.

\bibitem{Hadj}
N.\ Hadjisavvas, S. Koml\'{o}si, and S. Schaible, ``Handbook of generalized convexity and generalized monotonicity, nonconvex optimization and its applications," Springer-Verlag, New York, 2005.

\bibitem{John}
R.\ John, ``Uses of generalized convexity and generalized monotonicity in economics," In \cite{Hadj}, pp.~619--666.

\bibitem{maple} \texttt{http://www.maplesoft.com/}.

\bibitem{Martinez}
J.E.\ Martinez-Legaz, ``Generalized convex duality and its economic applications," In \cite{Hadj}, pp.~237--292.

\bibitem{minty}
G.J. Minty, ``On the maximal domain of a "monotone'' function," \emph{ Michigan Math. J.}~8, pp.~135--137, 1961.

\bibitem{phelps}
R.R.\ Phelps,
\emph{Convex Functions, Monotone Operators and
Differentiability},
2nd Edition, Springer-Verlag, 1993.

\bibitem{Rock70}
R.T. Rockafellar, \emph{Convex Analysis},
Princeton University Press, 1970.

\bibitem{Rockvirtual}
R.T. Rockafellar, ``On the virtual convexity of the domain and range of
a nonlinear maximal monotone operator," \emph{Math. Ann.}~185, pp.~81--90, 1970.

\bibitem{Rock98}
R.T.\ Rockafellar and R. J-B\ Wets,
\emph{Variational Analysis},
Springer-Verlag, 
corrected 3rd printing, 2009.

\bibitem{Si2}
S.\ Simons, \emph{From Hahn-Banach to Monotonicity},
Springer-Verlag, 2008.


\bibitem{Zalinescu}
{C.\ Z\u{a}linescu},
\emph{Convex Analysis in General Vector Spaces}, World Scientific
Publishing, 2002.

\end{thebibliography}
\end{document}